\newtheorem{definition}{Definition}
\newtheorem{proposition}{Proposition}
\newtheorem{assumption}{Assumption}
\newtheorem{lemma}{Lemma}
\newtheorem{theorem}{Theorem}
\newtheorem{corollary}{Corollary}
\newtheorem{remark}{Remark}
\newtheorem{example}{Example}
\newcommand{\bigO}{\mathcal{O}}
\newcommand{\dee}{\mathrm{d}}
\newcommand{\eps}{\varepsilon}
\renewcommand{\Pr}{\mathbb{P}}
\def\EE{\mathbb{E}}
\def\bR{\mathbb{R}}
\begin{document}

	\title{\bf\large Estimation and Inference of Time-Varying Auto-Covariance under Complex Trend: A Difference-based Approach
	\footnotetext{Email: cui@utstat.toronto.edu; mlevins@purdue.edu; zhou@utstat.toronto.edu.}}
	
\author[1]{Yan Cui} 
\author[2]{Michael Levine}
\author[1]{Zhou Zhou} 
\affil[1]{\small Department of Statistical Sciences, University of Toronto, Canada}
\affil[2]{\small Department of Statistics, Purdue University, USA}
\maketitle

\begin{abstract}
We propose a difference-based nonparametric methodology for the estimation and inference of the time-varying auto-covariance functions of a locally stationary time series when it is contaminated by a complex trend with both abrupt and smooth changes. Simultaneous confidence bands (SCB) with asymptotically correct coverage probabilities are constructed for the auto-covariance functions under complex trend. A simulation-assisted bootstrapping method is proposed for the practical construction of the SCB. Detailed simulation and a real data example round out our presentation. \\
\textit{Key words}: Change points; Gaussian approximation; Local stationarity; Simultaneous confidence bands.
\end{abstract}

\section{Introduction}

Our discussion begins with a heteroscedastic nonparametric regression model 
\begin{equation}\label{model1}
Y_{i}=\mu(t_{i})+\sqrt{V(t_{i})}\eps_{i}\,, i=1,\ldots,n,
\end{equation}
where $Y_{i}$ are the observations, $\mu$ is an unknown mean function, $t_{i}=i/n$ are the design  points, $i=1,\ldots,n,~\eps_{i}$ are the errors with mean zero and variance $1,$ and $V$ is the variance function. Historically, it has been assumed that the errors $\eps_{i}$ are independent. Variance estimation in regression models with the unknown mean has traditionally been a rather important problem. Accurate variance estimation is required for the purpose of, for example, construction of confidence bands for the mean function, testing the goodness of fit of a model, and also in order to choose the amount of smoothing needed to estimate the mean function; see e.g. \cite{rice1984bandwidth}, \cite{eubank1990testing}, \cite{gasser1991flexible}, and \cite{hardle1997local}.  An extensive survey of the difference sequence approach to estimate the variance in the nonparametric regression setting when the variance function is only a constant can be found in \cite{dai2017choice}. 

The situation when the variance is not constant is more complicated. One of the first attempts to estimate the variance function in a regression model was made in \cite{muller1987estimation} who proposed the basic idea of kernel smoothing of squared differences of observations. This idea has been further developed in \cite{muller1993variance}. \cite{brown2007variance} introduced a class of difference-based local polynomial regression-based estimators of the variance function $V(x)$ and obtained optimal convergence rates for this class of estimators that are uniform over broad functional classes. \cite{wang2008effect} obtained the minimax rate of convergence for estimators of the variance function in the model \eqref{model1} and characterized the effect of not knowing the mean function on the estimation of variance function in detail. Similar approach was used to construct a class of difference-based estimators in \cite{cai2009variance} when the covariate $X\in \bR^{d}$ for $d>1.$

All of the above mentioned papers only considered the case where the data are independent. However, difference-based methods have also been used to estimate variance and/or autocovariance in nonparametric regression where the errors are generated by a stationary process. The pioneering approach here was probably that of \cite{hans1988detecting} who proposed estimators based on the first-order differences to estimate (invertible) linear transformations of the variance-covariance matrix of stationary $m$-dependent errors. Here, by $m$-dependent errors we mean the errors generated by a stationary process whose autocovariance is equal to zero for any lag greater than some $m>0.$  \cite{herrmann1992choice} suggested second order differences to estimate the zero frequency of the spectral density of stationary processes with short-range dependence. In the case of autoregressive errors,  \cite{hall2003using} proposed $\sqrt{n}$-root consistent and, under the assumption of normality of errors, efficient estimators of the autocovariance that are also based on differences of observations. Under certain mixing conditions, \cite{park2006simple} proposed estimating the autocovariance function by applying difference-based estimators of the first order to the residuals of a kernel-based fit of the signal. \cite{zhou2015optimal} provided an optimal difference-based estimator of the variance for smooth nonparametric regression when the errors are correlated. Finally, the closest to us in spirit is, probably, \cite{tecuapetla2017autocovariance} that proposed a class of difference-based estimators for the auto-covariance in nonparametric regression when the signal is discontinuous and the errors form a stationary $m$-dependent sequence.  To the best of our knowledge, the problem of auto-covariance estimation in a nonparametric regression where the errors form a non-stationary sequence while the signal is discontinuous has not been considered before.  

The purpose of this article is to estimate and make inference of the time-varying covariance structure of a locally stationary time series when it is contaminated by a complex trend function with both smooth and abrupt changes. Here local stationarity refers to the slowly or smoothly evolving data generating mechanism of a temporal system (\cite{Dahlhaus1997}, \cite{Nason2000}, \cite{Zhou2009}). In time series analysis, the estimation and modelling of the auto-covariance structure is of fundamental importance in, for example, the optimal forecasting of the series (\cite{Brockwell2016}), the efficient estimation of time series regression models (\cite{Hamilton1994}) and the inference of time series regression parameters (\cite{Brockwell2016}). When the trend function is discontinuous, removing the trend from the time series and then estimating the auto-covariances from the residuals is not a good idea since it is very difficult to estimate the trend function near the points of discontinuity accurately. In this case, the aforementioned difference-based methods offer a good alternative. In this paper, we adopt a difference-based local linear regression method for the aforementioned time-varying auto-covariance estimation problem. The method can be viewed as a nonparametric and non-stationary extension to \cite{tecuapetla2017autocovariance}. It is shown that the uniform convergence rate of auto-covariance function estimation for the difference-based method under complex trend is the same as that of auto-covariance function estimation of a zero-mean time series when the number of points of discontinuity as well as the jump sizes diverge to infinity at a sufficiently slow rate. Therefore, asymptotically, the accuracy of auto-covariance function estimation will not be affected by the complex trend when the difference-based nonparametric method is used.

Making inference of the auto-covariance functions is an important task in practice as practitioners and researchers frequently test whether certain parametric or semi-parametric models are adequate to characterize the time series covariance structure. For instance, one may be interested in testing whether the auto-covariance functions are constant over time so that a weakly stationary time series model is sufficient to forecast the future observations. There is a rich statistical literature on the inference of auto-covariance structure of locally stationary time series, particularly on the testing of weak stationarity of such series. See for instance \cite{Papatoditis2010}, \cite{Dwivedi2011}, \cite{dette2011optimal}, \cite{Nason2013}, \cite{Dette2019} and \cite{Ding2019}. To our knowledge, only constant or smoothly time-varying trend were considered in the aforementioned literature of covariance inference. In this paper, simultaneous confidence bands (SCB) with asymptotically correct coverage probabilities are constructed for the time-varying auto-covariance functions when estimated by the difference-based local linear method. The SCB serves as an asymptotically correct tool for various hypothesis testing problems of the auto-covariance structure under discontinuous mean functions. A general way to perform such hypothesis tests is to estimate the auto-covariance functions under the parametric or semi-parametric null hypothesis and then check whether the fitted functions can be fully embedded into the SCB. As the auto-covariance functions can be estimated with faster convergence rates under the parametric or semi-parametric null hypothesis, the aforementioned way to perform the test achieves correct Type-I error rate asymptotically. The tests are of asymptotic power 1 for local alternatives whose uniform distances from the null are of the order greater than that of the width of the SCB, see Theorem 2 in \cite{Zhou2010} for instance. We also propose a simulation-assisted bootstrapping method for the practical construction of the SCB.  

The paper is organized as follows. In \cref{sec2}, we introduce the model formulation and some assumptions on $x_i$ and $\eps_i^k$. \cref{sec3} presents the asymptotic theory for local estimate $\beta_k(\cdot)$. Practical implementation including a suitable difference lag and tuning parameters selection procedure, estimation of covariance matrices as well as an assisted bootstrapping method are discussed in \cref{sec4}. In \cref{sec_sim}, we conduct some simulation experiments on the performance of our SCBs. A real data application is provided in \cref{sec6}. The proofs of the main results are deferred to the Appendix.  

	\section{Model formulation}\label{sec2}
	Consider model:
	\begin{equation}\label{e1}
	y_{i,n}=\mu_{i,n}+x_{i,n}
	\end{equation}
	where $\mu_{i,n}:=\mu(t_i)$ is a mean function or signal with unknown change points, $y_{i,n}=y(t_i)$ and $x_{i,n}=x(t_i):=G(\frac{i}{n},\mathcal{F}_i)$ is a zero-mean locally stationary process with $t_i=i/n,~i=1,...,n$. \cref{e1} covers a wide range of nonstationary linear and nonlinear processes, see \cite{Zhou2009} for more discussion. We shall omit
	the subscript $n$ in the sequel if no confusion arises. Let $\zeta_i,~i\in\mathbb{Z}$, be  independent identically distributed (i.i.d.) random variables, and define $\mathcal{F}_i=(...,\zeta_{i-1},\zeta_i)$. Then, the process $\{x_i\}$ can be written as  $$x_i=G(t_i,\mathcal{F}_{i}),$$
	where $G(\cdot,\cdot)$ is a measurable function such that $G(t,\mathcal{F}_i)$ is well defined for all $t\in[0,1]$. In this paper, we focus on the case that there exists $0=a_0<a_1<\cdots<a_d<a_{d+1}=1$ such that
	\begin{equation*}
	\mu(t)=\sum_{j=0}^d\mu_j(t)\mathbbm{1}_{\{a_j\le t<a_{j+1}\}},
	\end{equation*}
	where $\mu_j(t)$ is a Lipschitz continuous function over $[a_j,a_{j+1})$ and $d$ is the total number of change points. Till the end of this paper, we will always assume $d=d_n=\bigO(n^\alpha)$ and the maximal jump size $\Delta_n=\max_{1\le j\le d}|\mu_j(a_j)-\mu_{j}(a_j^-)|=\bigO(n^\beta)$ with $0\le \alpha,~\beta<1$.
	
	To estimate the second order structure of the process \cref{e1}, we introduce the approach based on the difference sequence of a finite order applied to the observations $y_i$. Assuming that the number of observations is $n+k$, this difference-based  covariance estimation approach would define simple squared differences of the observations, i.e., $\rho_i^k:=\rho_k(t_{i-k})=(y_i-y_{i-k})^2,~i=k+1,k+2,...,k+n$. Notice that for any fixed $t$, $\{\xi_i(t):=G(t,\mathcal{F}_i)\}_{i\in\mathbb{Z}}$ is a stationary process. For convenience, let us denote $s_i=t_{i-k}=\frac{i-k}{n}.$ Then, $\gamma_k(s_i)$ is the $k$th order autocovariance function of the process $\{x_i\}$ at the fixed time $s_i;$ in other words, 
	$\gamma_k(s_i):={\rm Cov}(x_i,x_{i-k}),~k=0,1,...$. If $k=0$, then $\gamma_0(s_i)$ turns out to be the variance of $x_i$. 
	
	We first introduce some notation that will be used throughout this paper. For any vector $v=(v_1,v_2,...,v_p)\in \mathbb{R}^p$, we let $|v|=(\sum_{i=1}^pv_i^2)^{1/2}$. For any random vector $V$, write $V\in\mathcal{L}^p~(p>0)$ if $\Vert V\Vert_p:=\EE(|V|^p)^{1/p}<\infty$. Denote $C^p([0,1])$ as the function space on [0,1] of functions that have continuous first $p$ derivatives with integer $p>0$. Now, we need the following definition and assumptions:
	\begin{definition}[Physical dependence measure]
		Let $(\zeta_j')_{j\in\mathbb{Z}}$ be an i.i.d. copy of $(\zeta_j)_{j\in\mathbb{Z}}$. Then, for any $j\geq 0$, we denote $\mathcal{F}_j'=(\mathcal{F}_{-1},\zeta_0',\zeta_1,
		...,\zeta_j).$ The physical dependence measure for a stochastic system $L(t,\mathcal{F}_j)$ is defined as
		\begin{equation}\label{e2}
		\delta_q(L,j)=\sup_{t\in[0,1]}\Vert L(t,\mathcal{F}_j)-L(t,\mathcal{F}_j')\Vert_q.
		\end{equation}
	\end{definition}
	If $j<0$, let $\delta_q(L,j)=0$. Thus, $\delta_q(L,j)$ measures the dependence of the output $L(t,\mathcal{F}_j)$ on the single input $\zeta_0$; see \cite{Wu2005} for more details.

    \begin{assumption}\label{a1} 
    	$\delta_8(G,l)=\bigO(l^{-2})$ for $l\ge 1$.
    \end{assumption}

    \begin{assumption}[Stochastic Lipschitz continuity]\label{a2}
    There exists a constant $C>0$, such that $\Vert G(t,\mathcal{F}_i)-G(s,\mathcal{F}_i)\Vert_4 \leq C|t-s|$ holds for all $t,~s\in[0,1]$ and  $\sup_{t\in[0,1]}\Vert G(t,\mathcal{F}_i)\Vert_8 < \infty$.
    \end{assumption}	
	
	\cref{a1} shows that the dependence measure of time series $\{x_i\}$ decays at a polynomial rate, thus indicating short-range dependence. \cref{a2} means that $G$ changes smoothly over time and ensures local stationarity. Here, we show some examples of the locally stationary linear and nonlinear time series that satisfy these assumptions.
	
	\begin{example}[Nonstationary linear processes]
		Let $\zeta_i$ be i.i.d. random variables with $\zeta_i\in \mathcal{L}^q,~q\ge 1$; let $a_j(\cdot),~j=0,1,...,$ be $\mathcal{C}^1([0,1])$ functions such that
		\begin{equation}\label{loc.stat.linear}
		G(t,\mathcal{F}_i)=\sum_{j=0}^\infty a_j(t)\zeta_{i-j}
		\end{equation}
		is well defined for all $t\in [0,1]$. Clearly by \cite[Proposition 2]{Zhou2009}, we know that \cref{a1} will be satisfied if $\sup_{t\in[0,1]}\{|a_l(t)|^2\}=\bigO(l^{-2})$. Furthermore, if $\sum_{j=0}^\infty\{\sup_{t\in[0,1]}|a_j'(t)|^2\}<\infty$, the
		stochastic Lipschitz continuity condition in \cref{a2} also holds true.
	\end{example}
	
	\begin{example}[Nonstationary nonlinear processes]
		Let $\zeta_i$ be i.i.d. random variables and consider the nonlinear time series framework
		\begin{equation}\label{nonlinear}
		\xi_i(t)=R(t,\xi_{i-1}(t),\zeta_i),
		\end{equation}
		 where $R$ is a measurable function and $t\in [0,1]$. This form has been introduced by \cite{Zhou2009} and \cite{Zhou2013}. Suppose that for some $x_0$, we have $\sup_{t\in[0,1]}\Vert R(t,x_0,\zeta_i)\Vert_q<\infty$ for $q>0$. Denote $$\chi:=\sup_{t\in[0,1]}L(t),~\text{where}~L(t)=
		\sup_{x\neq y}\frac{\Vert R(t,x,\zeta_0)-
			R(t,y,\zeta_0)\Vert_q}{|x-y|}.$$ 
		It is known from \cite[Theorem 6]{Zhou2009} that if $\chi<1$, then \cref{nonlinear} admits a unique locally stationary solution with $\xi_i(t)=G(t,\mathcal{F}_i)$ and the physical dependence
		measure satisfies that $\delta_q(G,j)\le C\chi^j$, which shows geometric moment contraction. Hence, the temporal dependence with exponentially decay indicates \cref{a1} holds with $q=8$. Further by \cite[Proposition 4]{Zhou2009}, we conclude that \cref{a2} holds for $q=4$ if 
		$$\sup_{t\in [0,1]}\Vert M(G(t,\mathcal{F}_0))\Vert_q<\infty,
		~\text{where}~M(x)=\sup_{0\le t<s\le 1}\frac{\Vert R(t,x,\zeta_0)-
			R(s,x,\zeta_0)\Vert_q}{|t-s|}.$$ 
	\end{example}

Due to the local stationarity of the process $\{x_i\}$, we have the following lemma which shows that, under mild assumptions, the auto-covariance of $\{x_i\}$ also exhibits polynomial decay.
	
	\begin{lemma}\label{lemma1}
		Suppose Assumptions \ref{a1} and \ref{a2} hold, then we have $\gamma_k(t)=\bigO(k^{-2})$ for $k\ge 1$.
	\end{lemma}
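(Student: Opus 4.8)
The plan is to bound $\gamma_k(t)$ uniformly in $t\in[0,1]$ by combining the orthogonality of martingale differences with the decay of the physical dependence measure. Fix $t\in[0,1]$. Since $\{\xi_i(t)=G(t,\mathcal{F}_i)\}_{i\in\mathbb{Z}}$ is stationary, $\gamma_k(t)=\mathrm{Cov}(\xi_k(t),\xi_0(t))$; replacing $\xi_i(t)$ by $\xi_i(t)-\EE\xi_i(t)$ if necessary (which changes neither $\gamma_k(t)$ nor any of the projections used below, since projections annihilate constants), we may assume $\EE\xi_i(t)=0$. Introduce the projection operators $P_j(\cdot)=\EE(\,\cdot\mid\mathcal{F}_j)-\EE(\,\cdot\mid\mathcal{F}_{j-1})$, so that $\xi_i(t)=\sum_{j\le i}P_j\xi_i(t)$ in $\mathcal{L}^2$.

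The first ingredient is the standard bound $\Vert P_0 G(t,\mathcal{F}_l)\Vert_2\le\delta_2(G,l)$: writing $G^{(0)}(t,\mathcal{F}_l)$ for the version of $G(t,\mathcal{F}_l)$ with $\zeta_0$ replaced by $\zeta_0'$, one has $P_0 G(t,\mathcal{F}_l)=\EE[\,G(t,\mathcal{F}_l)-G^{(0)}(t,\mathcal{F}_l)\mid\mathcal{F}_0\,]$, and Jensen's inequality yields the claim (cf.\ \cite{Wu2005}). By stationarity this gives $\Vert P_j\xi_i(t)\Vert_2=\Vert P_0 G(t,\mathcal{F}_{i-j})\Vert_2\le\delta_2(G,i-j)$, uniformly in $t$. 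Since $\Vert\cdot\Vert_2\le\Vert\cdot\Vert_8$ by Lyapunov's inequality, \cref{a1} gives $\delta_2(G,l)=\bigO(l^{-2})$ for $l\ge1$, while \cref{a2} gives $\delta_2(G,0)\le 2\sup_{t\in[0,1]}\Vert G(t,\mathcal{F}_0)\Vert_2<\infty$.

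Next I would expand the covariance: because $P_j\xi_k(t)$ and $P_l\xi_0(t)$ are orthogonal in $\mathcal{L}^2$ when $j\neq l$, and the index of the second factor runs over $l\le 0$,
\begin{equation*}
\gamma_k(t)=\sum_{j\le 0}\EE\big[(P_j\xi_k(t))(P_j\xi_0(t))\big],
\end{equation*}
so that the Cauchy--Schwarz inequality and the bounds above give, with $m=-j$,
\begin{equation*}
\sup_{t\in[0,1]}|\gamma_k(t)|\le\sum_{m\ge 0}\delta_2(G,k+m)\,\delta_2(G,m).
\end{equation*}
It then remains to estimate this series. The $m=0$ term equals $\delta_2(G,k)\,\delta_2(G,0)=\bigO(k^{-2})$. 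For $1\le m\le k$ use $(k+m)^{-2}\le k^{-2}$ to bound the partial sum by $\bigO\!\big(k^{-2}\sum_{m\ge1}m^{-2}\big)=\bigO(k^{-2})$, and for $m>k$ use $(k+m)^{-2}\le m^{-2}$ to bound the tail by $\bigO\!\big(\sum_{m>k}m^{-4}\big)=\bigO(k^{-3})$. Adding the three pieces gives $\sup_{t\in[0,1]}|\gamma_k(t)|=\bigO(k^{-2})$.

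I do not anticipate a genuine obstacle. The two points requiring care are that \cref{a1} only controls $\delta_8(G,l)$ for $l\ge1$, so the $m=0$ contribution must be handled separately through the uniform moment bound of \cref{a2}, and that the convolution-type sum $\sum_m(k+m)^{-2}m^{-2}$ has to be split at $m=k$ so that each piece is visibly $\bigO(k^{-2})$.
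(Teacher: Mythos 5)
Your proposal is correct and follows essentially the same route as the paper: projection operators $\mathcal{P}_j$, orthogonality, Cauchy--Schwarz, and the bound $\Vert \mathcal{P}_j x_i\Vert_2\le\delta_2(G,i-j)$, leading to the convolution-type sum $\sum_m\delta_2(G,k+m)\delta_2(G,m)$. Your write-up is in fact somewhat more complete than the paper's, which asserts the final $\bigO(k^{-2})$ bound without carrying out the split of the sum at $m=k$ or noting that the $m=0$ term needs the moment bound of \cref{a2} rather than \cref{a1}.
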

	
With the above result, we can choose $h$ large enough such that $\gamma_{k}(t)\approx0$ for $k\ge h$. Next we focus on the difference series $\rho_i^k$ for $k=1,...,h$ and we always assume $h=h_n=\bigO(n^{1/4}\log n)$. By \cref{e1}, we know that 
\begin{align}\label{e3}
\rho_i^k&=(x_i-x_{i-k})^2+(\mu_i-\mu_{i-k})^2+2(x_i-x_{i-k})(\mu_i-\mu_{i-k}) \notag\\
:&=\alpha_i^k+\lambda_i^k+\theta_i^k.
\end{align}
Recall $s_i=(i-k)/n$ for $i=k+1,...,.k+n$ and notice that $\alpha_i^k:=\alpha_k(s_i)=(x_i-x_{i-k})^2$ is the squared difference of two locally stationary processes. Therefore, it is also a locally stationary process. As a result, we can define
\begin{equation}\label{e4}
\alpha_i^k=(x_i-x_{i-k})^2=\beta_k(s_i)+\eps_i^k,~k=1,...,h,
\end{equation}
where $\beta_k(\cdot)$ is the unknown trend function and $\eps_i^k:=\eps_k(s_i)$ is a zero-mean process. Then $\eps_i^k$ can be written as
\begin{equation}\label{e5}
\eps_i^k=H_k(s_i,\mathcal{F}_{i}),
\end{equation}
where $H_k$ is a measurable function similar to $G$. With \cref{e4}, if the trend function is smooth, one can easily obtain the estimator of $\beta_k(\cdot)$. Now, we introduce the following conditions. 

\begin{assumption}\label{a3}
For each $k=0,...,h-1$, we assume that the $k$th order autocovariance function $\gamma_k(t) \in \mathcal{C}^3([0,1])$.
\end{assumption}

\begin{assumption}\label{a4}
	The smallest eigenvalue of $\sigma_k(t)$ is bounded away from 0 on $[0,1]$ for $k=1,...,h$, where
	\begin{equation}\label{e12}
	\sigma_k(t)=\left\{\sum_{j=-\infty}^{\infty}{\rm Cov}(H_k(t,\mathcal{F}_0),H_k(t,\mathcal{F}_j))\right\}^{1/2},
	\end{equation}
	 and $\sigma_k^2(t)$ represents the long-run variance of $\eps_k(t)$ for each fixed $t\in[0,1]$.
\end{assumption}

\begin{assumption}\label{a5}
	A kernel $K(\cdot)$ is a symmetric proper density function with the compact support $[-1,1]$.
\end{assumption}

\cref{a3} guarantees that the trend function $\beta_k(\cdot)$ changes smoothly for each $k=1,...,h$ and is three-times continuously differentiable over $[0,1]$. \cref{a4} prevents the asymptotic multicollinearity of regressors. \cref{a5} allows popular kernel functions such as Epanechnikov kernel. Now substituting \cref{e4} to \cref{e3}, we have
\begin{equation}\label{e6} 
\rho_i^k=\beta_k(s_i)+\eps_i^k+\lambda_i^k+\theta_i^k.
\end{equation}

Since the length of the series $\{\rho_i^k\}_{i=k+1}^{k+n}$ is $n$, we  reset the subscript with respect to $i$ as $\{\rho_i^k\}_{i=1}^n$ and therefore the time point turns out to be $t_i=i/n$ for $i=1,...,n$. Similar notations are used for series $\{\eps_i^k\},~\{\lambda_i^k\}$ and $\{\theta_i^k\}$. By \cref{a3} and the Taylor's expansion on $\beta_k(\cdot)$, it is natural to estimate $\beta_k(t)$ using the local linear estimator as follows:
  
\begin{equation}\label{e7}
(\widehat{\beta}_{k,b}(t),\widehat{\beta}_{k,b}'(t))=\mathop{\arg\min}_{c_0,c_1\in\mathbb{R}}\left[\sum_{i=1}^{n}[\rho_i^k-c_0-c_1(t_i-t)]^2K_b
(t_i-t)\right],
\end{equation}
where $t_i=i/n$ with $i=1,...,n$ and $K_b(\cdot)=K(\cdot/b)$ is a kernel function, $b=b_n$ is the bandwidth satisfying $b\to 0$ and $nb\to\infty$. Since \cref{e7} is essentially a weighted least squares estimate, we can write the solution of \cref{e7} as
\begin{equation}\label{e8}
\widehat{\beta}_{k,b}(t)=\sum_{i=1}^{n}\omega_n^b(t,i)\rho_i^k,
\end{equation}
where $\omega_n^b(t,i)=K_b(t_i-t)\frac{S_2^b(t)-(t_i-t)S_1^b(t)}
{S_2^b(t)S_0^b(t)-[S_1^{b}(t)]^2}$ with $S_j^b(t)=\sum_{i=1}^{n}(t_i-t)^jK_b(t_i-t)$, $j=0,1,2$. The time domain of $t$ is fixed over $[0,1]$ and $\omega_n^b(t,i),~n(t-b)\leq i\leq n(t+b)$ is the weight given to each observation. 

Next, we will establish the following two lemmas that are useful in establishing asymptotic properties of proposed estimators. Their proofs are given in the Appendix. 

\begin{lemma}\label{lemma2}
	Suppose Assumptions \ref{a1}-\ref{a2} hold, then we have $\delta_4(H_k,l)=\bigO(l^{-2})$ for $0<l<k$ and $\delta_4(H_k,l)=\bigO(l^{-2})+\bigO((l-k+1)^{-2})$ for $l\ge k$, where $k=1,..,h$.
\end{lemma}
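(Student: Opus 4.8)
The plan is to write $H_k$ out explicitly, center-and-cancel, and then reduce the $\mathcal L^4$ bound to the physical dependence measure $\delta_8(G,\cdot)$ of the underlying process via Hölder's inequality. Recall that $\alpha_i^k=(x_i-x_{i-k})^2$ with $x_i=G(t_i,\mathcal F_i)$, $t_i=s_i+k/n$, $t_{i-k}=s_i$, and $\eps_i^k=\alpha_i^k-\EE\alpha_i^k$. Since the joint law of $(G(s,\mathcal F_i),G(s',\mathcal F_{i-k}))$ does not depend on $i$, $\EE\alpha_i^k$ depends only on $s_i$, so we may take
\begin{equation*}
H_k(t,\mathcal F_i)=\bigl(G(t+k/n,\mathcal F_i)-G(t,\mathcal F_{i-k})\bigr)^2-\EE\bigl(G(t+k/n,\mathcal F_i)-G(t,\mathcal F_{i-k})\bigr)^2.
\end{equation*}
Fix $l\ge 1$, set $U=G(t+k/n,\mathcal F_l)$, $V=G(t,\mathcal F_{l-k})$, and let $U',V'$ be the same quantities with $\mathcal F_l,\mathcal F_{l-k}$ replaced by $\mathcal F_l',\mathcal F_{l-k}'$ (both obtained by replacing the common innovation $\zeta_0$ by $\zeta_0'$). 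Because $(\zeta_j')$ is an i.i.d.\ copy of $(\zeta_j)$, the pair $(U',V')$ has the same joint law as $(U,V)$, so the expectation terms cancel and
\begin{equation*}
H_k(t,\mathcal F_l)-H_k(t,\mathcal F_l')=(U-V)^2-(U'-V')^2=\bigl[(U-U')-(V-V')\bigr]\bigl[(U-V)+(U'-V')\bigr].
\end{equation*}

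Next I would apply Hölder's inequality in the form $\|AB\|_4\le\|A\|_8\,\|B\|_8$ to the right-hand side. For the second factor, stationarity in $i$ together with the moment bound in \cref{a2} gives $\|U\|_8=\|G(t+k/n,\mathcal F_0)\|_8\le\sup_{s\in[0,1]}\|G(s,\mathcal F_0)\|_8<\infty$, and likewise for $V,U',V'$, so $\|(U-V)+(U'-V')\|_8\le 4\sup_{s\in[0,1]}\|G(s,\mathcal F_0)\|_8=:C_0$. For the first factor, the triangle inequality yields $\|(U-U')-(V-V')\|_8\le\|U-U'\|_8+\|V-V'\|_8$. Now $\mathcal F_l$ and $\mathcal F_l'$ differ only in the innovation that is $l$ steps in the past, so $\|U-U'\|_8\le\delta_8(G,l)=\bigO(l^{-2})$ by \cref{a1}; whereas $\mathcal F_{l-k}$ and $\mathcal F_{l-k}'$ differ only in the innovation $l-k$ steps in the past, so $\|V-V'\|_8\le\delta_8(G,l-k)$, which equals $0$ when $l<k$ (the innovation $\zeta_0$ does not enter $\mathcal F_{l-k}$) and is $\bigO((l-k)^{-2})$ when $l>k$, again by \cref{a1}.

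Combining gives $\delta_4(H_k,l)\le C_0\bigl(\delta_8(G,l)+\delta_8(G,l-k)\bigr)$. When $0<l<k$ the second term vanishes and $\delta_4(H_k,l)=\bigO(l^{-2})$. When $l\ge k$ the only value not directly covered by \cref{a1} is $l=k$, where $\delta_8(G,0)\le 2\sup_{s\in[0,1]}\|G(s,\mathcal F_0)\|_8$ is simply a finite constant; this boundary case is exactly why the bound is stated with $(l-k+1)^{-2}$ rather than $(l-k)^{-2}$. Since $(l-k)^{-2}\asymp(l-k+1)^{-2}$ for $l>k$ and $(l-k+1)^{-2}=1$ for $l=k$, we obtain $\delta_4(H_k,l)=\bigO(l^{-2})+\bigO((l-k+1)^{-2})$ for $l\ge k$, which is the claim. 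The only step that really needs care is the bookkeeping of which lag the perturbed innovation $\zeta_0$ occupies inside $\mathcal F_l$ versus inside $\mathcal F_{l-k}$, together with the harmless $l=k$ boundary adjustment; everything else is Hölder plus Assumptions \ref{a1}--\ref{a2}.
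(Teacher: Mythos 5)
Your proof is correct and follows essentially the same route as the paper's: the difference-of-squares factorization of $H_k(t,\mathcal F_l)-H_k(t,\mathcal F_l')$, Hölder's inequality splitting the $\mathcal L^4$ norm into two $\mathcal L^8$ factors, the uniform moment bound from \cref{a2} for one factor and $\delta_8(G,l)+\delta_8(G,l-k)$ from \cref{a1} for the other. Your explicit treatment of the cancelling centering term and of the $l=k$ boundary (where $\delta_8(G,0)$ is merely a finite constant, absorbed by $(l-k+1)^{-2}=1$) is a bit more careful than the paper's, but the argument is the same.
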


\begin{lemma}\label{lemma3}
	Suppose Assumptions \ref{a1}-\ref{a3} hold, then we have
	$\Vert H_k(t,\mathcal{F}_i)-H_k(s,\mathcal{F}_i)\Vert_2 \leq C|t-s|$ and  $\sup_{t\in[0,1]}\Vert H_k(t,\mathcal{F}_i)\Vert_4 < \infty$.
\end{lemma}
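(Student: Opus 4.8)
The plan is to make the form of $H_k$ explicit and then reduce both assertions to a couple of applications of Assumption~\ref{a2} together with elementary inequalities. By \eqref{e4} we have $\eps_i^k=(x_i-x_{i-k})^2-\beta_k(s_i)$, so, up to the immaterial time shift of order $k/n$ coming from $x_i=G(t_i,\mathcal{F}_i)$ versus the anchor $s_i=t_{i-k}$ (a shift that cancels in every difference of time arguments below and is absorbed in the later analysis since $h/n\to 0$), one may take $H_k(t,\mathcal{F}_i)=D_k(t,\mathcal{F}_i)^2-\EE\bigl[D_k(t,\mathcal{F}_i)^2\bigr]$ with $D_k(t,\mathcal{F}_i):=G(t,\mathcal{F}_i)-G(t,\mathcal{F}_{i-k})$, where $\beta_k(t)=\EE[D_k(t,\mathcal{F}_i)^2]$ does not depend on $i$ because $(\mathcal{F}_i,\mathcal{F}_{i-k})$ is stationary in $i$. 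The whole argument then rests on the stochastic Lipschitz continuity and the finite eighth moment of $G$ provided by Assumption~\ref{a2} (Assumptions~\ref{a1} and \ref{a3} are only needed for the surrounding development).

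For the moment bound I would write $\|D_k(t,\mathcal{F}_i)^2\|_4=\|D_k(t,\mathcal{F}_i)\|_8^2$ and bound $\|D_k(t,\mathcal{F}_i)\|_8\le 2\sup_{u\in[0,1]}\|G(u,\mathcal{F}_0)\|_8<\infty$ by the triangle inequality and stationarity; subtracting the mean costs only a factor $2$ by Jensen's inequality, which gives $\sup_t\|H_k(t,\mathcal{F}_i)\|_4<\infty$ with a bound uniform in $k=1,\dots,h$.

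For the Lipschitz bound I would factor $a^2-b^2=(a-b)(a+b)$, namely
\[
D_k(t,\mathcal{F}_i)^2-D_k(s,\mathcal{F}_i)^2=\bigl(D_k(t,\mathcal{F}_i)-D_k(s,\mathcal{F}_i)\bigr)\bigl(D_k(t,\mathcal{F}_i)+D_k(s,\mathcal{F}_i)\bigr),
\]
and apply the Cauchy--Schwarz inequality $\|UV\|_2\le\|U\|_4\|V\|_4$. The ``sum'' factor is bounded in $L^4$ by $4\sup_u\|G(u,\mathcal{F}_0)\|_4\le 4\sup_u\|G(u,\mathcal{F}_0)\|_8<\infty$ exactly as above, while the ``difference'' factor obeys $\|D_k(t,\mathcal{F}_i)-D_k(s,\mathcal{F}_i)\|_4\le\|G(t,\mathcal{F}_i)-G(s,\mathcal{F}_i)\|_4+\|G(t,\mathcal{F}_{i-k})-G(s,\mathcal{F}_{i-k})\|_4\le 2C|t-s|$ by the Lipschitz part of Assumption~\ref{a2}. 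Hence $\|D_k(t,\mathcal{F}_i)^2-D_k(s,\mathcal{F}_i)^2\|_2$ is at most a constant multiple of $|t-s|$; since $|\beta_k(t)-\beta_k(s)|$ is dominated by the $L^1$, hence $L^2$, norm of the same quantity, one final triangle inequality gives $\|H_k(t,\mathcal{F}_i)-H_k(s,\mathcal{F}_i)\|_2\le C|t-s|$.

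I do not anticipate a genuine obstacle; the one point requiring attention is the exponent bookkeeping. Squaring an increment halves the number of usable moments, which is precisely why Assumption~\ref{a2} carries an $L^8$ control of $G$ rather than an $L^4$ one: without it the Cauchy--Schwarz split above would be unjustified and the target norms ($L^2$ for the increment, $L^4$ for the level) would be unreachable. The remaining bookkeeping — keeping the constants uniform over $k\le h$, and checking that replacing the true increment $G(t+k/n,\mathcal{F}_i)-G(t,\mathcal{F}_{i-k})$ by the frozen-time increment $D_k(t,\mathcal{F}_i)$ perturbs everything only by $\bigO(k/n)$ — is routine.
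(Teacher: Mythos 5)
Your proof is correct and follows essentially the same route as the paper's: the factorization $a^2-b^2=(a-b)(a+b)$ combined with the Cauchy--Schwarz split $\Vert UV\Vert_2\le\Vert U\Vert_4\Vert V\Vert_4$ and the stochastic Lipschitz/moment bounds of \cref{a2}. The only substantive difference is that you control $\beta_k(t)-\beta_k(s)$ and $\sup_t|\beta_k(t)|$ directly via $|\EE X|\le\Vert X\Vert_2$ applied to the same squared-increment quantities, whereas the paper invokes \cref{a3} for the Lipschitz continuity and boundedness of $\beta_k$; your variant is slightly tidier and shows \cref{a3} is not actually needed for this lemma.
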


\section{Main Results}\label{sec3}
\subsection{Asymptotic theory}
By \cref{a3} and for $l=0,1,...$, define
\begin{align}
Q_{n,l}^k(t)&=\frac{1}{nb}\sum_{i=1}^n
\left(\frac{t_i-t}{b}\right)^lK
\left(\frac{t_i-t}{b}\right), \label{e9}\\
R_{n,l}^k(t)&=\frac{1}{nb}\sum_{i=1}^n \rho_i^k\left(\frac{t_i-t}{b}\right)^lK
\left(\frac{t_i-t}{b}\right). \label{e10}
\end{align} 
Then \cref{e7} can be expressed as
\begin{equation}\label{e11}
\left(\begin{array}{c}
\widehat{\beta}_{k,b}(t)\\
b\widehat{\beta}_{k,b}'(t)
\end{array}\right)=
\left(\begin{array}{cc}
Q_{n,0}^k(t) & Q_{n,1}^k(t) \\ 
Q_{n,1}^k(t) & Q_{n,2}^k(t)
\end{array}\right)^{-1}
\left(\begin{array}{c}
R_{n,0}^k(t)\\
R_{n,1}^k(t)
\end{array}\right):=[Q_n^k(t)]^{-1}R_n^k(t).
\end{equation}
Let 
$$\mu_l=\int_{\mathbb{R}}x^lK(x) \dee x~~\text{and}~~ \phi_l=\int_{\mathbb{R}}x^lK^2(x) \dee x,~l=0,1,....$$
Now, we will construct SCBs for $\beta_k(\cdot),~k=1,...,h$.

\begin{theorem}\label{thm2}
	Suppose that Assumptions \ref{a1}-\ref{a5} hold and further assume that\\
	(1) $\sigma_k(t)$ is Lipschitz continuous on [0,1].\\
	(2) $\alpha+2\beta \le 2/5$.\\
	(3) $\log(n)/(n^{3/5-\beta}b)+nb^5\log(n) \to 0$.\\
	Then, for each $k=1,...,h$, we have
	\begin{align*}
	\Pr\left[\sqrt{\frac{nb}{\phi_0}}\sup_{t\in\mathcal{T}}\left|\sigma_k^{-1}(t)\left\{\widehat{\beta}_{k,b}(t)-\beta_k(t)\right\}\right|-B_K(m^\ast)\leq \frac{u}{\sqrt{2\log(m^\ast)}}\right]\\
	=\exp\{-2\exp(-u)\},
	\end{align*}
	as $n\to\infty$, where $\mathcal{T}=[b,1-b],~m^\ast=1/b$ and 
	$$B_K(m^\ast)=\sqrt{2\log(m^\ast)}+\frac{1}{\sqrt{2\log(m^\ast)}}\log\left(\frac{1}{\pi}\sqrt{\frac{1}{4\phi_0}\int_{-1}^1|K'(u)|^2du}\right).$$
\end{theorem}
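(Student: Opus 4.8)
The plan is to establish this Gaussian-type extreme-value limit in three stages: a bias-variance decomposition of the local linear estimator, a strong (Gaussian) approximation of the dominant stochastic term, and an application of a classical extreme-value result for smooth stationary Gaussian processes. First I would write $\widehat{\beta}_{k,b}(t)-\beta_k(t)$ using the weights $\omega_n^b(t,i)$ from \eqref{e8}. From \eqref{e6} the right-hand side of the regression carries four pieces, so the estimation error splits as a bias term from smoothing $\beta_k$, a stochastic term $\sum_i \omega_n^b(t,i)\eps_i^k$, and two ``contamination'' terms $\sum_i \omega_n^b(t,i)\lambda_i^k$ and $\sum_i \omega_n^b(t,i)\theta_i^k$ coming from the discontinuous trend. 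Assumption \ref{a3} (so $\beta_k\in\mathcal C^3$, by \cref{lemma1} plus smoothness of $\gamma_k$) gives the usual $O(b^2)$ bias, which after multiplication by $\sqrt{nb}$ is $O(\sqrt{nb^5})$ and hence negligible under condition (3). The contamination terms must be shown to be uniformly $o_p(1/\sqrt{nb\log n})$ after the $\sqrt{nb}$ scaling; this is where the rate restrictions $d_n=O(n^\alpha)$, $\Delta_n=O(n^\beta)$, condition (2) $\alpha+2\beta\le 2/5$, and the first half of condition (3) enter. The key point is that $\lambda_i^k=(\mu_i-\mu_{i-k})^2$ is nonzero only for the $O(d_n k)$ indices $i$ lying within distance $k$ of a change point, so the kernel-weighted sum over a window of width $\sim nb$ picks up at most $O(d_n h)$ such terms, each of size $O(\Delta_n^2)$ and weight $O(1/(nb))$; a parallel but slightly more delicate argument, using a maximal inequality for the locally stationary sequence $\{x_i-x_{i-k}\}$ (via the physical dependence bounds in \cref{lemma2}), controls the cross term $\theta_i^k$. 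Carrying these bounds uniformly over $t\in\mathcal T$ and over $k=1,\dots,h$ with $h=O(n^{1/4}\log n)$ is the main bookkeeping obstacle.

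Once the error is reduced to the leading stochastic term, I would show
\[
\sqrt{\frac{nb}{\phi_0}}\,\sigma_k^{-1}(t)\sum_{i=1}^n \omega_n^b(t,i)\eps_i^k
\;=\;\frac{1}{\sqrt{\phi_0\,nb}}\sum_{i=1}^n \sigma_k^{-1}(t)K_b(t_i-t)\eps_i^k\;+\;o_p\!\left(\frac{1}{\sqrt{\log n}}\right)
\]
uniformly in $t$, replacing the exact local-linear weight by the plain Nadaraya--Watson kernel weight (the correction from the $S_1^b,S_2^b$ terms is $O(b)$ smaller and, crucially, symmetric, so it contributes only to higher-order bias on the interior $\mathcal T=[b,1-b]$). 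The heart of the argument is then a strong invariance principle: using \cref{lemma2}, \cref{lemma3} and Assumption \ref{a1}, the short-range-dependent array $\{\eps_i^k\}$ admits a Gaussian approximation — I would invoke a Komlós--Major--Tusnády-type coupling for locally stationary processes (as in Zhou and Wu, or Wu's physical-dependence framework) — so that $\sum_{i\le m}\eps_i^k$ is within a negligible error of $\int_0^{m/n}\sigma_k(s)\,\dee B(s)$ for a standard Brownian motion $B$. After this coupling, the kernel-smoothed statistic behaves like $\sup_{t\in\mathcal T}|\phi_0^{-1/2}(nb)^{-1/2}\int \sigma_k^{-1}(t)K_b(s-t)\sigma_k(s)\,\dee B(s)|$; Assumption 1(1) (Lipschitz $\sigma_k$) lets me replace $\sigma_k(s)$ by $\sigma_k(t)$ inside the window at cost $O(b)$, leaving exactly a stationary Gaussian process $Z(t)=\phi_0^{-1/2}(nb)^{-1/2}\int K_b(s-t)\,\dee B(s)$ of unit variance whose correlation function near the diagonal is governed by $\int K'^2$.

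The final step is to apply the classical result of Bickel–Rosenblatt / Leadbetter–Lindgren–Rootzén type (in the form used, e.g., in Zhou (2010), Theorem 2) on the maximum of a smooth unit-variance stationary Gaussian process over an interval of length $1$ discretized at scale $b$: $\Pr[a_{m^\ast}(\sup_t|Z(t)|-b_{m^\ast})\le u]\to \exp(-2e^{-u})$ with $a_{m^\ast}=\sqrt{2\log m^\ast}$, $m^\ast=1/b$, and the centering $b_{m^\ast}=B_K(m^\ast)$ determined by the second spectral moment $\int_{-1}^1|K'(u)|^2\,du$ and the normalization $\phi_0$. Collecting the $o_p(1/\sqrt{\log n})$ errors from the bias, contamination, weight-simplification, Gaussian coupling and $\sigma_k$-localization steps — each of which was arranged to be of smaller order than the $1/\sqrt{2\log m^\ast}$ scale appearing in the statement — and invoking Slutsky gives the claimed limit, uniformly available for every $k=1,\dots,h$. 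I expect the genuinely hard part to be the uniform (in both $t$ and $k$) control of the discontinuity-induced terms $\lambda_i^k,\theta_i^k$ together with tracking how the diverging lag $h=O(n^{1/4}\log n)$ interacts with the Gaussian approximation error; the extreme-value step itself is standard once the process has been reduced to the Gaussian template.
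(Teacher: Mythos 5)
Your proposal follows essentially the same route as the paper: the same bias/stochastic/contamination decomposition of the local linear estimator, negligibility of the bias and of the $\lambda_i^k,\theta_i^k$ terms under conditions (2)--(3) (the paper delegates this to its \cref{thm1}), a Zhou--Wu-type Gaussian coupling of the partial sums of $\eps_i^k$, Lipschitz localization of $\sigma_k$, and a Bickel--Rosenblatt extreme-value step (the paper's Lemmas \ref{l5}--\ref{l6}). If anything you are slightly more careful than the paper in insisting the remainder terms be $o_p(1/\sqrt{\log n})$ after the $\sqrt{nb}$ scaling rather than merely $o_p(1)$, and you need not worry about uniformity in $k$ since the theorem is stated for each fixed $k$.
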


Let us comment on the conditions listed in \cref{thm2}. Condition (1) shows the smoothness of $\sigma_k(t)$. Condition (2) indicates that the change-point number and size can both go to infinity but at a slow rate. The assumption $nb^5\log(n)\rightarrow 0$ in Condition (3) is an undersmoothing requirement that reduces the bias of the estimators to the second order.

Notice that $\EE(\rho_i^k)=\beta_k(t_i)+\EE(\lambda_i^k)=2\gamma_0(t_i)-2\gamma_k(t_i)
+\tilde\Delta_i^2$, where $\tilde{\Delta}_i^2=\bigO(1/{n^2})$ when there is no change point between observations $y_i$ and $y_{i-k}$, $\tilde{\Delta}_i^2\le \Delta_n^2=\bigO(n^{2\beta})$ when there exists at least a change point on $\mu(\cdot)$. However, the estimate of  $\lambda_i^k$ can be viewed as a negligible term (see \cref{eq3} in the proof of \cref{thm1}). With the previous discussion in mind, we can define 
\begin{align*}
&\widehat{\gamma}_0(t)=\frac{1}{2}\widehat{\beta}_{h,b_h}(t),\\
&\widehat{\gamma}_k(t)=\frac{1}{2}
\left[\widehat{\beta}_{h,b_k}(t)-\widehat{\beta}_{k,b_k}(t)\right],~ 
k=1,...,h-1,
\end{align*}
where $b_h$ and $b_k$ are the bandwidths for estimators $\widehat{\gamma}_0(t)$ and $\widehat{\gamma}_k(t)$, respectively. Making it easy to distinguish, here we use the different notations for the bandwidths which will be selected by some criterion (see \cref{sec4.4}). Notice that we require the same bandwidth $(b_k)$ to compute the estimator of $\gamma_k(t)$. With the above results, the SCB for $\gamma_0(\cdot)$ is straightforward.
\begin{corollary}\label{col1}
	With the conditions in \cref{thm2}, we have
	\begin{align*}
	\Pr\left[\sqrt{\frac{4nb_h}{\phi_0}}\sup_{t\in\mathcal{T}}
	\left|\sigma_h^{-1}(t)\left\{\widehat{\gamma}_0(t)
	-\gamma_0(t)\right\}\right|
	-B_K(m^\ast)\le\frac{u}{\sqrt{2\log(m^\ast)}}\right]\\
	=\exp\{-2\exp(-u)\}.
	\end{align*}
\end{corollary}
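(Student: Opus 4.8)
\medskip
\noindent\textbf{Proof proposal.}
The plan is to obtain \cref{col1} as an essentially immediate consequence of \cref{thm2} applied with $k=h$ and bandwidth $b=b_h$, modulo two elementary reductions: the scalar $\frac{1}{2}$ linking $\widehat{\gamma}_0$ to $\widehat{\beta}_{h,b_h}$, and the gap between the trend $\beta_h(\cdot)$ that $\widehat{\beta}_{h,b_h}$ actually targets and the object of interest $2\gamma_0(\cdot)$. Since $\widehat{\gamma}_0(t)=\frac{1}{2}\widehat{\beta}_{h,b_h}(t)$, one has $\widehat{\gamma}_0(t)-\gamma_0(t)=\frac{1}{2}\{\widehat{\beta}_{h,b_h}(t)-\beta_h(t)\}+\frac{1}{2}\{\beta_h(t)-2\gamma_0(t)\}$; multiplying by $\sqrt{4nb_h/\phi_0}\,\sigma_h^{-1}(t)$, taking $\sup_{t\in\mathcal{T}}$, and using the triangle inequality shows that $\sqrt{4nb_h/\phi_0}\sup_{t\in\mathcal{T}}|\sigma_h^{-1}(t)\{\widehat{\gamma}_0(t)-\gamma_0(t)\}|$ and $\sqrt{nb_h/\phi_0}\sup_{t\in\mathcal{T}}|\sigma_h^{-1}(t)\{\widehat{\beta}_{h,b_h}(t)-\beta_h(t)\}|$ differ by at most the deterministic quantity $R_n:=\sqrt{nb_h/\phi_0}\,(\sup_t\sigma_h^{-1}(t))\,\sup_t|\beta_h(t)-2\gamma_0(t)|$ (the factor $\frac12$ being absorbed by $\sqrt{4}=2$).

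The key step is to show $R_n=o(1/\sqrt{2\log m^\ast})$. To bound $\sup_t|\beta_h(t)-2\gamma_0(t)|$, expand $\beta_k(s_i)=\EE[(x_i-x_{i-k})^2]=\EE[x_i^2]+\EE[x_{i-k}^2]-2\EE[x_ix_{i-k}]$; using the stochastic Lipschitz continuity in \cref{a2} to replace the time argument $t_i$ in the copies of $G$ carrying index $i$ by $t_{i-k}=s_i$, each such replacement costing $\bigO(k/n)$ (in $\mathcal{L}^2$, via Cauchy--Schwarz and the moment bound of \cref{a2}) and the replaced expectations being $\gamma_0(s_i)$ and $\gamma_k(s_i)$ by stationarity of the frozen process, gives $\beta_k(t)=2\gamma_0(t)-2\gamma_k(t)+\bigO(k/n)$ uniformly in $t$. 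Taking $k=h$ and invoking \cref{lemma1} ($\gamma_h(t)=\bigO(h^{-2})$) yields $\sup_t|\beta_h(t)-2\gamma_0(t)|=\bigO(h^{-2})+\bigO(h/n)$, which for $h=\bigO(n^{1/4}\log n)$ is $\bigO(n^{-1/2}(\log n)^{-2})$. Since $\inf_t\sigma_h(t)$ is bounded away from $0$ by \cref{a4} (with a constant not depending on $h$), $R_n=\bigO(\sqrt{nb_h}\cdot n^{-1/2}(\log n)^{-2})=\bigO(\sqrt{b_h}(\log n)^{-2})$; as $m^\ast=1/b_h\to\infty$ and $b_h\to0$, multiplying by $\sqrt{2\log m^\ast}$ still gives $\bigO(\sqrt{b_h\log(1/b_h)}\,(\log n)^{-2})\to0$.

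Consequently the normalised supremum attached to $\widehat{\gamma}_0$ agrees, after subtracting $B_K(m^\ast)$ and rescaling by $\sqrt{2\log m^\ast}$, with the one attached to $\widehat{\beta}_{h,b_h}$ up to a term tending to $0$. Applying \cref{thm2} with $k=h$ and $b=b_h$ (its hypotheses holding by assumption) then yields $\Pr[\,\cdot\le u/\sqrt{2\log m^\ast}\,]\to\exp\{-2\exp(-u)\}$, which is the claim. The only genuinely delicate point is the uniform control of the bias $\beta_h(t)-2\gamma_0(t)$ together with the verification that the prescribed order $h=\bigO(n^{1/4}\log n)$ is small enough to render this contribution negligible relative to the logarithmically shrinking scale $1/\sqrt{2\log m^\ast}$ of the band; the remaining manipulations are routine once \cref{thm2} is available.
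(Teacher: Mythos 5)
Your proof is correct and follows the same route the paper intends: the paper gives no separate argument for this corollary, treating it as immediate from \cref{thm2} applied with $k=h$ and $b=b_h$ together with the identity $\widehat{\gamma}_0(t)=\tfrac12\widehat{\beta}_{h,b_h}(t)$, the factor $\sqrt{4nb_h}=2\sqrt{nb_h}$ absorbing the $\tfrac12$. Your explicit uniform control of the residual bias $\sup_t|\beta_h(t)-2\gamma_0(t)|=\bigO(h^{-2})+\bigO(h/n)$ against the $1/\sqrt{2\log m^\ast}$ scale of the band is precisely the detail the paper leaves implicit in the statement ``$\gamma_k(t)\approx 0$ for $k\ge h$'' (note it requires $h$ to actually diverge at the stated rate, not merely be bounded by it), and it is a worthwhile addition rather than a deviation.
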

Furthermore, to facilitate the SCB for $\gamma_k(\cdot),~k=1,...,h-1$, we will consider a linear combination of $\beta_k(\cdot)$. First, define $\widetilde{H}_k=(H_h(t,\mathcal{F}_i),
H_k(t,\mathcal{F}_i))^\top$ and a $2$ by $2$ matrix
\begin{equation}\label{sigma}
\Sigma_k^2(t)=\sum_{j=-\infty}^{\infty}{\rm Cov}(\widetilde{H}_k(t,\mathcal{F}_0)
\widetilde{H}_k(t,\mathcal{F}_j)).
\end{equation}
We also denote $\widetilde{\beta}_k(t)=(\beta_h(t),\beta_k(t))^\top$ as a two-dimensional vector, $C=(1,-1)^\top$ and $\beta_{C,k}(t)=C^\top \widetilde{\beta}_k(t).$ The natural estimators for $\widetilde{\beta}_k(t)$ and $\beta_{C,k}(t)$ are $\widehat{\beta}_k(t)=(\widehat{\beta}_{h,b_k}, \widehat{\beta}_{k,b_k})^\top$ and $\widehat{\beta}_{C,k}(t)=
C^\top\widehat{\beta}_k(t)=\widehat{\beta}_{h,b_k}-
\widehat{\beta}_{k,b_k}$, respectively. Furthermore, let $\sigma_{C,k}^2(t)=C^\top\Sigma^2_k(t)C,$ similar to  Theorem 3 in \cite{ZW2010}. At this point, we can obtain the following result.

\begin{corollary}\label{col2}
    Suppose that the smallest eigenvalue of $\sigma_{C,k}(t)$ is bounded away from 0 on $[0,1]$ for $k=1,...,h-1.$ Moreover, we assume that all of the conditions of \cref{thm2} are valid. Then, we have (i)
	\begin{align*}
		\Pr\left[\sqrt{\frac{nb_k}{\phi_0}}\sup_{t\in\mathcal{T}}
		\left|\sigma_{C,k}^{-1}(t)\left\{\widehat{\beta}_{C,k}(t)
		-\beta_{C,k}(t)\right\}\right|
		-B_K(m^\ast)\leq \frac{u}{\sqrt{2\log(m^\ast)}}\right]\\
	=\exp\{-2\exp(-u)\},
	\end{align*}
	as $n\to\infty$. (ii) Furthermore, one can easily deduce the SCB for $\gamma_k(\cdot)$, $k=1,...,h-1$,
	\begin{align*}
	\Pr\left[\sqrt{\frac{4nb_k}{\phi_0}}\sup_{t\in\mathcal{T}}\left|\sigma_{C,k}^{-1}(t)\left\{\widehat{\gamma}_k(t)-\gamma_k(t)\right\}\right|-B_K(m^\ast)\leq \frac{u}{\sqrt{2\log(m^\ast)}}\right]\\
	=\exp\{-2\exp(-u)\}.
	\end{align*}
\end{corollary}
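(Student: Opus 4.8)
The plan is to obtain both parts of the corollary from \cref{thm2}. For part~(i) I would first recognize $\widehat\beta_{C,k}(\cdot)$ as the difference-based local linear estimator attached to the single ``lag-$h$ minus lag-$k$'' series $\{\rho_i^h-\rho_i^k\}_{i=1}^n$, and then check that this series meets, with driving noise $C^\top\widetilde H_k$ and long-run variance $\sigma_{C,k}^2$, precisely the hypotheses under which \cref{thm2} was established. Part~(ii) would then follow from the algebraic identity relating $\widehat\gamma_k$, $\widehat\beta_{C,k}$ and $\gamma_h$, together with \cref{lemma1}.

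\emph{Reduction of part~(i).} Since $\widehat\beta_{h,b_k}$ and $\widehat\beta_{k,b_k}$ use the same design $t_i=i/n$, the same kernel $K$ and the same bandwidth $b_k$, we have $\widehat\beta_{C,k}(t)=\sum_{i=1}^n\omega_n^{b_k}(t,i)(\rho_i^h-\rho_i^k)$. Using \cref{e3}--\cref{e5} I would decompose
\[
\rho_i^h-\rho_i^k=\beta_{C,k}(s_i)+(\eps_i^h-\eps_i^k)+(\lambda_i^h-\lambda_i^k)+(\theta_i^h-\theta_i^k),
\]
where, after the common re-indexing, the noise term equals $\eps_i^h-\eps_i^k=C^\top\widetilde H_k(t_i,\mathcal F_i)$ up to a time-shift of order $h/n$ that is absorbed (it contributes uniformly $o(1)$ after the $\sqrt{nb_k}$ normalization, since $h/n=o(b_k)$ under Condition~(3)). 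The change-point contamination is controlled by $|\lambda_i^h-\lambda_i^k|\le|\lambda_i^h|+|\lambda_i^k|$, each term being shown negligible for the local linear fit in the proof of \cref{thm1} under Condition~(2) ($\alpha+2\beta\le2/5$); the same applies to $\theta_i^h-\theta_i^k$. Finally, from $\beta_j=2\gamma_0-2\gamma_j$ we get $\beta_{C,k}=2(\gamma_k-\gamma_h)\in\mathcal C^3([0,1])$ by \cref{a3}, so its local linear bias is $\bigO(b_k^2)$ and is killed by the undersmoothing part $nb_k^5\log n\to0$ of Condition~(3).

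\emph{Checking the driving process.} It remains to verify that $C^\top\widetilde H_k(t,\mathcal F_i)=H_h(t,\mathcal F_i)-H_k(t,\mathcal F_i)$ can play the role of $H_k$ in \cref{thm2}. Its physical dependence measure satisfies $\delta_4(C^\top\widetilde H_k,l)\le\delta_4(H_h,l)+\delta_4(H_k,l)$, which by \cref{lemma2} is $\bigO(l^{-2})$ for small $l$ and still decays polynomially (hence is summable) for all $l$; the moment and stochastic Lipschitz bounds of \cref{lemma3} hold for both lags and therefore for the difference; and $C^\top\widetilde H_k$ has long-run variance $\sigma_{C,k}^2(t)=C^\top\Sigma_k^2(t)C$ by \cref{sigma}, which is bounded away from zero by the standing hypothesis of the corollary, while its Lipschitz continuity in $t$ (needed to invoke \cref{thm2}) follows from the stochastic Lipschitz bound of \cref{lemma3} together with the summable physical dependence of \cref{lemma2}, each lag-$j$ covariance being Lipschitz in $t$ and the tail in $j$ decaying fast enough. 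With Conditions~(2)--(3) assumed for $b=b_k$, applying \cref{thm2} to $\{\rho_i^h-\rho_i^k\}$ under the substitution $(\beta_k,H_k,\sigma_k,b)\mapsto(\beta_{C,k},C^\top\widetilde H_k,\sigma_{C,k},b_k)$ gives part~(i); this is precisely the linear-combination device of Theorem~3 in \cite{ZW2010}.

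\emph{From (i) to (ii), and the main obstacle.} Since $\widehat\gamma_k(t)=\tfrac12\widehat\beta_{C,k}(t)$ while $\gamma_k(t)=\tfrac12\beta_{C,k}(t)+\gamma_h(t)$, one has $\widehat\gamma_k(t)-\gamma_k(t)=\tfrac12\{\widehat\beta_{C,k}(t)-\beta_{C,k}(t)\}-\gamma_h(t)$; multiplying by $\sqrt{4nb_k/\phi_0}\,\sigma_{C,k}^{-1}(t)=2\sqrt{nb_k/\phi_0}\,\sigma_{C,k}^{-1}(t)$ turns the first term into the statistic of part~(i), while the residual is at most $2\sqrt{nb_k/\phi_0}\,\sup_t\sigma_{C,k}^{-1}(t)\sup_t|\gamma_h(t)|=\bigO(\sqrt{nb_k}\,h^{-2})=o(1)$ by \cref{lemma1}, $\sigma_{C,k}$ being bounded below and $h$ being chosen large enough that $\gamma_h$ sits below the width of the band; Slutsky's lemma then delivers (ii) with the same Gumbel limit. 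The passage (i)$\Rightarrow$(ii) and the error bookkeeping in the reduction are routine; the substantive part is the middle step, namely confirming that every ingredient of the proof of \cref{thm2} — notably the strong (Gaussian) approximation of the dependent partial sums of $C^\top\widetilde H_k$ and the extreme-value analysis of the ensuing supremum — carries over unchanged to the composite noise $H_h-H_k$, and especially that its long-run variance $\sigma_{C,k}^2(\cdot)$ is Lipschitz and non-degenerate, which is where the cross-lag covariance terms entering $\Sigma_k^2$ must be kept under control.
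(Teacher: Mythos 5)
Your proposal is correct and follows essentially the same route as the paper, whose entire justification for \cref{col2} is the remark that $\widehat{\beta}_{C,k}=\widehat{\beta}_{h,b_k}-\widehat{\beta}_{k,b_k}$ uses a single bandwidth, so that \cref{thm2} applies to the combined local linear smoother of $\{\rho_i^h-\rho_i^k\}$ with driving noise $C^\top\widetilde H_k$ and long-run variance $\sigma_{C,k}^2=C^\top\Sigma_k^2C$, exactly the linear-combination device of Theorem~3 in \cite{ZW2010}. Your treatment is in fact more explicit than the paper's (notably the bookkeeping of the $\gamma_h$ remainder in part~(ii), which the paper silently absorbs into the convention $\gamma_h\approx 0$), and the only minor imprecision is that the perturbation by $\gamma_h$ should be shown to be $o\bigl(1/\sqrt{nb_k\log(m^\ast)}\bigr)$ rather than merely $o\bigl(1/\sqrt{nb_k}\bigr)$ for the Gumbel limit to survive, which your bound comfortably satisfies.
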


\begin{remark}
	It is noteworthy to mention that for estimating $\beta_{C,k}(t)$, we use the same bandwidth $b_k$; therefore, the entire estimator $\widehat{\beta}_{C,k}(t)=\widehat{\beta}_{h,b_k}(t)-\widehat{\beta}_{k,b_k}(t)$ depends on only a single tuning parameter (bandwidth $b_k$). This enables us to achieve the conclusion of \cref{col2}(i) based on the result of \cref{thm2}. As a result, \cref{col2}(ii) also holds true due to this fact.  	
\end{remark}

After constructing SCBs for the second-order structure $\gamma_k(\cdot)$, the following theorem states that $\widehat{\gamma}_k(t)$ are consistent estimators for $\gamma_k(t)$ uniformly in $t$ for all $k=0,...,h-1$.

\begin{theorem}\label{thm1}
	Under Assumptions \ref{a1}-\ref{a5} and suppose conditions 
	$$\alpha+2\beta\le \frac{2}{5},~~~\frac{\log(n)}{n^{3/5-\beta}b}+
	nb^5\log(n)\to 0$$ hold true. Then, we have
	\begin{equation*}
	\sup_{t\in \mathcal{T}} \left|\widehat{\beta}_{k,b}(t)
	-\beta_k(t)\right|=\bigO_{\Pr}(\chi_n),~~k=1,...,h,
	\end{equation*}
	where $\chi_n=b^2+
	\frac{\log(n)}{n^{3/5-\beta}b}+
	\sqrt{\frac{\log(n)}{n^{1-2\beta}b}}+
	\sqrt{\frac{1}{n^{1-\alpha-4\beta}b}}$.
\end{theorem}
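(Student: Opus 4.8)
The plan is to decompose the estimation error $\widehat{\beta}_{k,b}(t)-\beta_k(t)$ according to the three components of $\rho_i^k$ in \cref{e6}, namely $\beta_k(s_i)+\eps_i^k+\lambda_i^k+\theta_i^k$, and to bound the contribution of each piece uniformly in $t\in\mathcal{T}$ and $k=1,\dots,h$. Using the representation \cref{e8}, we write
\begin{equation*}
\widehat{\beta}_{k,b}(t)-\beta_k(t)=\underbrace{\sum_{i}\omega_n^b(t,i)\big[\beta_k(s_i)-\beta_k(t)\big]}_{\text{bias}}+\underbrace{\sum_{i}\omega_n^b(t,i)\eps_i^k}_{\text{stochastic}}+\underbrace{\sum_{i}\omega_n^b(t,i)\lambda_i^k}_{\text{jump}}+\underbrace{\sum_{i}\omega_n^b(t,i)\theta_i^k}_{\text{cross}}.
\end{equation*}
First I would handle the bias term: since $\beta_k=2\gamma_0-2\gamma_k$ and $\gamma_k\in\mathcal{C}^3([0,1])$ by \cref{a3}, a standard local-linear Taylor expansion (the weights $\omega_n^b$ reproducing linear functions and $S_j^b$ being controlled via \cref{a5}) gives a uniform $O(b^2)$ bound, with the $O(b^2)$ uniform over $k$ because the relevant derivative bounds on $\gamma_k$ inherit the $O(k^{-2})$-type decay from \cref{lemma1} (so in particular are uniformly bounded). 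This yields the $b^2$ in $\chi_n$.

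Next, for the stochastic term $\sum_i\omega_n^b(t,i)\eps_i^k$, I would invoke the dependence structure established in \cref{lemma2} and \cref{lemma3}: $\{\eps_i^k\}$ has a physical dependence measure $\delta_4(H_k,l)=O(l^{-2})$ up to a shift, hence a summable dependence measure uniformly in $k$. A maximal-inequality / chaining argument (or a direct Gaussian-approximation estimate in the spirit of the Theorem \ref{thm2} machinery, but here only a rate is needed) over the $O(n)$ grid of $t$-values, combined with the $L^4$ moment bound $\sup_t\|H_k(t,\mathcal F_i)\|_4<\infty$ from \cref{lemma3}, gives a uniform bound of order $\sqrt{\log(n)/(nb)}$ for the zero-mean locally stationary part. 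The extra $n^{2\beta}$ in the denominator, i.e. the term $\sqrt{\log(n)/(n^{1-2\beta}b)}$, must come from the fact that $\eps_i^k=\alpha_i^k-\beta_k(s_i)$ where $\alpha_i^k=(x_i-x_{i-k})^2$; near change points the centering is around the ``no-jump'' trend and the variance of the relevant fluctuations is inflated by factors up to $\Delta_n^2=O(n^{2\beta})$ — I would isolate the $O(d_n)$ indices $i$ straddling a change point, where $\lambda_i^k$ and $\theta_i^k$ are nonzero, and bound those separately.

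For the jump term $\sum_i\omega_n^b(t,i)\lambda_i^k$: $\lambda_i^k=(\mu_i-\mu_{i-k})^2$ is nonzero ($\le\Delta_n^2$) only for the at most $d_n k\le d_n h$ indices where a change point falls between $i-k$ and $i$; since each weight is $O(1/(nb))$, this contributes $O(d_n h\,\Delta_n^2/(nb))$, and recalling $h=O(n^{1/4}\log n)$, $d_n=O(n^\alpha)$, $\Delta_n^2=O(n^{2\beta})$, one checks under $\alpha+2\beta\le 2/5$ that this is dominated by (or matched to) the $\log(n)/(n^{3/5-\beta}b)$ term — this bookkeeping is where condition (2) of \cref{thm2} enters. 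The cross term $\theta_i^k=2(x_i-x_{i-k})(\mu_i-\mu_{i-k})$ is likewise supported on those $O(d_nh)$ indices; it is zero-mean conditionally on nothing but has a $\Delta_n$ factor, and a second-moment calculation on this short sum of weakly dependent terms yields the last piece $\sqrt{1/(n^{1-\alpha-4\beta}b)}$ after accounting for the $d_n$ change points each contributing a block of length $O(k)$ and magnitude $O(\Delta_n)$ times a locally stationary increment. I would make this precise via a block decomposition and Rosenthal/Burkholder-type inequality for physical-dependence processes.

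The main obstacle I anticipate is the careful simultaneous control of the last two (jump and cross) terms uniformly over $t\in\mathcal T$ \emph{and} over all $k=1,\dots,h$: one must track how many of the $O(d_n)$ change points can fall inside a single kernel window of width $2nb$, how the lag $k$ (up to $h$) multiplies the count of ``contaminated'' indices, and how the jump size $\Delta_n$ propagates — getting the exponents $3/5-\beta$, $1-2\beta$, and $1-\alpha-4\beta$ exactly right in $\chi_n$ requires balancing these against the undersmoothing condition (3). Everything else (the bias expansion and the clean stochastic term) is routine local-linear/locally-stationary analysis of the type already used for \cref{thm2}.
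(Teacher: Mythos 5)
Your four-term decomposition of $\widehat{\beta}_{k,b}(t)-\beta_k(t)$ into bias, stochastic ($\eps_i^k$), jump ($\lambda_i^k$) and cross ($\theta_i^k$) pieces is exactly the paper's ($I+II+III+IV$ in its proof), and your treatment of the bias ($O(b^2)$) and of the clean stochastic term via chaining over an $O(n)$ grid together with the Gaussian approximation of \cref{thm3} (giving $\sqrt{\log n/(nb)}+\log n/(n^{3/5}b)$) matches the paper. However, the way you assign the three remaining pieces of $\chi_n$ to the three remaining terms is scrambled, and the derivations you sketch do not go through. First, $\eps_i^k=(x_i-x_{i-k})^2-\beta_k(s_i)$ is built from the noise process $G$ alone (see \cref{e4}); it does not involve $\mu$ at all, so its fluctuations cannot be ``inflated by $\Delta_n^2$ near change points.'' The term $\sqrt{\log(n)/(n^{1-2\beta}b)}$, together with $\log(n)/(n^{3/5-\beta}b)$, in fact comes from the \emph{cross} term $\theta_i^k=2(x_i-x_{i-k})(\mu_i-\mu_{i-k})$: the paper runs the same chaining-plus-Gaussian-approximation argument as for $II$, picking up one extra factor $\Delta_n=O(n^{\beta})$.

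Second, for the jump term the paper uses Cauchy--Schwarz, $|\langle\Omega,\Lambda_k\rangle|\le\Vert\Omega\Vert_2\Vert\Lambda_k\Vert_2$ with $\Vert\Omega\Vert_2=O(1/\sqrt{nb})$ and $\Lambda_k$ having $O(kd)$ entries of size $O(n^{2\beta})$, which is precisely what produces $\sqrt{1/(n^{1-\alpha-4\beta}b)}$ (see \cref{eq3}). Your $\Vert\Omega\Vert_\infty$-times-count bound $O(d_nh\Delta_n^2/(nb))=O(\log n/(n^{3/4-\alpha-2\beta}b))$ is \emph{not} dominated by $\log(n)/(n^{3/5-\beta}b)$ under $\alpha+2\beta\le 2/5$: domination would require roughly $\alpha+\beta\le 3/20$, which the stated conditions do not imply (take $\alpha=2/5$, $\beta=0$). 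So the bookkeeping you flag as the main obstacle and defer actually fails along the route you propose. Likewise, a second-moment/Rosenthal bound on the cross term over the $O(kd)$ contaminated indices gives a standard deviation of order $\sqrt{kd}\,\Delta_n/(nb)$, which is not $\sqrt{1/(n^{1-\alpha-4\beta}b)}$; that piece of $\chi_n$ belongs to the deterministic jump term, not to the cross term. To repair the argument, keep your decomposition but (i) bound $III$ by Cauchy--Schwarz as above, and (ii) bound $IV$ by factoring out $\Delta_n$ and reusing the chaining/Gaussian-approximation bound you already set up for $II$.
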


This theorem implies the uniform consistency of $\widehat{\beta}_{k,b}(\cdot).$ Additionally, due to the relationship between $\beta_k$ and $\gamma_k,$ we can also easily obtain the following consistency result for $\widehat\gamma_k(\cdot).$

\begin{corollary}
	With the conditions in \cref{thm1}, we have 
	\begin{equation*}
	\sup_{t\in\mathcal{T}}\left|\widehat{\gamma}_k(t)-\gamma_k(t)
	\right|=\bigO_{\Pr}(\chi_n),~~k=0,...,h-1.
	\end{equation*}
\end{corollary}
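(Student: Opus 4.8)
The plan is to obtain the corollary as a direct consequence of \cref{thm1}, using the linear relations that tie $\gamma_0,\gamma_k$ to the trend functions $\beta_h,\beta_k$. From the decomposition \eqref{e4} and the fact that $\eps_i^k$ is zero-mean, $\beta_k$ is pinned down on the design grid by $\beta_k(s_i)=\EE[(x_i-x_{i-k})^2]=\gamma_0(s_i+k/n)+\gamma_0(s_i)-2\gamma_k(s_i)$, so that $\beta_k(t)=2\gamma_0(t)-2\gamma_k(t)+r_{n,k}(t)$ with $\sup_t|r_{n,k}(t)|=\bigO(h/n)$, because $\gamma_0\in\mathcal{C}^3$ by \cref{a3} and $k\le h=\bigO(n^{1/4}\log n)$. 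Solving these relations yields, uniformly in $t$,
\[
\gamma_0(t)=\tfrac12\beta_h(t)+\gamma_h(t)+\bigO(h/n),\qquad
\gamma_k(t)=\tfrac12\{\beta_h(t)-\beta_k(t)\}+\gamma_h(t)+\bigO(h/n),\quad k=1,\dots,h-1.
\]
Comparing with the definitions $\widehat\gamma_0=\tfrac12\widehat\beta_{h,b_h}$ and $\widehat\gamma_k=\tfrac12(\widehat\beta_{h,b_k}-\widehat\beta_{k,b_k})$, we obtain for $k=1,\dots,h-1$
\[
\widehat\gamma_k(t)-\gamma_k(t)=\tfrac12\{\widehat\beta_{h,b_k}(t)-\beta_h(t)\}-\tfrac12\{\widehat\beta_{k,b_k}(t)-\beta_k(t)\}-\gamma_h(t)+\bigO(h/n),
\]
and $\widehat\gamma_0(t)-\gamma_0(t)=\tfrac12\{\widehat\beta_{h,b_h}(t)-\beta_h(t)\}-\gamma_h(t)+\bigO(h/n)$.

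The next step is to verify that the two deterministic remainder terms are $o(\chi_n)$. By \cref{lemma1}, $\sup_t|\gamma_h(t)|=\bigO(h^{-2})=\bigO(n^{-1/2}(\log n)^{-2})$, and $h/n=\bigO(n^{-3/4}\log n)$. Since $\beta\ge0$ gives $\chi_n\ge\sqrt{\log(n)/(n^{1-2\beta}b)}\ge\sqrt{\log(n)/(nb)}$ and $b\to0$, a routine order comparison shows $h^{-2}=\sqrt{b}\,(\log n)^{-5/2}\cdot\sqrt{\log(n)/(nb)}=o(\chi_n)$ and, similarly, $h/n=o(\chi_n)$. Hence both remainders can be absorbed into $\bigO_{\Pr}(\chi_n)$.

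Finally I would invoke \cref{thm1}. For each fixed $k$ the two components of $\widehat\gamma_k$ share the single bandwidth $b_k$, which (under the conditions assumed here, identical to those of \cref{thm1}) satisfies all its requirements; applying \cref{thm1} to $\sup_t|\widehat\beta_{h,b_k}(t)-\beta_h(t)|$ and to $\sup_t|\widehat\beta_{k,b_k}(t)-\beta_k(t)|$ gives $\bigO_{\Pr}(\chi_n)$ for each, and the triangle inequality together with the previous paragraph delivers $\sup_{t\in\mathcal{T}}|\widehat\gamma_k(t)-\gamma_k(t)|=\bigO_{\Pr}(\chi_n)$; the case $k=0$ is identical with $b_h$ in place of $b_k$. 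The only mild point needing attention is that the bound hold uniformly over the lag $k\in\{0,\dots,h-1\}$, but since \cref{thm1} is stated with a rate $\chi_n$ that does not involve $k$, this uniformity is inherited directly and requires no extra argument. There is no genuine obstacle here: the corollary is essentially bookkeeping on top of \cref{thm1} and \cref{lemma1}.
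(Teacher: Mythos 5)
Your proof is correct and takes essentially the same route the paper intends: the corollary is read off from \cref{thm1} by writing $\widehat\gamma_0$ and $\widehat\gamma_k$ as linear combinations of the $\widehat\beta$'s (sharing a single bandwidth per lag), applying the uniform rate to each term, and absorbing the deterministic remainders. Your explicit accounting of the truncation bias $\gamma_h(t)=\bigO(h^{-2})$ and the $\bigO(h/n)$ discretization term is in fact more careful than the paper, which states the corollary without proof as an immediate consequence of \cref{thm1}; these remainders are indeed $o(\chi_n)$ as you verify.
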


\section{Practical implementation}\label{sec4}
\subsection{Selection of the difference lag}\label{sec4.1}
Note that for any fixed time $t$, $\widehat{\beta}_{k,b_k}(t)= 2\widehat{\gamma}_0(t)-2\widehat{\gamma}_k(t)$ and recall that $\gamma_k(t)\approx 0$ when $k\ge h$, where $h$ is a large value that has been chosen in advance. Hence, we know that if $k\ge h$, $\widehat{\beta}_{k,b_k}(t)\approx 2\widehat{\gamma}_0(t)$ is practically invariant with respect to $t$ as $k$ increases. This fact suggests the following bandwidth selection procedure.

First, for any fixed $t$, we choose a large enough value $h_0(t)$ and select $k=h_0(t).$ Next, we  calculate $\widehat{\beta}_{k,b_k}(t)$. Then, by successively decreasing the value of $k$ and considering  $k=h_0(t)-1,h_0(t)-2,...,$ we calculate the corresponding quantities $\widehat{\beta}_{k,b_k}(t)$ until $\widehat{\beta}_{k,b_k}(t)$ shows an abrupt change. At this point, the optimal difference lag for time $t$ can be selected as the current $k$ plus $1.$ Intuitively, we can interpret this through the scatterplot of $(k,\widehat{\beta}_{k,b_k}(t))$. When the slope of the function $\widehat{\beta}_{k,b_k}(t)$ shows an obvious change, then we can choose $h^\ast(t)=k+1$. Following the above procedure for each time point $t_i=i/n,~i=1,...,n$, we finally choose the optimal lag as $h=\sum_{i=1}^n h^\ast(t_i)/n$. 

\subsection{Covariance matrix estimation}\label{sec4.2}
To apply Corollaries \ref{col1} and \ref{col2} (ii), we need to estimate the long-run variance $\Sigma_k^2(\cdot)$ in \cref{sigma} first. This problem is complicated but has been extensively studied by many researchers. Here we adopt the technique considered by \cite{ZW2010}.

Let $Q_i^k=\sum_{j=-m}^m \tilde{\eps}_{i+j},~i=1,...,n$, where $\tilde{\eps}_i=(\eps_i^h,\eps_i^k)^\top$ for $i=1,...,n$. Notice that $\EE(\tilde{\eps}_i)=0$ and denote $N_i^k:=Q_i^kQ_i^{k\top}/(2m+1)$. In the locally stationary case, we can make use of the fact that a block of $\{\tilde{\eps}_i\}$ is approximately stationary when its length is small compared with $n$. Hence, $\EE(N_i^k)\approx \Sigma_k^2(t_i)$ as $m\to \infty$ and $m/n\to 0$. Let $\tau=\tau_n$ be the bandwidth and define the covariance matrix estimator as
$$\widetilde{\Sigma}_k^2(t)=\sum_{i=1}^n\tilde{\omega}_n^\tau(t,i)N_i^k,~~
\tilde{\omega}_n^\tau(t,i)=\frac{K_\tau(t_i-t)}{\sum_{k=1}^n K_\tau(t_k-t)},$$
	with $\tau$ being the bandwidth. Therefore, the estimate $\widetilde{\Sigma}_k^2(t)$ is guaranteed to be positive semidefinite. The following theorems provide consistency of our covariance matrix estimate.

\begin{theorem}\label{thm4}
	Assume that $\Sigma_k^2(t)\in \mathcal{C}^2[0,1],~\delta_4(\widetilde{H}_k,j)=\bigO(\{j\log(j)\}^{-2}),~m=m_n\to \infty,~m=\bigO(n^{1/3}),~\tau\to 0$ and $n\tau\to \infty$. Then, $(i)$ for each $k,~k=1,...,h-1$ and any fixed $t\in (0,1)$, 
	$$\left\Vert\widetilde{\Sigma}_k^2(t)-\Sigma_k^2(t)\right\Vert=\bigO\left(\sqrt{ \frac{m}{n\tau}}+\frac{1}{m}+\tau^2\right),$$
	$(ii)$ for $\mathcal{I}=[\tau,1-\tau]$, 
	$$\left\Vert\sup_{t\in\mathcal{I}}\left|\widetilde{\Sigma}_k^2(t)
	-\Sigma_k^2(t)\right|\right\Vert=\bigO\left(\sqrt{
		\frac{m}{n\tau^2}}+\frac{1}{m}+\tau^2\right).$$
\end{theorem}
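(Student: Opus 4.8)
The plan is to write
$\widetilde{\Sigma}_k^2(t)-\Sigma_k^2(t)=\big(\EE[\widetilde{\Sigma}_k^2(t)]-\Sigma_k^2(t)\big)+\sum_{i=1}^n\tilde{\omega}_n^\tau(t,i)\big(N_i^k-\EE N_i^k\big)$
and bound the deterministic bias term and the stochastic term separately, first pointwise (for part (i)) and then uniformly over $t\in\mathcal{I}$ (for part (ii)). The argument follows the template used for the long-run covariance estimator in \cite{ZW2010}, adapted to the bivariate process $\widetilde{H}_k=(H_h,H_k)^\top$ and to the difference-based errors $\tilde{\eps}_i=(\eps_i^h,\eps_i^k)^\top$, and it relies on Lemmas \ref{lemma2}--\ref{lemma3} together with the strengthened decay condition $\delta_4(\widetilde{H}_k,j)=\bigO(\{j\log j\}^{-2})$.

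For the bias I would expand $\EE[N_i^k]=(2m+1)^{-1}\sum_{j_1,j_2=-m}^m\EE[\tilde{\eps}_{i+j_1}\tilde{\eps}_{i+j_2}^\top]$ and, using the (bivariate analogue of the) stochastic Lipschitz continuity of Lemma \ref{lemma3}, replace each $\EE[\tilde{\eps}_{i+j_1}\tilde{\eps}_{i+j_2}^\top]$ by $\gamma_k^{(C)}(t_i,j_1-j_2):={\rm Cov}(\widetilde{H}_k(t_i,\mathcal{F}_0),\widetilde{H}_k(t_i,\mathcal{F}_{j_1-j_2}))$ at a cost of $\bigO(m/n)$ per term, i.e.\ $\bigO(m^2/n)$ after averaging. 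The partial sum $\sum_{|l|\le 2m}(1-|l|/(2m+1))\gamma_k^{(C)}(t_i,l)$ then differs from $\Sigma_k^2(t_i)=\sum_{l\in\mathbb{Z}}\gamma_k^{(C)}(t_i,l)$ by at most $(2m+1)^{-1}\sum_l|l|\,|\gamma_k^{(C)}(t_i,l)|+\sum_{|l|>2m}|\gamma_k^{(C)}(t_i,l)|$; both are $\bigO(1/m)$, because $\delta_4(\widetilde{H}_k,j)=\bigO(\{j\log j\}^{-2})$ forces $|\gamma_k^{(C)}(t,l)|=\bigO(\{l\log l\}^{-2})$ (via the projection bound $|{\rm Cov}(\widetilde{H}_k(t,\mathcal{F}_0),\widetilde{H}_k(t,\mathcal{F}_l))|\le\sum_{r\ge0}\delta_2(r)\delta_2(r+|l|)$), hence $\sum_l|l|\,|\gamma_k^{(C)}(t,l)|<\infty$---this is exactly where the extra logarithmic factor is needed, since $\bigO(l^{-2})$ alone would leave a residual $\log m$. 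The remaining piece $\sum_i\tilde{\omega}_n^\tau(t,i)(\Sigma_k^2(t_i)-\Sigma_k^2(t))$ is the usual kernel-smoothing bias, which is $\bigO(\tau^2)+\bigO(1/(n\tau))$ by $\Sigma_k^2\in\mathcal{C}^2[0,1]$, the symmetry and compact support of $K$, and a Riemann-sum estimate. Since $m=\bigO(n^{1/3})$ gives $m^2/n=\bigO(1/m)$ and $n\tau\to\infty$ gives $1/(n\tau)=\bigO(\sqrt{m/(n\tau)})=\bigO(\sqrt{m/(n\tau^2)})$, the total bias is $\bigO(1/m+\tau^2)$ uniformly in $t\in\mathcal{I}$.

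For the stochastic term, pointwise I would compute $\var(\widetilde{\Sigma}_k^2(t))=\sum_{i,i'}\tilde{\omega}_n^\tau(t,i)\tilde{\omega}_n^\tau(t,i'){\rm Cov}(N_i^k,N_{i'}^k)$. Using $\tilde{\omega}_n^\tau(t,i)=\bigO(1/(n\tau))$ supported on a window of $\bigO(n\tau)$ indices, it suffices to show $\sum_{i'}|{\rm Cov}(N_i^k,N_{i'}^k)|=\bigO(m)$: for $|i-i'|\le 2m$ one uses Cauchy--Schwarz together with $\|N_i^k\|_2=\|Q_i^k\|_4^2/(2m+1)=\bigO(1)$ (from $\|Q_i^k\|_4=\bigO(\sqrt m)$ by a Rosenthal-type inequality for sums of $\mathcal{L}^4$ physical-dependence processes), while for $|i-i'|>2m$ one bounds the covariance of the two block quadratic forms through the functional dependence measure of the blocked process $\{N_i^k\}$, which inherits $\bigO(\sqrt m\{l\log l\}^{-2})$-type decay from $\widetilde{H}_k$ and therefore has a tail of total mass $\bigO(\sqrt m)$. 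This gives $\var(\widetilde{\Sigma}_k^2(t))=\bigO(m/(n\tau))$ and, with the bias bound, proves (i). For (ii) I would pass to a grid of $\bigO(1/\tau)$ points on $\mathcal{I}$, control the oscillation between grid points via the Lipschitz-in-$t$ estimate $|\tilde{\omega}_n^\tau(t,i)-\tilde{\omega}_n^\tau(s,i)|=\bigO(|t-s|/(n\tau^2))$, and bound the maximum over the grid by the second-moment estimate just obtained, yielding $\EE[\sup_{t\in\mathcal{I}}|\widetilde{\Sigma}_k^2(t)-\EE\widetilde{\Sigma}_k^2(t)|^2]=\bigO((1/\tau)\cdot m/(n\tau))=\bigO(m/(n\tau^2))$; combining with the uniform bias bound gives (ii).

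The step I expect to be the main obstacle is the uniform control of the stochastic term in (ii): one needs a maximal inequality for the smoothed process $t\mapsto\sum_i\tilde{\omega}_n^\tau(t,i)(N_i^k-\EE N_i^k)$ with the sharp rate, which requires (a) uniform-in-$i$ $\mathcal{L}^p$ bounds on the block quadratic functionals $N_i^k$ and (b) careful bookkeeping of the shifted design points $t_{i+j}$, $|j|\le m$, inside each block, so that $\{N_i^k\}$ genuinely inherits a usable functional dependence measure from $\widetilde{H}_k$---without which neither the covariance-tail estimate $\sum_{i'}|{\rm Cov}(N_i^k,N_{i'}^k)|=\bigO(m)$ nor the grid argument goes through. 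Relatedly, the covariance-of-quadratic-forms bound for blocks separated by more than $2m$ is the one place where the strengthened decay $\delta_4(\widetilde{H}_k,j)=\bigO(\{j\log j\}^{-2})$ is genuinely used, the logarithm being precisely what makes the relevant tail series converge.
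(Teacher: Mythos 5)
Your proposal is correct and follows essentially the same route as the paper, whose proof of this theorem is simply a deferral to Theorem 4 of Zhou and Wu (2010): the bias/variance decomposition of the blocked estimator, the use of the strengthened decay $\delta_4(\widetilde{H}_k,j)=\bigO(\{j\log j\}^{-2})$ to make $\sum_l |l|\,|\gamma_k^{(C)}(t,l)|$ converge, and the loss of a factor $1/\tau$ in passing from the pointwise to the uniform second-moment bound are exactly the ingredients of that argument, here adapted to the bivariate process $\widetilde{H}_k$. Your reconstruction is in fact more explicit than what the paper provides.
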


In practice, the errors $\tilde{\eps}_j$ cannot be observed, thus we use $\widehat{\Sigma}_k^2(t)=\sum_{i=1}^n\widetilde\omega_n^\tau(t,i)\widehat{N}_i^k$, where $\widehat{N}_i^k$ is defined as $N_i^k$ with $\tilde{\eps}_j$ therein replaced by its estimator $\hat{\eps}_j$.

\begin{theorem}\label{thm5}
	Assume that conditions of \cref{thm1} and conditions of \cref{thm4} hold. Denote $\nu_n=\sqrt{m\log(n)}\chi_n$, where $\chi_n$ is defined in \cref{thm1} and further assume $\nu_n\to 0$. Then 
	$$\sup_{t\in\mathcal{I}}\left|\widehat{\Sigma}_k^2(t)
	-\Sigma_k^2(t)\right|=\bigO_{\Pr}\left(\nu_n+\sqrt{
		\frac{m}{n\tau^2}}+\frac{1}{m}+\tau^2\right).$$
\end{theorem}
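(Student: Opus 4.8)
The plan is to compare $\widehat{\Sigma}_k^2$ with the infeasible estimator $\widetilde{\Sigma}_k^2$ built from the true errors and split, by the triangle inequality,
\[
\sup_{t\in\mathcal{I}}\big\|\widehat{\Sigma}_k^2(t)-\Sigma_k^2(t)\big\|\le\sup_{t\in\mathcal{I}}\big\|\widehat{\Sigma}_k^2(t)-\widetilde{\Sigma}_k^2(t)\big\|+\sup_{t\in\mathcal{I}}\big\|\widetilde{\Sigma}_k^2(t)-\Sigma_k^2(t)\big\|.
\]
The second term is $\bigO_{\Pr}\big(\sqrt{m/(n\tau^2)}+1/m+\tau^2\big)$ by \cref{thm4}(ii), since the $L^1$ bound established there implies the stated in-probability bound via Markov's inequality. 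Everything therefore reduces to proving that the first term, the cost of plugging in the estimated residuals, is $\bigO_{\Pr}(\nu_n)$.

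For the plug-in term I would expand the residual error coordinatewise: with $\hat{\eps}^k_i=\rho_i^k-\widehat{\beta}_{k,b_k}(t_i)$, \cref{e6} gives $\hat{\eps}^k_i-\eps^k_i=\big(\beta_k(t_i)-\widehat{\beta}_{k,b_k}(t_i)\big)+\lambda^k_i+\theta^k_i$, and analogously for lag $h$, so the block sums satisfy $\widehat{Q}^k_i-Q^k_i=\sum_{j=-m}^m(\hat{\tilde\eps}_{i+j}-\tilde\eps_{i+j})=:-S_i+J_i$, where $S_i$ collects the smooth estimation errors and $J_i$ collects the jump-contamination terms. Using the exact identity $\widehat{Q}\widehat{Q}^\top-QQ^\top=(\widehat{Q}-Q)Q^\top+Q(\widehat{Q}-Q)^\top+(\widehat{Q}-Q)(\widehat{Q}-Q)^\top$, one obtains
\[
\widehat{N}^k_i-N^k_i=\frac{1}{2m+1}\big[(\widehat{Q}^k_i-Q^k_i)Q^{k\top}_i+Q^k_i(\widehat{Q}^k_i-Q^k_i)^\top+(\widehat{Q}^k_i-Q^k_i)(\widehat{Q}^k_i-Q^k_i)^\top\big],
\]
and $\widehat{\Sigma}_k^2(t)-\widetilde{\Sigma}_k^2(t)=\sum_i\tilde\omega^\tau_n(t,i)(\widehat{N}^k_i-N^k_i)$. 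The $S_i$-driven part is the easy one: $\sup_i\|S_i\|\le(2m+1)\big(\sup_t|\widehat\beta_{h,b_k}(t)-\beta_h(t)|+\sup_t|\widehat\beta_{k,b_k}(t)-\beta_k(t)|\big)=\bigO_{\Pr}(m\chi_n)$ by \cref{thm1}, while the kernel-weighted quadratic forms in $Q^k_i$ are handled using $\EE|Q^k_i|^2=\bigO(m)$ together with a concentration bound for the kernel-smoothed, weakly dependent process $\{|Q^k_i|^2\}$, whose dependence measures follow from Lemmas \ref{lemma2}--\ref{lemma3} and the decay rate assumed in \cref{thm4}. This bounds the $S_i$-driven contribution by $\bigO_{\Pr}(\sqrt{m}\,\chi_n+m\chi_n^2)$, which is negligible relative to $\nu_n$ since $\nu_n=\sqrt{m\log n}\,\chi_n\to0$ forces $\sqrt{m}\,\chi_n=\nu_n/\sqrt{\log n}$ and $m\chi_n^2=\nu_n^2/\log n$.

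The decisive piece is the $J_i$-driven part, coming from the jump terms $\lambda^k_i,\theta^k_i$. Here I would use that $\lambda^k_l$ and $\theta^k_l$ vanish unless a change point of $\mu$ lies strictly between indices $l-k$ and $l$, so they are supported on only $\bigO(d_n h)$ of the $n$ indices, with $|\lambda^k_l|\le\Delta_n^2=\bigO(n^{2\beta})$ and $\||\theta^k_l|\|_2=\bigO(\Delta_n)$; combining this sparsity with the two layers of averaging present — the block sum of length $2m+1$ inside $J_i$ and the kernel weights $\tilde\omega^\tau_n(t,i)$ outside — and with $\EE Q^k_i=0$ for the cross terms $J_iQ_i^{k\top}$, one bounds this contribution by $\bigO_{\Pr}(\nu_n)$ under the conditions $\alpha+2\beta\le2/5$, $h=\bigO(n^{1/4}\log n)$, $m=\bigO(n^{1/3})$ and $\nu_n\to0$, following closely the bookkeeping of $\lambda^k_i,\theta^k_i$ in the proof of \cref{thm1}. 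Adding this to the $S_i$-driven part and to the $\widetilde\Sigma_k^2$ bound from \cref{thm4}(ii) yields the claimed rate. I expect this $J_i$ term to be the main obstacle: individual jump contributions are far larger than $\nu_n$, so the argument must exploit the precise interplay among the number and size of the change points $(d_n,\Delta_n)$, the block length $m$, and the bandwidth $\tau$, and it is precisely here that the conditions inherited from \cref{thm1} together with the extra requirement $\nu_n\to0$ become essential; a secondary technical point is the uniform concentration control of the kernel-weighted quadratic forms in $Q^k_i$ under only the moment and polynomial-decay conditions of \cref{a1} and \cref{thm4}.
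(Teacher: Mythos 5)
Your skeleton is exactly the paper's: triangle inequality against the infeasible estimator $\widetilde{\Sigma}_k^2$, \cref{thm4}(ii) for $\widetilde{\Sigma}_k^2-\Sigma_k^2$, the uniform residual bound $\bigO_{\Pr}(\chi_n)$ from \cref{thm1}, the bound $\sup_i|Q_i^k|=\bigO_{\Pr}(\sqrt{m\log n})$ obtained by viewing $Q_i^k/(2m+1)$ as a rectangular-kernel smoother of $\{\tilde\eps_i\}$, and the rank-one expansion of $\widehat N_i^k-N_i^k$ followed by averaging against the weights $\tilde\omega_n^\tau(t,i)$. The products $(m\chi_n)\cdot\sqrt{m\log n}/(2m+1)\asymp\nu_n$ and $(m\chi_n)^2/(2m+1)=o(\nu_n)$ give the paper's $\bigO_{\Pr}(\nu_n)$ plug-in cost; note that in the paper this smooth-estimation-error term \emph{is} the source of the $\nu_n$ rate, not a negligible $o(\nu_n)$ contribution as your accounting (which drops the $\sqrt{\log n}$ in $\sup_i|Q_i^k|$) suggests — this is harmless but worth fixing.

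The one place you genuinely depart from the paper is the jump-contamination term $J_i$ built from $\lambda_i^k,\theta_i^k$ in \cref{e6}. You are right that if $\hat\eps_i^k=\rho_i^k-\widehat\beta_{k,b_k}(t_i)$ then these terms sit inside the residuals; the paper's own proof does not confront this — its equation \cref{e16} asserts $\hat\eps_i-\tilde\eps_i=\widetilde\beta_k(t_i)-\widehat\beta_{k,b_k}(t_i)$, which amounts to implicitly defining the residuals from the unobservable $\alpha_i^k$ rather than from $\rho_i^k$. So identifying $J_i$ is a genuine improvement in rigor. However, your treatment of it is only an assertion: "one bounds this contribution by $\bigO_{\Pr}(\nu_n)$" is precisely the step you call decisive, and it is not carried out. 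The needed computation is not routine — e.g.\ the cross term contributes roughly $\sup_i|Q_i^k|\cdot\max_i\tilde\omega_n^\tau(t,i)\cdot\sum_l|\lambda_l^k+\theta_l^k|/(2m+1)\lesssim\sqrt{m\log n}\,d_n h\Delta_n^2/((2m+1)\,n\tau)$, and the quadratic term $J_iJ_i^\top/(2m+1)$ can be large on the $\bigO(d_n)$ windows that actually contain a change point; whether these reduce to $\bigO_{\Pr}(\nu_n)$ (as opposed to some other rate in $d_n,\Delta_n,h,m,\tau$) requires the explicit bookkeeping you defer to, together with the exponent conditions $\alpha+2\beta\le 2/5$. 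As written, the decisive piece of your argument is a conjecture rather than a proof; either complete that computation or adopt the paper's (weaker) convention for $\hat\eps_i$ and state it explicitly.
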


Note that $\sigma_h^2(t)$ is the first diagonal element of $\Sigma_k^2(t)$ and $\sigma_{C,k}^2(t)=C^\top\Sigma_k^2(t)C$. Thus, the covariance estimates in Corollaries \ref{col1} and \ref{col2} (ii) can be easily calculated via plugging in the long-run covariance matrix estimate $\widehat{\Sigma}_k^2(t)$.

\subsection{Simulation assisted bootstrapping method}
Now we aim to apply \cref{col1} and \cref{col2} (ii) to construct the SCBs. Let $\widehat{\gamma}_0''(t)$ and $\widehat{\gamma}_k''(t)$ be uniformly consistent estimators of $\gamma_0''(t)$ and $\gamma_k''(t)$ for $k=1,...,h-1$, respectively. Then the corresponding $(1-\alpha)$th SCB with $\alpha\in(0,1)$ for $\gamma_0(t)$ and $\gamma_k(t)$ are
\begin{align*}
&\left[\widehat{\gamma}_0(t)\pm \hat{\sigma}_h(t)
\sqrt{\frac{\phi_0}{4nb}}\left(B_K(m^\ast)-
\frac{\log[\log(1-\alpha)^{-1/2}]}
{\sqrt{2\log(m^\ast)}}\right)\right],\\
&\left[\widehat{\gamma}_k(t)\pm \hat{\sigma}_{C,k}(t)\sqrt{\frac{\phi_0}{4nb}}
\left(B_K(m^\ast)-\frac{\log[\log(1-\alpha)^{-1/2}]}
{\sqrt{2\log(m^\ast)}}\right)\right],~k=1,...,h-1.
\end{align*}

Due to the slow rate of convergence to Gumbel distribution, in practice, the UCB from Corollaries \ref{col1} and \ref{col2} (ii) may not have good finite-sample performances. To circumvent this problem, we shall adopt a simulation assisted bootstrapping approach. 

\begin{proposition}\label{prop1}
	Suppose conditions in \cref{thm2} hold and also assume that $\sigma_k(t)$ is Lipschitz continuous for $k=1,...,h$. Then, on a richer probability space, there are i.i.d. standard normal distributed random variables $u_i$ such that
	\begin{align*}
	&\sup_{t\in\mathcal{T}}|\widehat{\gamma}_0(t)-
	\gamma_0(t)-Z_0(t)|=\bigO_{\Pr}(\psi_n),\\
	&\sup_{t\in\mathcal{T}}|\widehat{\gamma}_k(t)-\gamma_k(t)
	-Z_{C,k}(t)|=\bigO_{\Pr}(\psi_n),~k=1,...,h-1,
	\end{align*}
	where $\psi_n=n^{1/2}b^{7/2}+\sqrt{\frac{1}{n^{1-2\alpha-4\beta}b}}
	+\sqrt{\frac{\log(n)}{n^{1-2\alpha-2\beta}b}}+
	\sqrt{\frac{b\log(n)}{n}},~Z_0(t)=\sigma_h(t)\mu_{b_h}^\dagger(t)$ and 
	$Z_{C,k}(t)=\sigma_{C,k}(t)\mu_{b_k}^\dagger(t)$ with $\mu_b^\dagger(t)=\sum_{i=1}^n\omega_n^b(t,i)u_i/2$.
\end{proposition}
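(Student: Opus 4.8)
I would follow the strong Gaussian-coupling route for local linear estimators of locally stationary series, in the spirit of \cite{ZW2010}, adapted to the difference-based targets. Start from $\widehat\gamma_0(t)=\tfrac12\sum_{i=1}^n\omega_n^{b_h}(t,i)\rho_i^h$ and $\widehat\gamma_k(t)=\tfrac12\sum_{i=1}^n\omega_n^{b_k}(t,i)(\rho_i^h-\rho_i^k)$, substitute $\rho_i^k=\beta_k(s_i)+\eps_i^k+\lambda_i^k+\theta_i^k$ and use $\beta_k=2\gamma_0-2\gamma_k$. Since the local linear weights reproduce affine functions, the smooth-trend part yields $\gamma_0(t)$ (resp.\ $\gamma_k(t)$), plus a local linear bias reduced to order $b^2$ by the $\mathcal C^3$ smoothness (\cref{a3}) and the undersmoothing in Condition (3) of \cref{thm2}, plus the truncation error $\gamma_h(t)=\bigO(h^{-2})$ (negligible since $h$ is large). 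The terms $\lambda_i^k,\theta_i^k$ live on the $\bigO(h)$ indices neighbouring the $d=\bigO(n^\alpha)$ change points and carry jumps of size $\bigO(n^\beta)$; their weighted sums are controlled as in the proof of \cref{thm1}, producing the $(n^{1-2\alpha-4\beta}b)^{-1/2}$ and $\{\log(n)/(n^{1-2\alpha-2\beta}b)\}^{1/2}$ terms in $\psi_n$. It then remains to couple the stochastic parts $\mathcal S_0(t):=\tfrac12\sum_i\omega_n^{b_h}(t,i)\eps_i^h$ and $\mathcal S_k(t):=\tfrac12\sum_i\omega_n^{b_k}(t,i)(\eps_i^h-\eps_i^k)$ with Gaussian analogues, uniformly over $t\in\mathcal T$ and over $k=1,\dots,h-1$.

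The core step is the Gaussian coupling. Fix $k$ and consider the $\bR^2$-valued locally stationary process $\widetilde H_k(s_i,\mathcal F_i)=(\eps_i^h,\eps_i^k)^\top$, whose long-run covariance is $\Sigma_k^2(\cdot)$ from \cref{sigma} and which, by Lemmas \ref{lemma2} and \ref{lemma3}, has polynomially decaying physical dependence and is stochastically Lipschitz. On a richer probability space, construct i.i.d.\ $N(0,I_2)$ vectors $\mathbf v_i$ with $\max_{1\le r\le n}\big|\sum_{i=1}^r\widetilde H_k(s_i,\mathcal F_i)-\sum_{i=1}^r\Sigma_k(s_i)\mathbf v_i\big|=o_{\Pr}(n^{1/4}\log n)$, a polynomial rate dictated by the available moments, where $\Sigma_k(\cdot)$ is the symmetric square root of $\Sigma_k^2(\cdot)$; this is the standard strong invariance principle under physical dependence (an $m$-dependent approximation with error governed by $\sum_{l>m}\delta_4(\widetilde H_k,l)$, big-block/small-block splitting, and a Koml\'os--Major--Tusn\'ady/Sakhanenko embedding of the nearly independent block sums). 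Summation by parts then transfers this to the weighted sums — using $|\omega_n^b(t,i+1)-\omega_n^b(t,i)|=\bigO((nb^2)^{-1})$ on a window of $\bigO(nb)$ indices — so that $\sup_{t\in\mathcal T}\big|\sum_i\omega_n^{b_k}(t,i)\widetilde H_k(s_i,\mathcal F_i)-\sum_i\omega_n^{b_k}(t,i)\Sigma_k(s_i)\mathbf v_i\big|=\bigO_{\Pr}(n^{1/2}b^{7/2})$. Projecting onto $C=(1,-1)^\top$ and putting $u_i:=\sigma_{C,k}^{-1}(s_i)\,C^\top\Sigma_k(s_i)\mathbf v_i$ — which is i.i.d.\ $N(0,1)$ because $\var\{C^\top\Sigma_k(s_i)\mathbf v_i\}=C^\top\Sigma_k^2(s_i)C=\sigma_{C,k}^2(s_i)$ — and replacing $\sigma_{C,k}(s_i)$ by $\sigma_{C,k}(t)$ across the window via Lipschitz continuity (cost $\bigO_{\Pr}(\{b\log(n)/n\}^{1/2})$ uniformly), I obtain $\mathcal S_k(t)=\sigma_{C,k}(t)\mu_{b_k}^\dagger(t)+\bigO_{\Pr}(\psi_n)=Z_{C,k}(t)+\bigO_{\Pr}(\psi_n)$. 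The first coordinate of the same coupling (equivalently the scalar $\eps_i^h$ with long-run variance $\sigma_h^2$) gives $\mathcal S_0(t)=Z_0(t)+\bigO_{\Pr}(\psi_n)$. The i.i.d.\ sequence is built separately for the $\widehat\gamma_0$ statement and for each $k$ in the $\widehat\gamma_k$ statement — this is all the bootstrap of \cref{sec4} uses — so the shared symbol $u_i$ is only a notational convenience.

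Assembling the bias, truncation, change-point and coupling errors from the two steps, and checking under Conditions (1)--(3) of \cref{thm2} that each is $\bigO_{\Pr}(\psi_n)$, gives the two claimed estimates; the union bound over $t\in\mathcal T$ and over the $\bigO(n^{1/4}\log n)$ lags contributes the $\log n$ factors. I expect the main obstacle to be the coupling step, specifically producing a strong approximation that is simultaneously (i) uniform in $t$ after summation by parts — sensitive to the regularity of the kernel weights and the source of undersmoothing-type constraints — (ii) valid under only polynomially decaying dependence rather than geometric, and (iii) compatible with nonstationarity, i.e.\ letting the coupling covariance track the slowly varying $\Sigma_k^2(s_i)$ while still yielding, after the $C$-projection, a genuine i.i.d.\ $N(0,1)$ sequence. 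The accompanying bookkeeping — verifying that the exponents in the change-point and coupling remainders (functions of $\alpha$, $\beta$ and $b$) are all dominated inside $\psi_n$ — is the other place where care is needed.
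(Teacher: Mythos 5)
Your plan is correct and is essentially the paper's own argument: the paper's proof of this proposition is a one-line reference to the decomposition established in the proof of \cref{thm2} (bias, change-point terms $\lambda_i^k,\theta_i^k$, and the stochastic term $T_{n,0}^k$) combined with the Gaussian coupling of \cref{thm3} via summation by parts and the Lipschitz replacement of $\sigma_k(t_i)$ by $\sigma_k(t)$ in \cref{e14}, which is exactly the chain you describe. Your explicit treatment of the bivariate coupling of $(\eps_i^h,\eps_i^k)$ and the projection onto $C=(1,-1)^\top$ to produce the i.i.d.\ $N(0,1)$ sequence is a detail the paper leaves implicit (deferring to Theorem 3 of \cite{ZW2010}), but it is the same route, not a different one.
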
  
The proposition implies that the distribution of $\sup_{t\in\mathcal{T}}|\sigma_h^{-1}(t)[\widehat{\gamma}_0(t)-\gamma_0(t)]|$ ($\sup_{t\in\mathcal{T}}|\sigma_{C,k}^{-1}(t)$\\ $[\widehat{\gamma}_k(t)-\gamma_k(t)]|$) can be well approximated by that of $\sup_{t\in \mathcal{T}}|\mu_{b_h}^\dagger(t)|~ (\sup_{t\in \mathcal{T}}|\mu_{b_k}^\dagger(t)|)$, which can be obtained by generating a large number of i.i.d. copies via bootstrapping. Therefore, the above proposition provides us with an alternative way to construct the SCB of the autocovariance function without using the asymptotic Gumbel distribution. 

For ease of application, we combine procedures mentioned above into a convenient sequence of steps below. 
\begin{itemize}
	\item Choose the difference lag order $h$ by using method that is proposed in \cref{sec4.1}.
	\item  Find appropriate bandwidths $b_h$ and $b_k$ for estimating $\beta_h(\cdot),~\beta_h(\cdot)-\beta_k(\cdot)$ respectively, and the bandwidth $\tau$ for estimating $\Sigma_k^2(\cdot)$.
	\item Generate i.i.d. random variables $u_1,u_2,...\sim N(0,1)$ and calculate $\sup_{t\in[b_k,1-b_k]}|\mu_{b_k}^\dagger(t)|$ for $k=1,...,h$.
	\item Repeat the last step for a large number of times (e.g. $10^4$ ) and obtain the estimated $(1-\alpha)$th quantile $\hat{q}_{1-\alpha}^k$ of $\sup_{t\in [b_k,1-b_k]}|\mu_{b_k}^\dagger(t)|$.
	\item Calculate $\widehat{\Sigma}_k^2(t)$ by using the method in \cref{sec4.2}. Then, obtaining $\widehat{\sigma}_h(t)$ together with $\widehat{\sigma}_{C,k}(t)$ is straightforward. 
	\item Construct the $(1-\alpha)$th SCB of the auto-covariance function $\gamma_0(t)$ as $\widehat{\gamma}_0(t)\pm \hat{q}_{1-\alpha}^h\hat{\sigma}_h(t)$, and $\widehat{\gamma}_k(t)\pm \hat{q}_{1-\alpha}^k\widehat{\sigma}_{C,k}(t)$ for $\gamma_k(t),~k=1,...,h-1$.
\end{itemize}

\subsection{Selection of tuning parameters}\label{sec4.4}
In this subsection, we briefly discuss the practical choices of tuning parameters $b,~m$ and $\tau$. Here, we consider the generalized cross-validation (GCV) method by \cite{Craven1978} to choose the bandwidth $b$. Specifically, we consider two cases of bandwidth selection for $\gamma_0(t)$ and $\gamma_k(t),~k=1,...,h-1$, respectively. For estimating  $\gamma_0(t)$, let $P_h=(\rho_1^h,...,\rho_n^h)^\top$ and $\widehat{P}_h(b)=(\hat{\rho}_1^h(b),...,\hat{\rho}_n^h(b))^\top$ be the corresponding fitted values. One can write $\widehat{P}_h(b)=H(b)P_h$, where $H(b)$ is an $n$ by $n$ square hat matrix that depends on $b$. Then, we choose the optimal bandwidth (say $b_h$) that minimizes 
$${\rm GCV}_h(b)=\frac{n^{-1}\sum_{i=1}^n[\rho_i^h-\hat{\rho}_i^h(b)]^2}
{[1-{\rm trace}(H(b))/n]^2}.$$

On the other hand, when estimating $\gamma_k(t)$ for $k=1,...,h-1$, we treat $\widehat{\gamma}_k(t)$ as a whole term and choose a joint bandwidth for it. Similarly, denote $P_k=(\rho_1^h-\rho_1^k,...,\rho_n^h-\rho_n^k)^\top$ and let $\widehat{P}_k(b)=(\hat{\rho}_1^h(b)-\hat{\rho}_1^k(b),...,\hat{\rho}_n^h(b)-
\hat{\rho}_n^k(b))^\top$ be the corresponding fitted values. As before, one can write $\widehat{P}_k(b)=H(b)P_k.$ With this in mind, we select as optimal the bandwidth (say $b_k$) that minimizes the following quantity: 
$${\rm GCV}_k(b)=\frac{n^{-1}\sum_{i=1}^n[(\rho_i^h-\rho_i^k)-(\hat{\rho}_i^h(b)
	-\hat{\rho}_i^k(b))]^2}{[1-{\rm trace}(H(b))/n]^2}.$$

For the choice of $m$ and $\tau$, we now employ the extended minimum volatility method (including two parameters) which was proposed in \cite[Chapter 9]{Politis1999}. This method is based on the fact that if a pair of block size and bandwidth is in an appropriate range, then confidence regions for the local mean constructed by $\widehat{\Sigma}_k^2(t)$ should be stable. Therefore, we first consider a grid of possible block sizes and bandwidths and then choose the optimal pair that minimizes the volatility of the boundary points of the confidence regions in the neighborhood of this pair. To be more specific, let the grid of possible block sizes and bandwidths be $\{m_1,...,m_{M_1}\}$ and $\{\tau_1,...,\tau_{M_2}\}$, respectively. Then denote the estimated long-run covariance matrices as $\{\widehat{\Sigma}_k^2(m_i,\tau_j,t)\}$ for $i=1,...,M_1,~j=1,...,M_2$. For each pair $(m_i,\tau_j)$, we need to calculate 
\begin{equation}\label{ise}
{\rm ISE}\left[\cup_{r=-2}^2\{\widehat{\Sigma}_k^2(m_{i+r},\tau_j,t)\} \cup \cup_{r=-2}^2\{\widehat{\Sigma}_k^2(\tau_{j+r},m_i,t)\}\right],~k=1,...,h,
\end{equation}
where ISE denotes the integrated standard error
$${\rm ISE}[\{\widehat{\Sigma}_k^2(s,\cdot,t)\}_{s=1}^l]=\int_0^1\left\{
\frac{1}{l-1}\sum_{s=1}^l\left\vert\widehat{\Sigma}_k^2(s,\cdot,t)-
\bar{\widehat{\Sigma}}_k^2(\cdot,t)\right\vert^2\right\}^{1/2}\dee t$$
with $\bar{\widehat{\Sigma}}_k^2(\cdot,t)=\sum_{s=1}^l\widehat{\Sigma}_k^2
(s,\cdot,t)/l$ and $s$ being the parameter $m$ or $\tau$. Finally, we choose the pair $(m_i^\ast,\tau_j^\ast)$ that minimizes \cref{ise}.

\section{Simulations}\label{sec_sim}

To illustrate performance of the proposed estimator of autocovariance, we consider several models.  For each model, we obtain the uniform confidence interval coverage of the true variance function and the autocovariance function at lag $1$ for three different sample sizes: $n=400,$ $n=600$ and $n=800.$ In each case, we use $500$ replications. To select bandwidths $b_h$ and $b_k$ we use the grid from $0.15$ to $0.45$ with the step size $0.01.$ We also provide a graphical illustration of a confidence interval enclosing the true variance and autocovariance lag $1$ functions for each of the models considered. 

The first model considered has the errors that are generated by a locally stationary linear process \eqref{loc.stat.linear} with $a_{j}(t)=\left(\frac{t}{2}\right)^{j},$ $j=1,2,\ldots$ while the sequence $(\zeta_{i})$ consists of iid normal random variables with mean zero and variance $1.$ In this case the coefficients start with $j=1$ since otherwise $a_{0}(t)$ is undefined at $t=0.$ The Assumption $1$ is satisfied since $\sup_{t\in [0,1]}a_{j}^{2}(t)\le \left(\frac{1}{4}\right)^{j}$ which is, of course, $O(j^{-2}).$ Assumption $2$ is also satisfied because $\sum_{j=1}^{\infty} \sup_{t\in [0,1]} a_{j}^{2}(t)=\sum_{j=1}^{\infty} \frac{j^{2}}{4^{j}}<\infty.$ Next, the mean function $\mu(t)$ is taken to be a piecewise constant function with six change-points located at fractions $\frac{1}{6}\pm\frac{1}{36},$ $\frac{3}{36}\pm \frac{2}{36},$ and $\frac{5}{6}\pm \frac{3}{36}$ of the sample size $n$. In the first segment, $\mu_{1}\equiv \mu(t_{1})=0,$ in the second it is equal to $1,$ and in the remaining segments $\mu(t)$ alternates between $0$ and $1,$ starting with $0$ in the third segment. This mean function is very similar to the one that has been considered earlier in several other publications; see e.g. \cite{Chakar.etal.16} and \cite{tecuapetla2017autocovariance}. 

The second model we consider has exactly the same error structure as Model $1$ but the mean function is a slightly different one.  In particular, we make the value of the function in the second segment $2$ instead of $1$ while the remaining segments of $\mu(t)$ alternate between $0$ and $1,$ starting with $0$ in the third segment. Since the error process remains the same as before in Model $1,$ Assumptions $1$ and $2$ are satisfied.

The third model we consider is where the errors are generated by a locally stationary MA($2$) process
\[x_{i}=\sum_{j=0}^{2}a_{j}(t)\zeta_{i-j}\] with coefficients being equal to $a_{j}(t)=\frac{(t+0.05)^{j}}{2^{j}},$ $j=0,1,2.$ The sequence $(\zeta_{i})$ consists of iid $N(0,0.3)$ random variables. The locally stationary MA process considered is a special case of the general locally stationary linear process. Since the process consists of the finite number of terms, the stochastic Lipschitz continuity condition in the Assumption $2$ is satisfied automatically. Because $\sup_{t\in [0,1]}a_{j}^{2}(t)\le (0.525)^{2j},$ the Assumption $1$ will also be satisfied. Finally, the mean function stays the same as in the Model $1.$ 

In \cref{table400,table600,table800}, we illustrate coverage probabilities of uniform confidence intervals of the variance function and lag $1$ covariance function for all three of the models considered.  We also consider three possible sample sizes, $n=400,$ $n=600$ and $n=800.$ Note that even a relatively small sample size of $400$ gives excellent coverage probabilities. It is also worthwhile noting that the coverage probabilities are generally higher for lag $1$ autocovariance function than for the variance function. 

\begin{table}[htbp!]
\centering
\begin{tabular}{|c|c|c|c|}
\hline
& Model 1 &Model 2 & Model 3\\
\hline
Variance &0.968 &0.994 & 0.962\\
\hline
Lag $1$ autocovariance &0.994 &0.998 &0.998\\
\hline
\end{tabular}
\caption{Empirical coverage probabilities for all the models when the sample size is 400}
\label{table400}
\end{table}

\begin{table}[htbp!]
\centering
\begin{tabular}{|c|c|c|c|}
\hline
& Model 1 &Model 2 & Model 3\\
\hline
Variance &0.980 &0.996 & 0.962\\
\hline
Lag $1$ autocovariance &0.996 & 1.000&0.998\\
\hline
\end{tabular}
\caption{Empirical coverage probabilities for all the models when the sample size is 600}
\label{table600}
\end{table}

\begin{table}[htbp!]
\centering
\begin{tabular}{|c|c|c|c|}
\hline
& Model 1 &Model 2 & Model 3\\
\hline
Variance &0.996 & 0.970 &0.972 \\
\hline
Lag $1$ autocovariance &0.996 & 0.996 &0.998\\
\hline
\end{tabular}
\caption{Empirical coverage probabilities for all the models when the sample size is 800}
\label{table800}
\end{table}

To illustrate the behavior of uniform confidence intervals for each of the three models considered, we also include sample plots of fitted variance/autocovariance curves with corresponding confidence intervals. For each model, two plots are given: one with the true variance function, its estimate, and a uniform confidence interval for the estimated variance curve, while the other one contains the true lag $1$ autocovariance function, its estimate, and the corresponding uniform confidence interval for the estimated autocovariance curve.  In each of the plots, a solid line is used for the true variance/autocovariance curve, a dashed line for the corresponding estimated curve, and red dotted lines for uniform confidence intervals. 

\begin{figure}[htbp!] 
	\begin{minipage}[t]{0.5\linewidth} 
		\centering 
		\includegraphics[width=7cm,height=5cm]{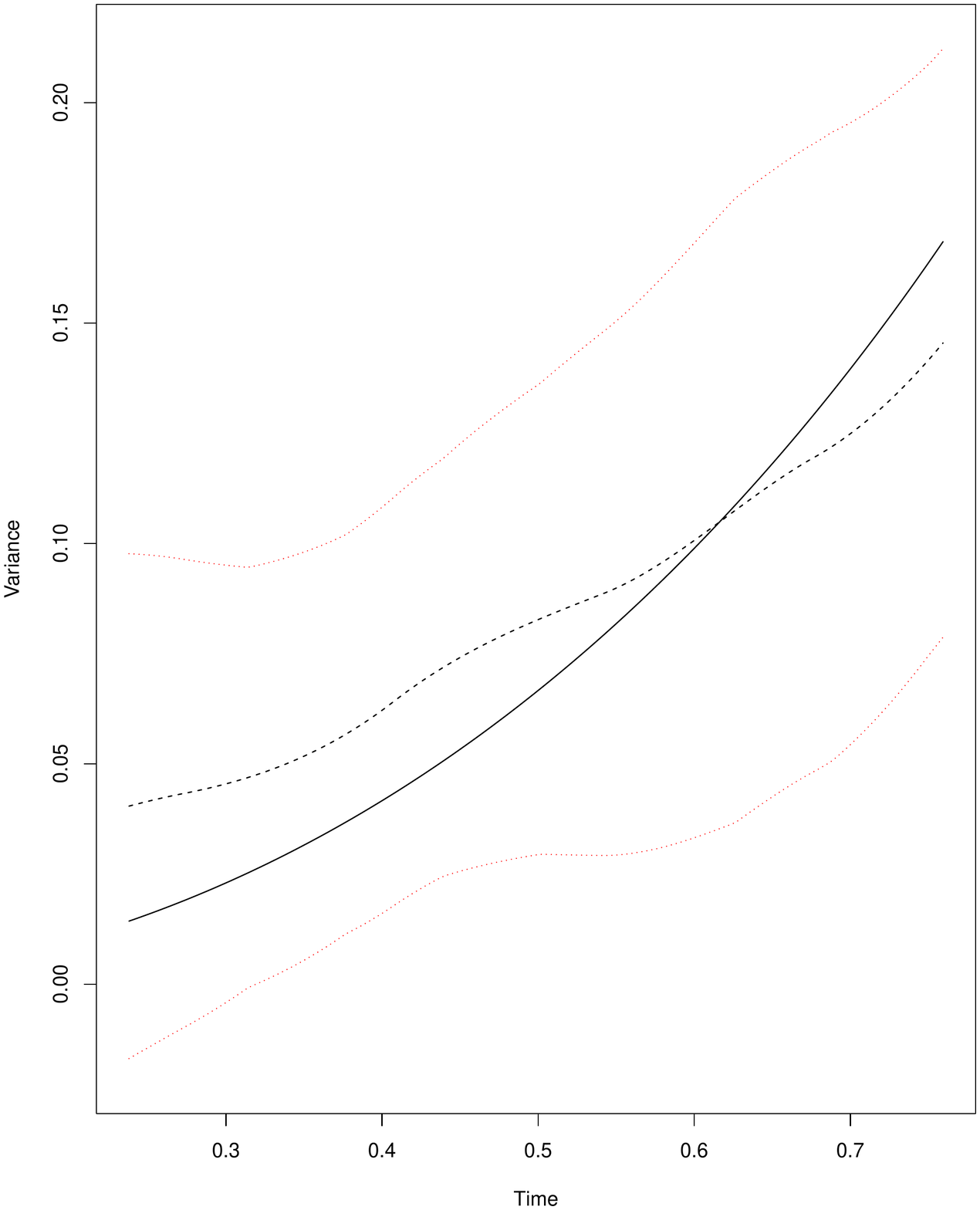}
	\end{minipage}
\label{fig:sub1} 
	\begin{minipage}[t]{0.5\linewidth} 
		\centering 
		\includegraphics[width=7cm,height=5cm]{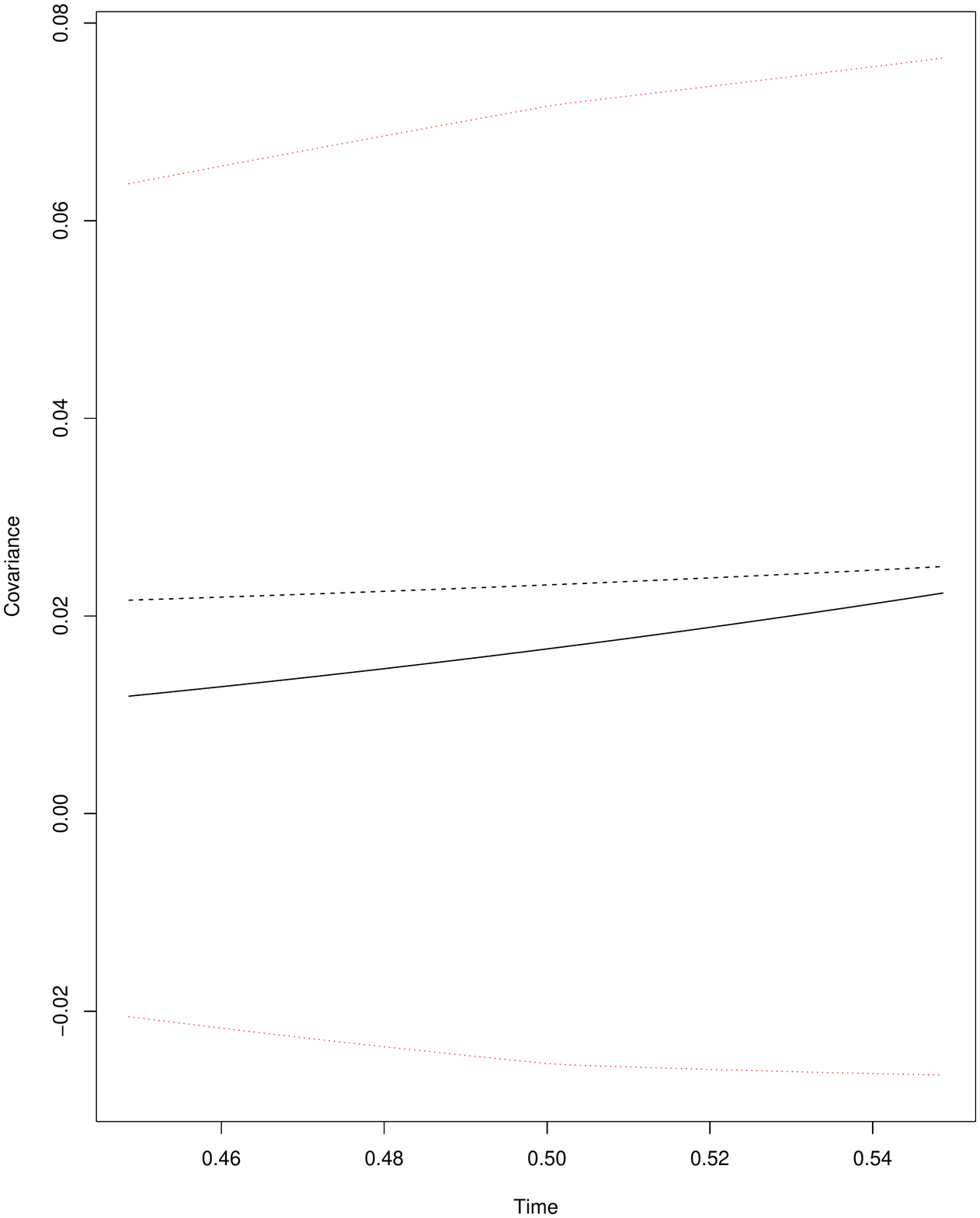} 
	\end{minipage}
\label{fig:sub2} 
\caption{Estimated variance (left) and lag $1$ autocovariance functions (right) for Model $1$}
\label{fig:test}
\end{figure}

\begin{figure}[htbp!] 
	\begin{minipage}[t]{0.5\linewidth} 
		\centering 
		\includegraphics[width=7cm,height=5cm]{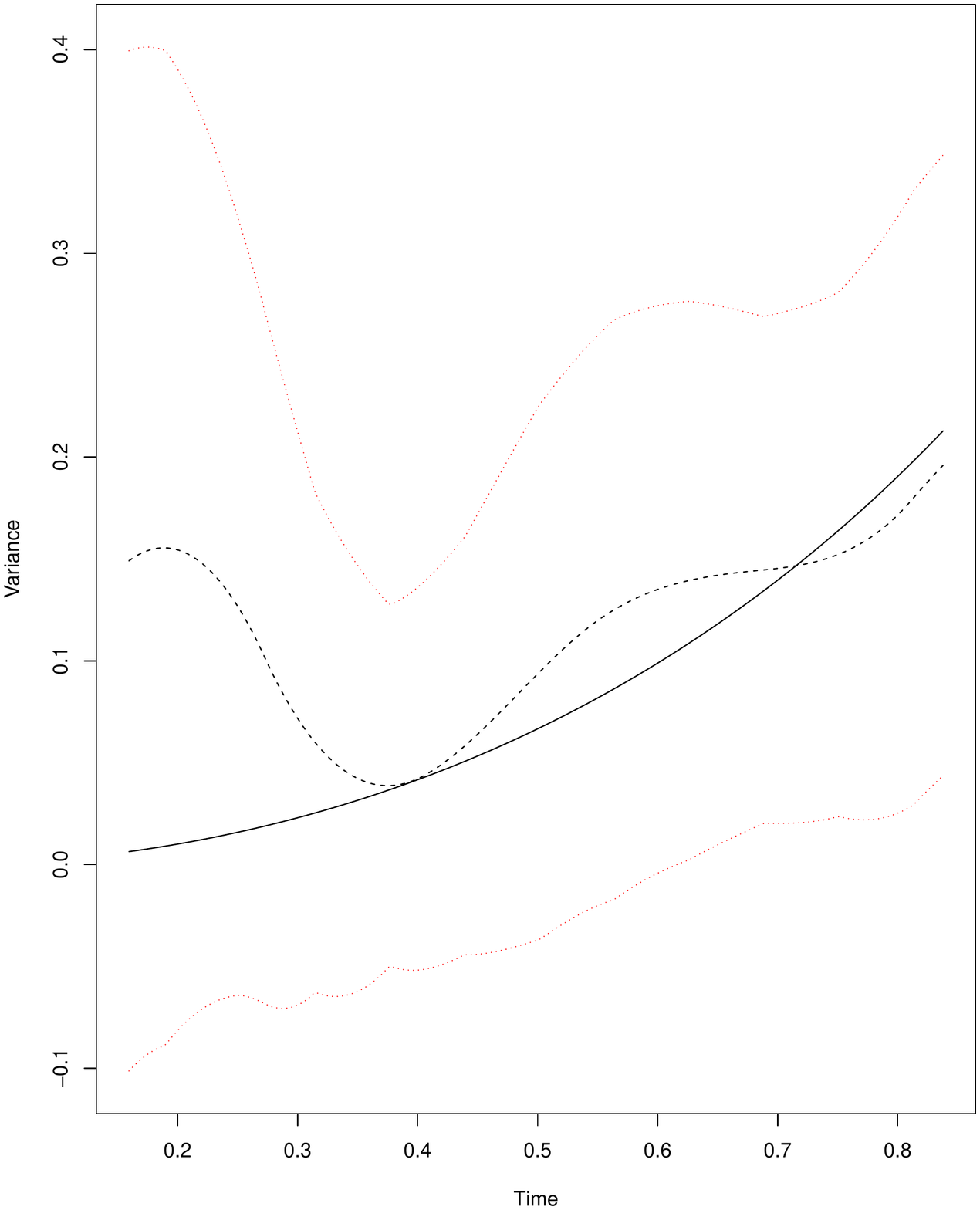}
	\end{minipage}
	\label{fig:sub11} 
	\begin{minipage}[t]{0.5\linewidth} 
		\centering 
		\includegraphics[width=7cm,height=5cm]{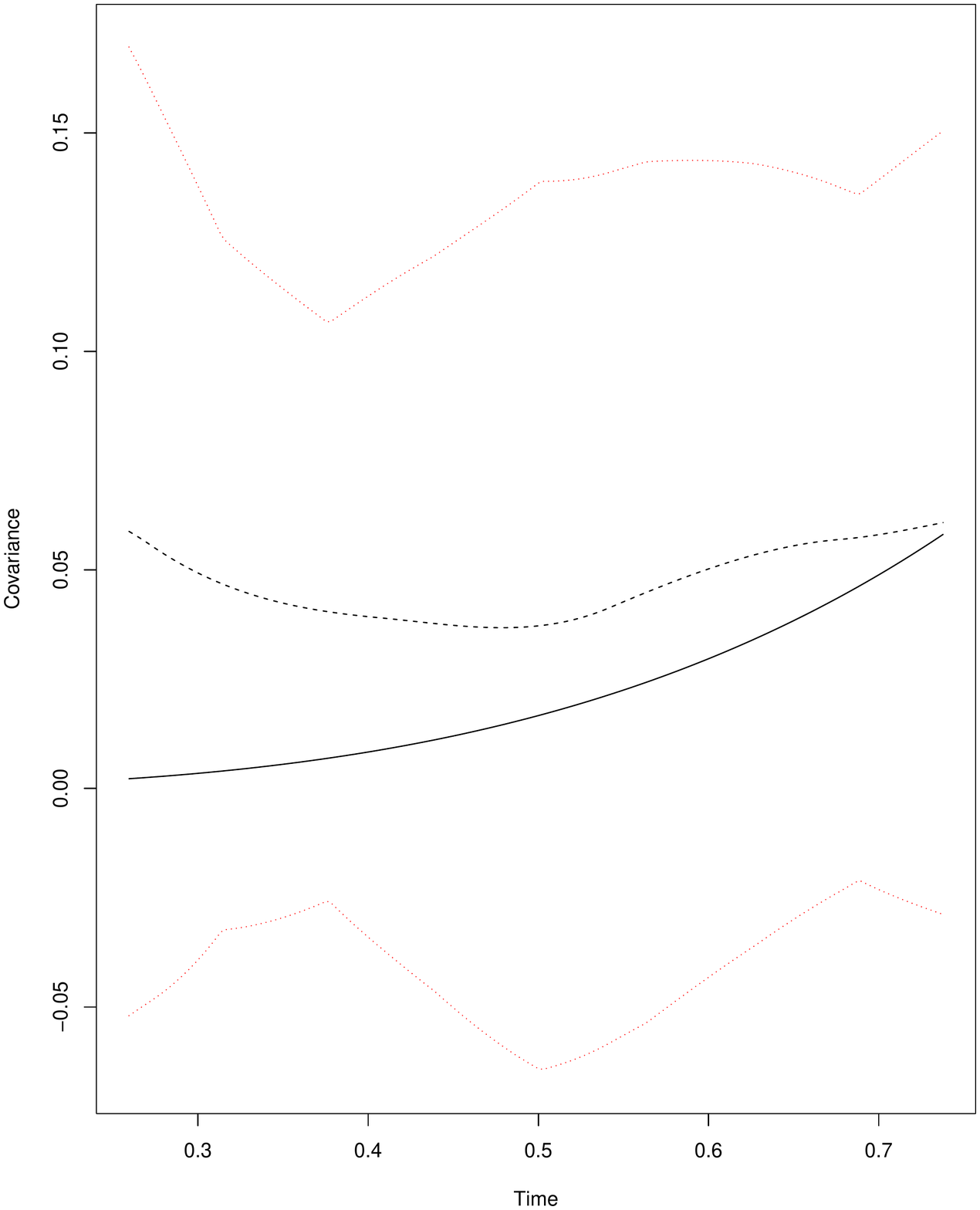} 
	\end{minipage}
	\label{fig:sub12} 
	\caption{Estimated variance (left) and lag $1$ autocovariance functions (right) for Model $2$}
	\label{fig:test1}
\end{figure}

\begin{figure}[htbp!] 
	\begin{minipage}[t]{0.5\linewidth} 
		\centering 
		\includegraphics[width=7cm,height=5cm]{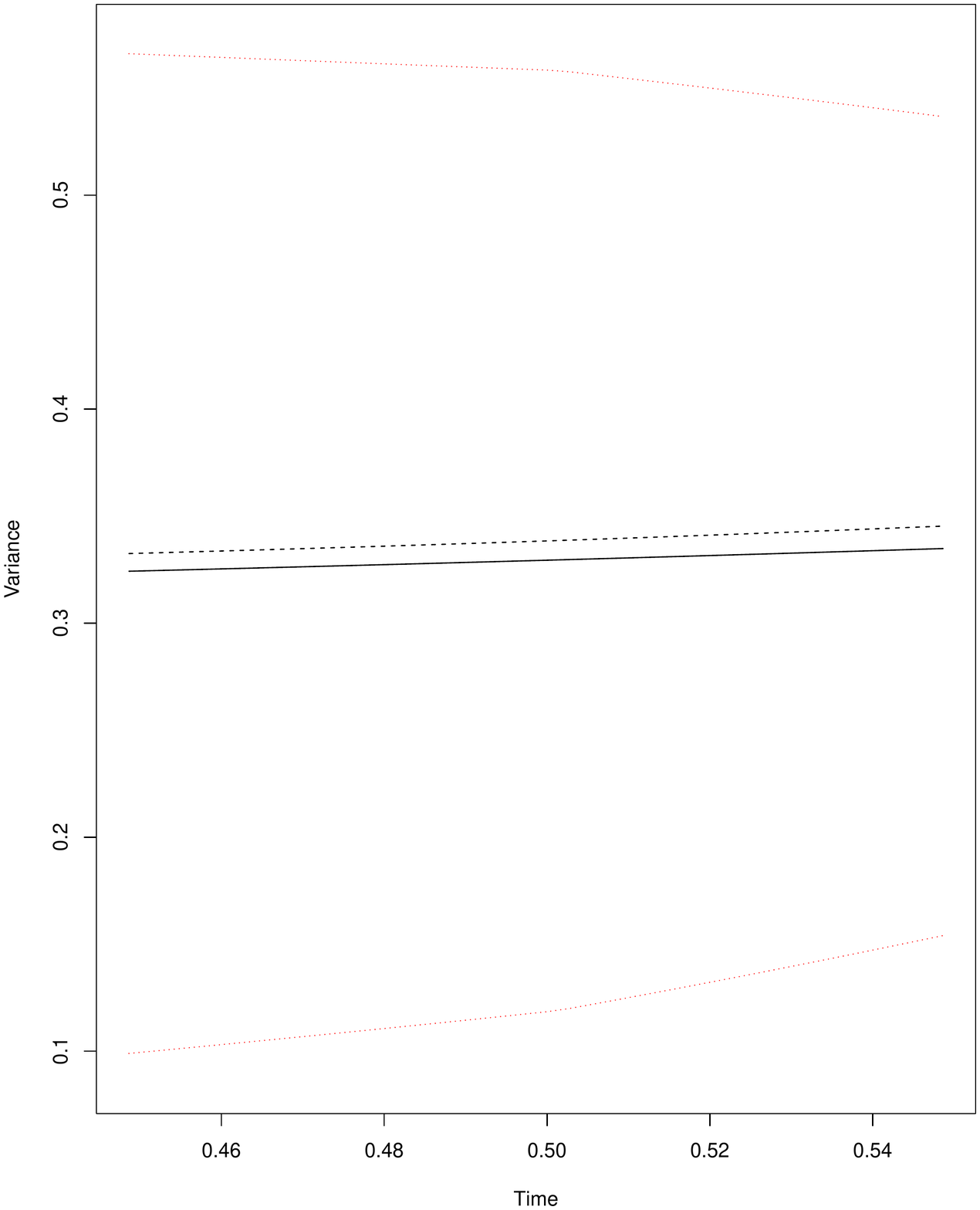}
	\end{minipage}
	\label{fig:sub13} 
	\begin{minipage}[t]{0.5\linewidth} 
		\centering 
		\includegraphics[width=7cm,height=5cm]{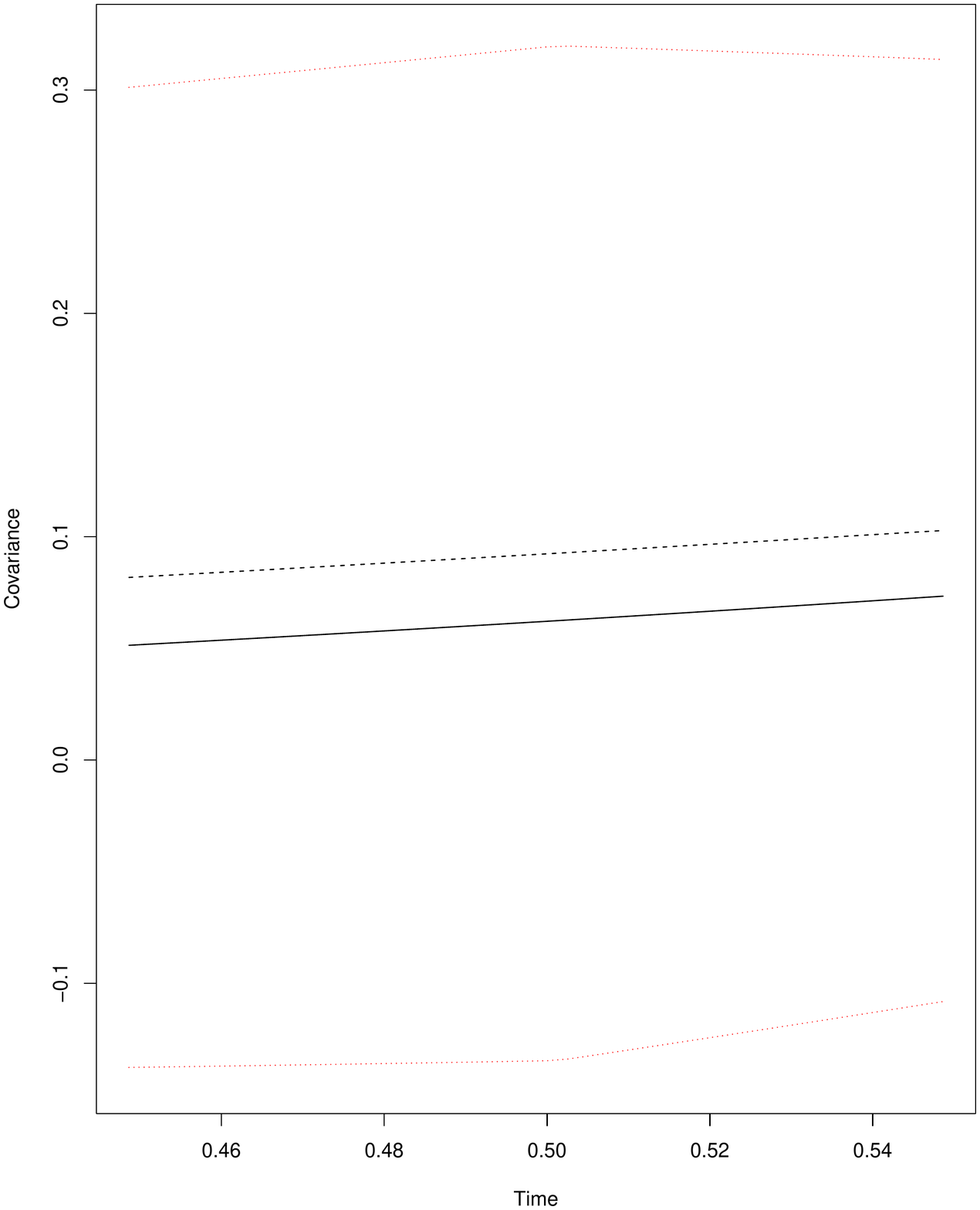} 
	\end{minipage}
	\label{fig:sub14} 
	\caption{Estimated variance (left) and lag $1$ autocovariance functions (right) for Model $3$}
	\label{fig:test2}
\end{figure}

To illustrate the reasonableness of our method, we also provide a quick comparison of our approach to a very straightforward ``naive" method. Such a method would start with a rough estimate of the mean function $\mu(t)$ using a local smoother, for example, a local linear regression. The resulting rough estimate of the mean function can then be subtracted from observations to form a series of residuals $e_{i},$ $i=1,\ldots,n.$  Using this series, a naive approach would estimate the variance function $\gamma_{0}(t)$ by applying a smoother, e.g. yet again the local linear regression, to squared residuals $e^{2}_{i},$ $i=1,\ldots,n.$ In much the same way, applying the local linear regression to a series $e_{i}e_{i-1},$ $i=2,\ldots,n$ will result in a ``naive" estimate of the lag $1$ autocovariance function $\gamma_{1}(t).$ In both situations, we used a simple generalized cross-validation to obtain the optimal smoothing bandwidth. 

It is probably sufficient to say that such a naive approach fails completely in an attempt to estimate the second order structure when the mean is discontinuous and has numerous change points. More specifically, we tried to obtain the coverage of the true variance function $\gamma_{0}(t)$ by a uniform confidence interval that is based on the ``naive" estimate described above. To do so, we used our Model $1$ with the sample size $n=400$ and $500$ replications. We found that the coverage is zero, that is, the true variance function $\gamma_{0}(t)$ is never completely inside the uniform confidence interval. This can be explained properly by noticing that our mean estimate used to obtain residuals is extremely crude. More specifically, in order for the local linear mean function estimator to be consistent at a given point the mean function $\mu(t)$ has to have two continuous derivatives at that point; see e.g. \cite{fan1994gijbels} p. $62$ for a detailed discussion. This lack of consistency results in a severe bias of the variance function estimator. Thus, such a direct approach seems to be completely inappropriate for determination of the second order structure.  

\section{Real data application}\label{sec6}

In this section, we illustrate our approach using a real dataset. There is a rather clear evidence that the global temperatures are nonstationary (see e.g. \cite{rao2004nonstationary}) and so we use the dataset that consists of monthly temperature anomalies observed during the period from January $1856$ to September $2019.$ A shorter subset of the same series has been used earlier in \cite{rao2004multiple}. The data used are publicly available from the Climate Research Unit of the University of East Anglia, UK at https://www.cru.uea.ac.uk/. The anomalies are defined here as the difference of temperatures from a reference value. The anomaly data are available for both Northern and Southern hemisphere separately. Figures \eqref{fig:nhemdata} and \eqref{fig:shemdata} display the temperature anomaly data for both hemispheres. 
\begin{figure}[h]
\centering
\begin{subfigure}{.5\textwidth}
  \centering
  \includegraphics[width=7cm,height=7cm]{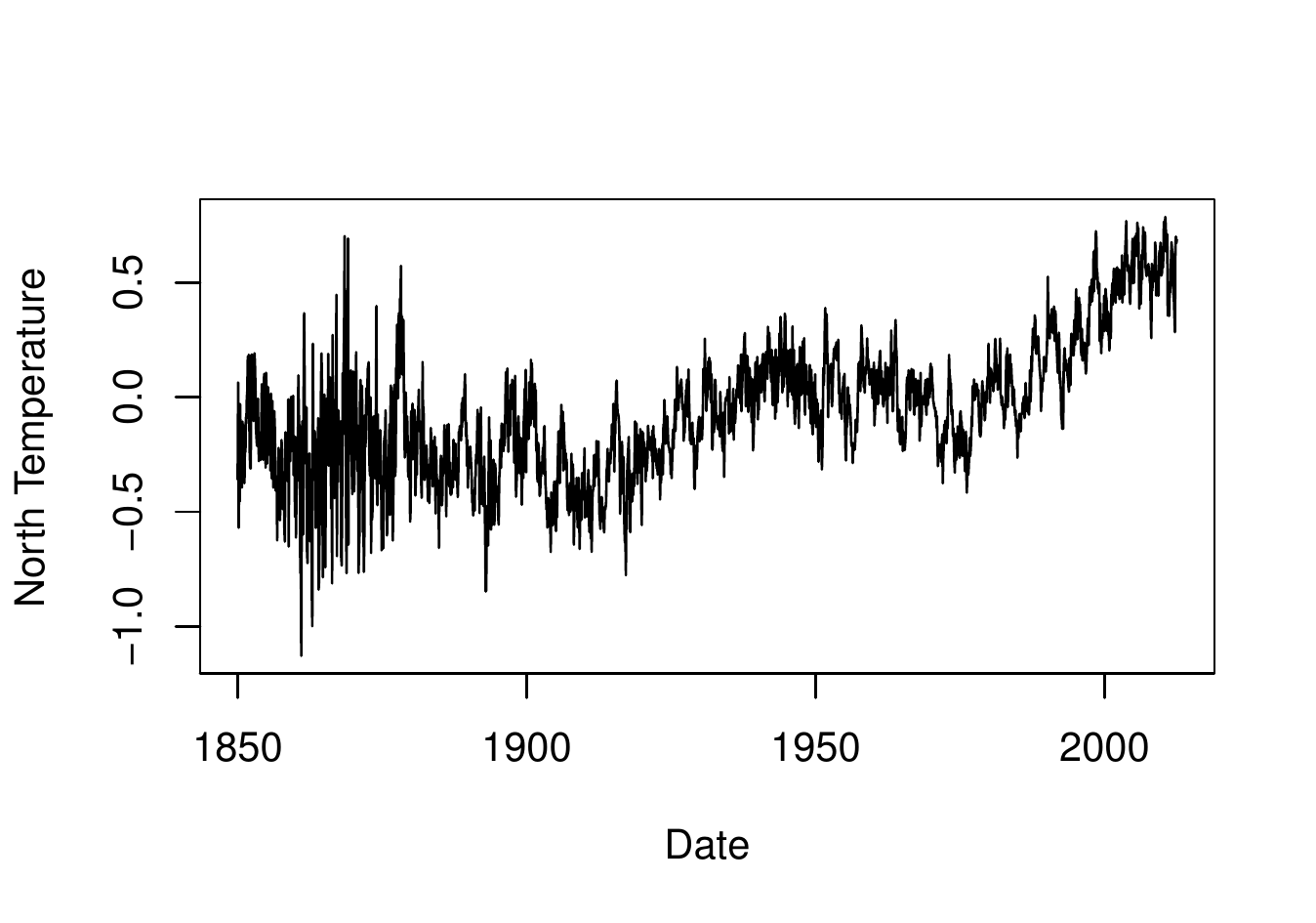}
  \caption{Northern Hemisphere Data}
  \label{fig:nhemdata}
\end{subfigure}%
\begin{subfigure}{.5\textwidth}
  \centering
  \includegraphics[width=7cm,height=7cm]{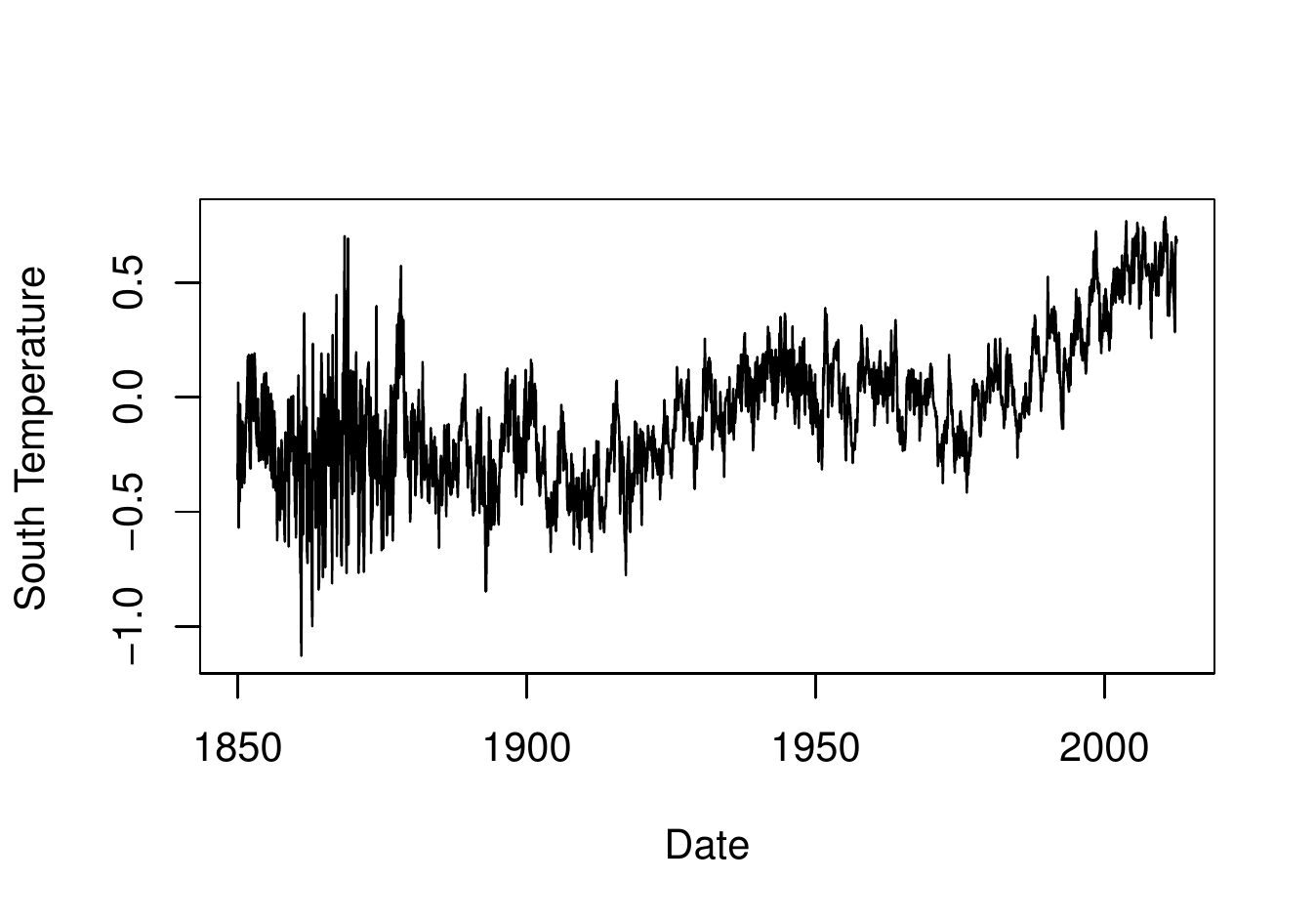}
  \caption{Southern Hemisphere Data}
  \label{fig:shemdata}
\end{subfigure}
\caption{Recorded temperature anomalies for Northern and Southern hemispheres}
\label{fig:realdata}
\end{figure}

Our purpose is to estimate the variance and lag $1$ autocovariance function of these data as a function of time.  For the Northern hemisphere data the approach suggested in our manuscript produces an almost monotonically decaying variance curve that suggests that some nonstationarity is, indeed, present in the data. This monotonic decay is probably due to the increasing number of weather stations recording the data over time. The variance of the Southern hemisphere data is also mostly decreasing although the decay is not as clearly monotonic as for the Northern hemisphere data. Note that, for both sets of data, the lag $1$ autocovariance is very small in magnitude; however, the horizontal zero line added to both autocovariance plots is clearly not fully inside the uniform confidence interval, indicating that the temperature series are not white noises. 

\begin{figure}[t]
\centering
\begin{subfigure}{.5\textwidth}
  \centering
  \includegraphics[width=7cm]{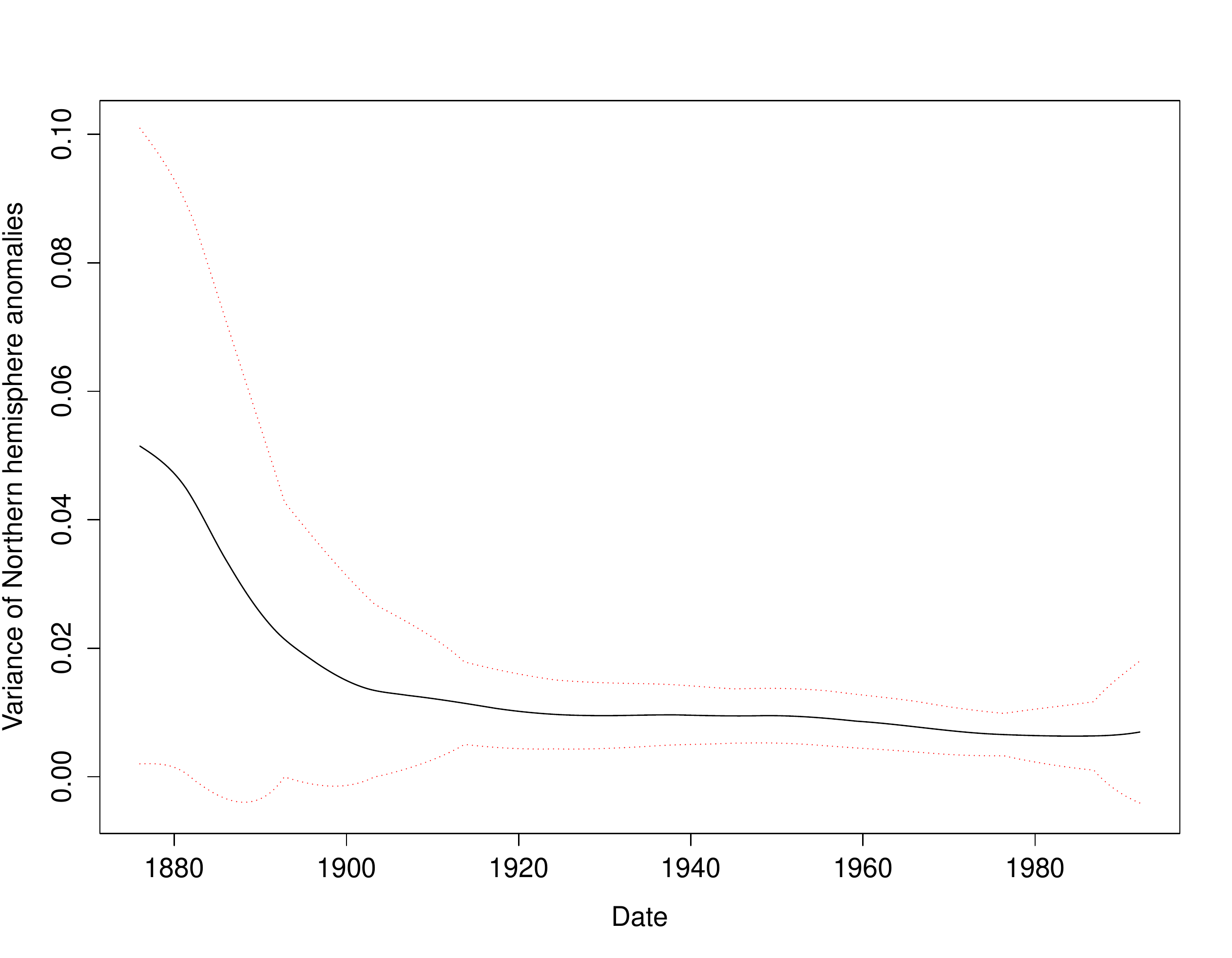}
  \label{fig:realdata1}
\end{subfigure}%
\begin{subfigure}{.5\textwidth}
  \centering
  \includegraphics[width=7cm]{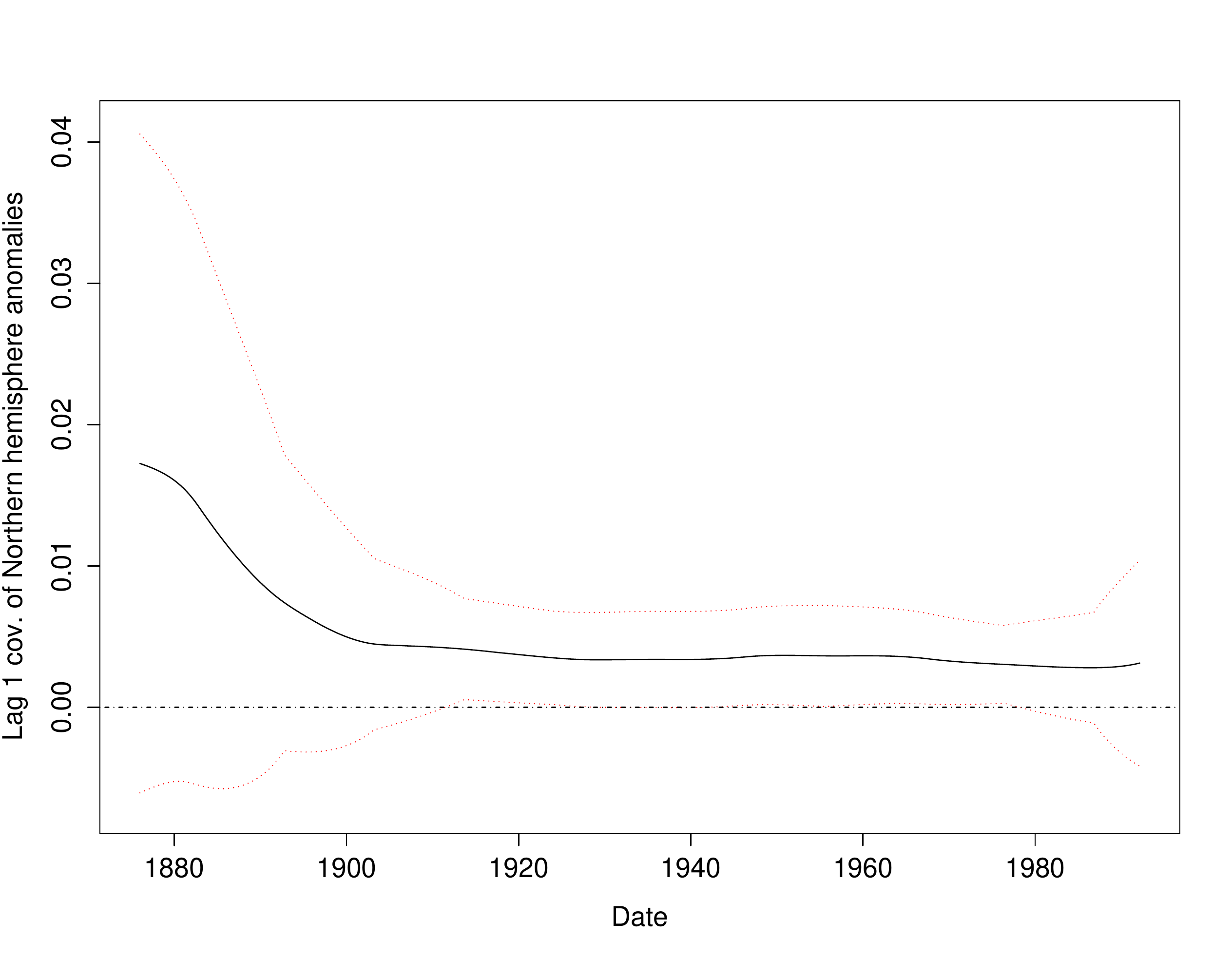}
  \label{fig:realdata2}
\end{subfigure}%
\caption{Variance (left) and lag $1$ autocovariance function (right) of Northern hemisphere anomalies}
\end{figure}

\begin{figure}[h]
\centering
\begin{subfigure}{.5\textwidth}
  \centering
  \includegraphics[width=7cm]{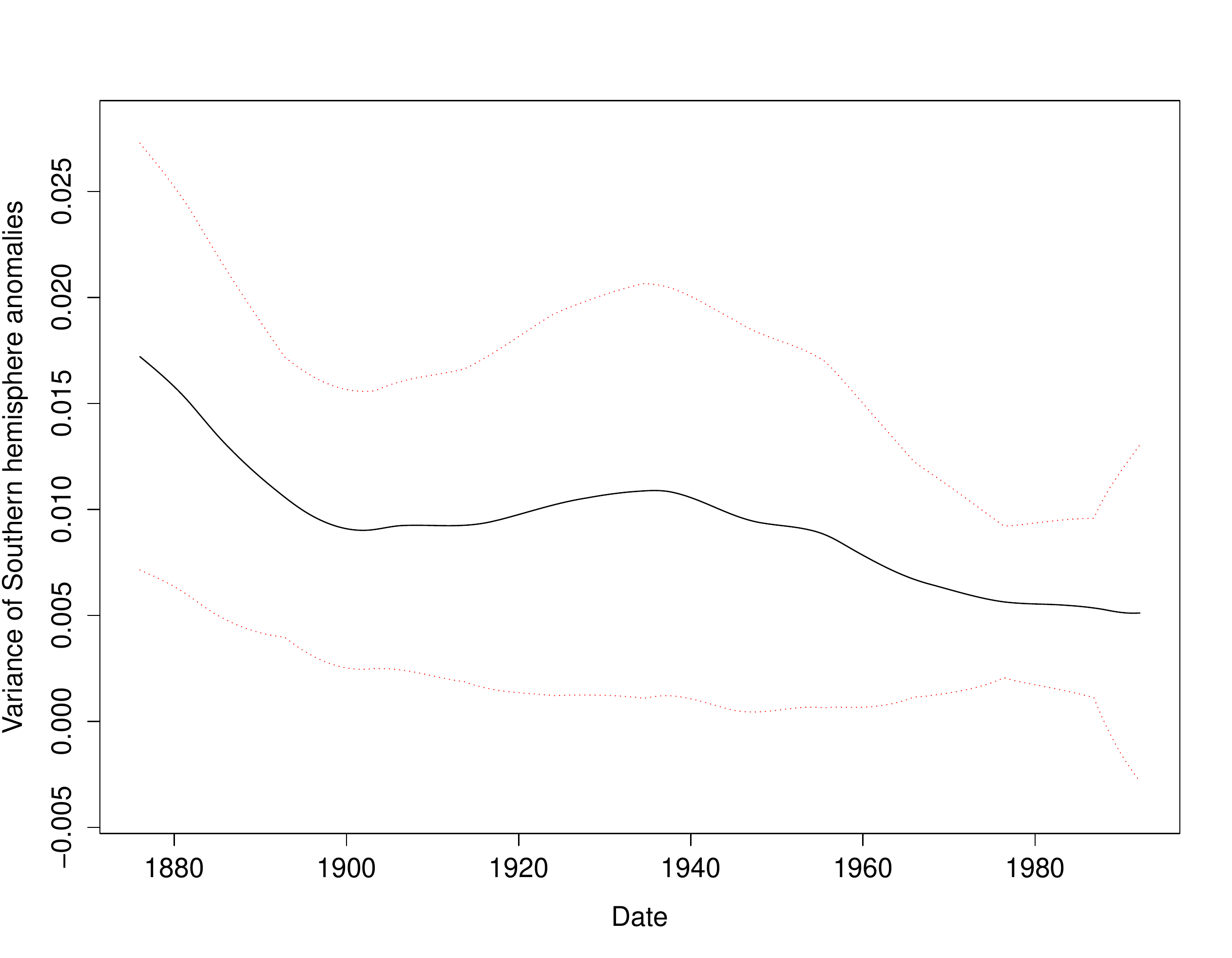}
  \label{fig:realdata11}
\end{subfigure}%
\begin{subfigure}{.5\textwidth}
  \centering
  \includegraphics[width=7cm]{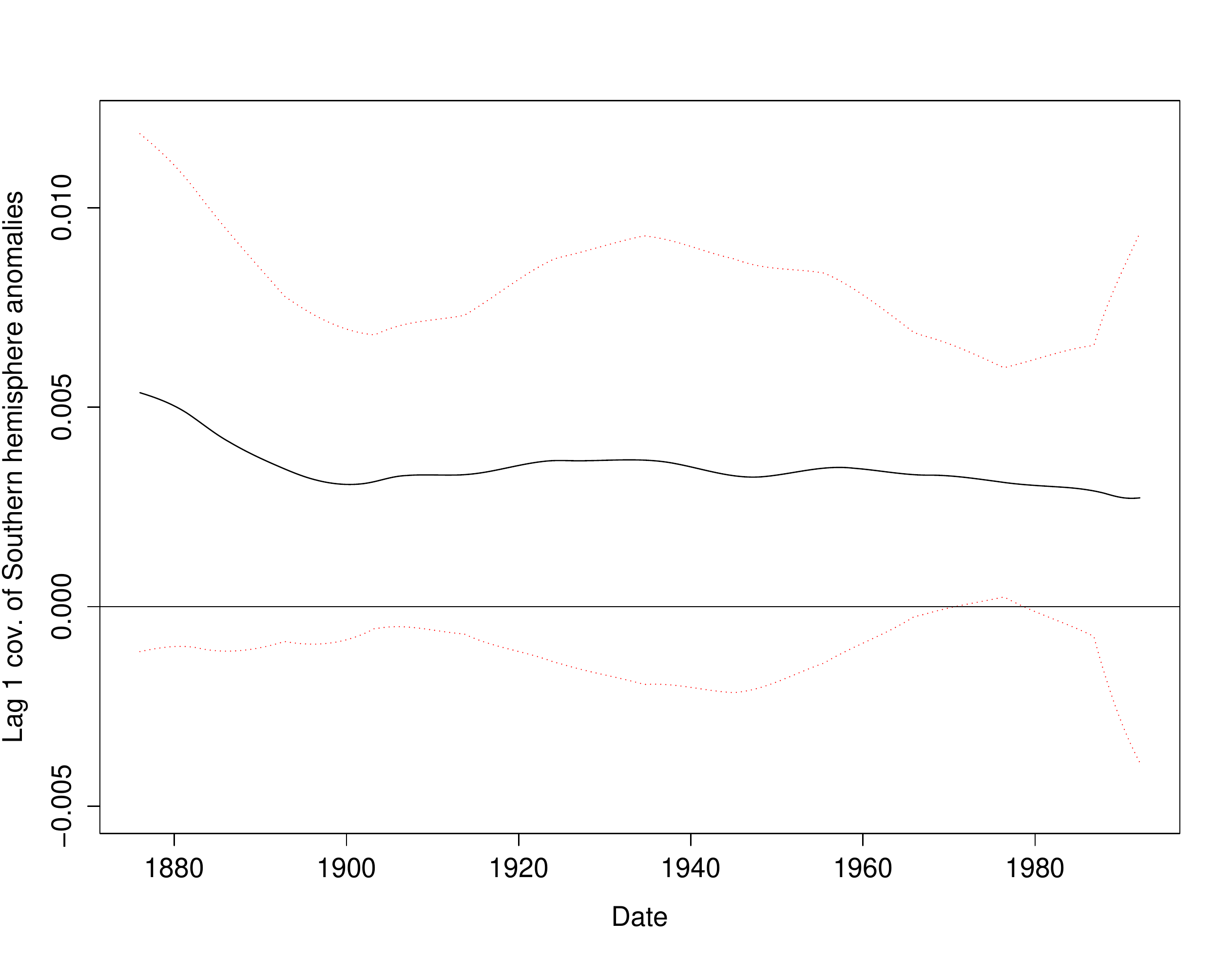}
  \label{fig:realdata22}
\end{subfigure}%
\caption{Variance (left) and lag $1$ autocovariance function (right) of Southern hemisphere anomalies}
\end{figure}

Now, we are interested in testing whether for these data there exist change points in the mean function, namely, $$H_0: \mu_1=\mu_2=\cdots=\mu_n=\mu~\longleftrightarrow~
H_1: \mu_i\neq \mu_j~\text{~for some}~1\le i<j\le n.$$ To this end, we will use the robust bootstrap test for nonstationary time series proposed by \cite{Zhou2013}[Section 4]. For Northern Hemisphere data, the robust bootstrap test yields a $<0.1\%$ $p$-values with 10000 bootstrap samples, which provides a very strong evidence against the null hypothesis of no structural change in mean. On the other hand, we applied the robust bootstrap to the Southern Hemisphere data. The corresponding $p$-value of the test with 10000 bootstrap samples is also $<0.1\%$, which also shows a strong evidence against $H_0.$ As a result, the test further illustrates the usefulness of our method for constructing SCB with finite change points. Over some time periods, the data with wild fluctuations indicates a change in mean and suggests the non-stationarity, as pointed by \cite{rao2004nonstationary}.
 
\vspace{3mm}
{\Large\bf Appendix}
\vspace{2mm}

The following theorem provides the Gaussian approximation result for nonstationary multiple time series, which can be found in Theorem 2 of \cite{ZW2010}.

\begin{theorem}[Theorem 2 in \cite{ZW2010}]\label{thm3}
	Suppose that Assumptions \ref{a1} and \ref{a2} hold. A partial sum process can be defined as $\tilde{S}_i^k=\sum_{j=1}^i\eps_j^k$ for $k=1,...,h$. Then, on a richer probability space, there exist i.i.d. standard normal random variables $u_1,u_2,...,$ and a process $\widehat{S}_i^k$ such that $\{\widetilde{S}_i^k\}_{i=1}^n\overset{\mathcal{D}}{=}\{\widehat{S}_i^k\}_{i=1}^n$ and 
	\begin{equation}\label{e15}
	\max_{i\leq n}\left|\widehat{S}_i^k- \sum_{j=1}^i\sigma_k(t_j)u_j\right|=
	\bigO_{\Pr}\left(n^{2/5}\log(n)\right),
	\end{equation}
	where $\sigma_k(\cdot)$ is defined as \cref{e12}.
\end{theorem}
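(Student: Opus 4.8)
\emph{Proof idea.} Since the displayed statement is quoted from Theorem~2 of \cite{ZW2010}, the plan is to reconstruct their argument, which proceeds by an $m$-dependent approximation, a big-block/small-block decomposition, and a block-by-block Gaussian coupling, with the truncation level and the block lengths tuned so that all accumulated errors are of order $n^{2/5}\log(n)$. The whole construction is carried out on a product extension of the original probability space, which is what allows the distributional identity $\{\widetilde{S}_i^k\}_{i=1}^n\overset{\mathcal{D}}{=}\{\widehat{S}_i^k\}_{i=1}^n$ to be imposed.

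First I would truncate the temporal dependence: writing $\eps_j^k=H_k(t_j,\mathcal{F}_j)$, put $\eps_{j,m}^k=\EE(\eps_j^k\mid\zeta_{j-m},\dots,\zeta_j)$, an $m$-dependent array. By \cref{lemma2}, $\delta_4(H_k,l)=\bigO(l^{-2})$, so $\Vert\eps_j^k-\eps_{j,m}^k\Vert_4=\bigO(m^{-1})$, and a maximal inequality phrased through the physical dependence measure (as in \cite{Wu2005}) bounds $\max_{i\le n}\bigl|\sum_{j\le i}(\eps_j^k-\eps_{j,m}^k)\bigr|$ in probability by a term of order $\sqrt{n}\,m^{-1}$ up to logarithmic factors; the same truncation inside \cref{e12} gives $\sup_t|\sigma_k^2(t)-\sigma_{k,m}^2(t)|=\bigO(m^{-1})$.

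Next I would split $\{1,\dots,n\}$ into consecutive big blocks of length $L$ separated by small blocks of length $\ell$, with $m\ll\ell\ll L\ll n$; the small-block sums are negligible in probability. On each big block I freeze the time argument, replacing $H_{k,m}(t_j,\mathcal{F}_j)$ by $H_{k,m}(t_r,\mathcal{F}_j)$ for $t_r$ the block's left endpoint; by the stochastic Lipschitz bound of \cref{lemma3} each summand changes by $\bigO(L/n)$ in $\mathcal{L}^2$, and since these increments inherit the short-range dependence, their partial sums over all blocks stay small. Each modified big block is now a stationary $m$-dependent sum, to which I apply a strong Gaussian approximation for stationary causal sequences (Skorokhod embedding, or the stationary instance underlying \cite{ZW2010}), coupling the $b$-th block sum with an $N(0,L\,\sigma_{k,m}^2(t_r))$ variable. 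Replacing $\sigma_{k,m}^2$ by $\sigma_k^2$ (error $\bigO(m^{-1})$) and then, using the Lipschitz continuity of $\sigma_k$ assumed in \cref{thm2}(1), rewriting $L\,\sigma_k^2(t_r)$ in terms of $\sum_{j\in\text{block}}\sigma_k(t_j)$, I splice the per-block normals into a single i.i.d.\ sequence $\{u_j\}$, so that $\widehat{S}_i^k=\sum_{j\le i}\sigma_k(t_j)u_j$ up to the accumulated errors.

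Finally I would collect those errors — the truncation term $\bigO_\Pr(\sqrt{n}\,m^{-1})$, the small-block term, the freezing remainder, the within-block Gaussian-coupling term (whose exponent is governed by the moment bound in \cref{a1}), and the variance-matching remainder — and choose $m$, $\ell$, $L$ as powers of $n$ balancing them, which delivers $n^{2/5}\log(n)$. The hard part is the genuinely non-stationary step: making the block couplings agree on a single Gaussian process with the correct time-varying variance while keeping the total discrepancy $o(\sqrt{n})$, which forces $L$ to be large enough for the within-block CLT/embedding to be accurate yet small enough for local stationarity (hence the freezing step and the Riemann-sum replacement of the block variance) to remain valid. If the joint coupling over $k=1,\dots,h$ needed for \cref{prop1} is required, the same scheme is run for the vector process $(\eps_j^1,\dots,\eps_j^h)^\top$ with a multivariate embedding, the cross long-run covariances being summable again by \cref{lemma2}.
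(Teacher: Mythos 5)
The paper does not actually prove this statement: it is imported verbatim as Theorem~2 of \cite{ZW2010}, and the only original content is the remark immediately following it, which flags the single adaptation needed for the present setting. Your reconstruction of the block-coupling architecture ($m$-dependent truncation, big-block/small-block splitting, freezing the time argument via local stationarity, per-block Gaussian embedding, and balancing the exponents to reach $n^{2/5}\log n$) is a fair sketch of how the cited proof goes. However, you have a concrete gap at the very first step, and it is precisely the point the paper's remark is about. You write ``By \cref{lemma2}, $\delta_4(H_k,l)=\bigO(l^{-2})$, so $\Vert\eps_j^k-\eps_{j,m}^k\Vert_4=\bigO(m^{-1})$,'' but \cref{lemma2} only gives $\bigO(l^{-2})$ for $0<l<k$; for $l\ge k$ it gives $\bigO(l^{-2})+\bigO((l-k+1)^{-2})$, reflecting the fact that $\eps_j^k=(x_j-x_{j-k})^2$ depends on the innovation $\zeta_{j-k}$ essentially as strongly as on $\zeta_j$. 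Consequently the truncation error is
\begin{equation*}
\Bigl\Vert\eps_j^k-\eps_{j,m}^k\Bigr\Vert_4\le\sum_{l>m}\delta_4(H_k,l)=\bigO\bigl(m^{-1}\bigr)+\bigO\bigl((m-k+1)^{-1}\bigr),
\end{equation*}
which is \emph{not} $\bigO(m^{-1})$ unless the truncation order $m$ exceeds $k$ by a constant factor. Since $k$ ranges up to $h=\bigO(n^{1/4}\log n)$ while the naive balancing you describe would permit $m$ as small as a low power of $n$ (of order $n^{1/10}$ suffices to make $\sqrt{n}/m\le n^{2/5}$), your scheme breaks down for the larger lags: the truncation error would be $\bigO(1)$ rather than vanishing.

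The fix, which the paper states explicitly after the theorem, is to take the order of the $m$-dependent approximation strictly larger than $k$ (uniformly, larger than a constant multiple of $h$), and then recheck that the remaining error terms in Corollary~2 of \cite{WZ2011} still balance to $n^{2/5}\log(n)$ — which they do, since $m\asymp n^{1/4}\log n$ still gives $\sqrt{n}/m=o(n^{2/5})$. Your sketch is otherwise consistent with the cited argument, but as written the first reduction fails for $k$ comparable to $h$, so you should restate \cref{lemma2} correctly and impose $m\gtrsim h$ before the rest of the block construction can proceed.
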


\cref{thm3} implies the Gaussian approximation for a partial sum of a locally stationary process. Note that, due to the result stated in \cref{lemma2}, the physical dependence measure $\delta_4(H_k,i)$ has different types of the polynomial decay under two circumstances, which is more complicated than that of Corollary 2 in \cite{WZ2011}. But letting the order of the $m$-dependence sequence larger than $k$ and making a careful check of the proof of Corollary 2 in \cite{WZ2011}, we can obtain the same argument. Owing to the non-stationarity, the approximated Gaussian process $\{\sum_{j=1}^i\sigma_k(t_j)u_j\}_{i=1}^n$ has independent but possibly non-identically distributed increments. 

\textbf{Proof of \cref{lemma1}}.
	For $j\in \mathbb{Z}$, define the projection operator $\mathcal{P}_j(\cdot)=\EE(\cdot|\mathcal{F}_j)-\EE(\cdot|\mathcal{F}_{j-1})$, then we can write $x_i=\sum_{j=-\infty}^i\mathcal{P}_j(x_i)$. Denote $t_i=i/n$, we have
	\begin{align*}
	|\gamma_{k}(t_i)|&=\left|\EE\left(\sum_{j=-\infty}^i \mathcal{P}_j(x_i)\sum_{j=-\infty}^{i-k}\mathcal{P}_j(x_{i-k})
	\right)\right|\\
	&\le \left|\EE\left(\sum_{j=-\infty}^\infty\mathcal{P}_j(x_i)
	\mathcal{P}_j(x_{i-k})\right)\right|\\
	&\le \sum_{j=-\infty}^\infty\Vert\mathcal{P}_j(x_i)\Vert_2\cdot
	\Vert\mathcal{P}_j(x_{i-k})\Vert_2\\
	&\le \sum_{j=-\infty}^\infty \delta_2(G,i-j)\delta_2(G,i-k-j). 
	\end{align*} 
	The first inequality follows by the orthogonality of $\mathcal{P}_j(\cdot)$ and the second inequality is due to Fubini's theorem and Cauchy-Schwartz inequality. The last inequality follows from the argument in \cite[Theorem 1]{Wu2005}. Therefore, with \cref{a1}, there exists a constant $C>0$ such that $|\gamma_k(t_i)|\le Ck^{-2}$. This concludes our proof. \qed
	\vskip 0.3cm
	
	\textbf{Proof of \cref{lemma2}}.
	Now, we consider the locally stationary process $(x_l-x_{l-k})^2$ and let $x_l'$ be the coupled process of $x_l$ with $\zeta_0$ replaced by an i.i.d. copy $\zeta_0'$. Then for each $k=1,...,h$,
	\begin{align*}
	\Vert H_k(t,\mathcal{F}_l)-H_k(t,\mathcal{F}_l')\Vert_4
	=&\Vert (x_l-x_{l-k})^2-(x_l'-x_{l-k}')^2\Vert_4\\
	=&\Vert (x_l-x_{l-k}+x_l'-x_{l-k}') (x_l-x_{l-k}-x_l'+x_{l-k}')\Vert_4\\
	\le& \Vert x_l-x_{l-k}+x_l'-x_{l-k}'\Vert_8 \cdot
	\Vert x_l-x_{l-k}-x_l'+x_{l-k}'\Vert_8\\
	\le& 4\sup_{t\in[0,1]}\Vert x_l \Vert_8 \cdot (\sup_{t\in[0,1]}\Vert x_l-x_l'\Vert_8+\sup_{t\in[0,1]}\Vert x_{l-k}-x_{l-k}'\Vert_8)\\
	=& 4\sup_{t\in[0,1]}\Vert x_l\Vert_8
	\begin{cases}
	\delta_8(G,l), &\text{if}~0< l < k,\\
	\delta_8(G,l)+\delta_8(G,l-k+1), &\text{if}~l\ge k.
	\end{cases}\\
	=&\begin{cases}
	\bigO(l^{-2}), &\text{if}~0< l < k,\\
	\bigO(l^{-2})+\bigO((l-k+1)^{-2}), &\text{if}~l\ge k.
	\end{cases}
	\end{align*}
	\qed
	\vskip 0.3cm
	
	\textbf{Proof of \cref{lemma3}}.
		By \cref{a3}, we know that $\beta_k(\cdot)$ is also Lipschitz continuous, thus with \cref{e4},
		\begin{align*}
		&\Vert H_k(t,\mathcal{F}_i)-H_k(s,\mathcal{F}_i)\Vert_2\\
		=&\Vert [G(t,\mathcal{F}_i)-
		G(t-\frac{k}{n},\mathcal{F}_{i-k})]^2-\beta_k(t)-
		[G(s,\mathcal{F}_i)-
		G(s-\frac{k}{n},\mathcal{F}_{i-k})]^2+\beta_k(s)\Vert_2\\
		\le& \Vert\beta_k(t)-\beta_k(s)\Vert_2+\Vert (G(t,\mathcal{F}_i)-
		G(t-\frac{k}{n},\mathcal{F}_{i-k})+
		G(s,\mathcal{F}_i)-
		G(s-\frac{k}{n},\mathcal{F}_{i-k})\Vert_4\\
		{}&{}\cdot \Vert (G(t,\mathcal{F}_i)-
		G(t-\frac{k}{n},\mathcal{F}_{i-k})-
		G(s,\mathcal{F}_i)+
		G(s-\frac{k}{n},\mathcal{F}_{i-k})\Vert_4\\
		\le& C|t-s|+4\sup_{t\in[0,1]}\Vert x_i\Vert_4\cdot
		(\Vert G(t,\mathcal{F}_i)-
		G(s,\mathcal{F}_i)\Vert_4+\Vert
		G(t-\frac{k}{n},\mathcal{F}_{i-k})-
		G(s-\frac{k}{n},\mathcal{F}_{i-k})\Vert_4)\\
		\le&C|t-s|.
		\end{align*}
		The first inequality follows from the triangle inequality and Minkowski's inequality. The second inequality uses elementary calculation and the last line follows by \cref{a2}. On the other hand, \cref{a3} implies that $\beta_k(t)$ is bounded on the compact $[0,1]$. Then, 
		\begin{align*}
		\sup_{t\in[0,1]}\Vert H_k(t,\mathcal{F}_i)\Vert_4&=
		\sup_{t\in[0,1]}\Vert (x_i-x_{i-k})(x_i-x_{i-k})+\beta_k(t_i)\Vert_4\\
		&\le (\sup_{t\in[0,1]}\Vert x_i-x_{i-k}\Vert_8)^2+\sup_{t\in[0,1]} \Vert\beta_k(t_i)\Vert_4\\
		&\le (\sup_{t\in[0,1]}\Vert x_i\Vert_8+
		\sup_{t\in[0,1]}\Vert x_{i-k}\Vert_8)^2
		+\sup_{t\in[0,1]} \Vert\beta_k(t_i)\Vert_4\\
		&<\infty.
		\end{align*} \qed
	
		To prove \cref{thm2}, we need to introduce the following lemmas.
\begin{lemma}\label{l5}
	Let $F_n^k(t)=\sum_{i=1}^nu_iK_b(t_i-t)$, where $u_i,~i \in \mathbb{Z}$ be i.i.d. $N(0,1)$ random variables. Suppose that $b\to 0$ and $nb/\log(n) \to \infty,~m^\ast=1/b$. Then, 
	\begin{align*}
	\lim_{n\to\infty}\left(\Pr\left[\frac{1}{\sqrt{nb\phi_0}}\sup_{t\in\mathcal{T}}|F_n^k(t)|-B_K(m^\ast)\leq \frac{u}{\sqrt{2\log(m^\ast)}}\right]\right)\\
	=\exp\{-2\exp(-u)\}.
	\end{align*}
\end{lemma}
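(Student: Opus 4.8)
The plan is to establish this as a maximal-deviation result for a Gaussian smoothed process, which is a classical type of statement going back to the extreme value theory for stationary Gaussian processes. The quantity $F_n^k(t) = \sum_{i=1}^n u_i K_b(t_i - t)$ is, for each fixed $t$, a mean-zero Gaussian random variable, and the whole family $\{F_n^k(t) : t \in \mathcal{T}\}$ is a Gaussian process whose covariance structure depends on $t-s$ through the kernel. First I would compute the variance: $\mathrm{Var}(F_n^k(t)) = \sum_{i=1}^n K_b^2(t_i - t)$, and a Riemann-sum approximation gives $\mathrm{Var}(F_n^k(t)) = nb\,\phi_0 (1 + o(1))$ uniformly in $t \in \mathcal{T} = [b, 1-b]$, using $\phi_0 = \int K^2$ and the compact support of $K$ from Assumption~\ref{a5}. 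This explains the normalization $1/\sqrt{nb\,\phi_0}$. Next I would identify the correlation function: writing $r_n(t,s) = \mathrm{Corr}(F_n^k(t), F_n^k(s))$, a similar Riemann-sum computation shows $r_n(t, t+vb) \to \rho(v) := \phi_0^{-1}\int K(u)K(u+v)\,du$ as $n \to \infty$, so after the time-rescaling $t = b\tau$ the process behaves locally like a stationary Gaussian process with correlation $\rho$, and $\rho(v) = 1 - \lambda v^2/2 + o(v^2)$ with $\lambda = \phi_0^{-1}\int |K'(u)|^2 du$ (integrating by parts / differentiating under the integral sign). This is where the constant $\frac{1}{4\phi_0}\int_{-1}^1 |K'(u)|^2\,du$ inside $B_K(m^\ast)$ comes from (the factor $1/4$ versus $\lambda$ being accounted for by the precise form of the normalizing sequence).

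The core of the argument is then to invoke the classical extreme-value limit theorem for (locally) stationary Gaussian processes — the Pickands/Berman-type result, as presented e.g. in Leadbetter--Lindgren--Rootz\'en or in the form used in \cite{Zhou2010} and related SCB papers. The rescaled process lives on an interval of length $m^\ast = 1/b \to \infty$, which plays the role of the time horizon $T$ in the classical statement. The standard conclusion is that for a stationary Gaussian process with unit variance and correlation $1 - \lambda|v|^2/2 + o(|v|^2)$, the suitably centered and scaled maximum over $[0,T]$ converges to a Gumbel law; matching the centering constant $a_T$ and scaling $b_T = \sqrt{2\log T}$ to the problem at hand yields exactly $B_K(m^\ast)$ as the centering and $\sqrt{2\log(m^\ast)}$ as the scaling, giving the stated limit $\exp\{-2\exp(-u)\}$ (the factor $2$ in the exponent coming from taking the supremum of the absolute value rather than the one-sided maximum). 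I would also verify the mild regularity needed for that theorem — that $r_n(t,s)\log(\text{separation}) \to 0$ for well-separated $t, s$, which follows from the compact support of $K$ since $r_n(t,s) = 0$ once $|t-s| > 2b$; the condition $nb/\log n \to \infty$ ensures the Riemann-sum approximations of variance and correlation hold with errors negligible on the relevant scale.

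The main obstacle I expect is not the extreme-value theorem itself but the uniformization of the discrete-to-continuous approximations: $F_n^k(t)$ is built from a finite sum over the grid $t_i = i/n$, so one must show that the discrete Gaussian process is uniformly (over $t \in \mathcal{T}$) close to the idealized continuous stationary Gaussian field whose maximum the classical theorem describes, with an error that vanishes after multiplication by $\sqrt{2\log m^\ast}$. This requires controlling the modulus of continuity of $F_n^k$ between grid points and checking that the number of effective "independent blocks" is $\asymp m^\ast$ with the correct constant — precisely the kind of careful bookkeeping that determines whether the centering is $B_K(m^\ast)$ rather than something off by a constant. Since a near-identical computation underlies Theorem~1 and related results of \cite{Zhou2010} and \cite{ZW2010}, I would structure the proof to quote their version of the Gaussian extreme-value lemma and then only verify that our $K_b$-weighted sum meets its hypotheses, relegating the Riemann-sum estimates to routine lemmas.
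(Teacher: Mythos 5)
Your proposal is correct and follows essentially the same route as the paper, which simply defers to Lemma 2 of \cite{ZW2010}: both rest on the Bickel--Rosenblatt/Pickands extreme-value limit for the kernel-weighted Gaussian sum, and your identification of the variance normalization $nb\phi_0$, the local correlation $\rho(v)=\phi_0^{-1}\int K(u)K(u+v)\,du$ with curvature $\lambda=\phi_0^{-1}\int|K'|^2$, and the resulting centering constant $\tfrac{\sqrt{\lambda}}{2\pi}$ inside $B_K(m^\ast)$ matches the stated form exactly. The discrete-to-continuous bookkeeping you flag is precisely what the cited lemma handles, so quoting it and verifying its hypotheses, as you propose, is what the authors do.
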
 
\begin{proof}
	Similar to Lemma 2 in \cite{ZW2010}.
\end{proof} 

\begin{lemma}\label{l6}
	Let $D_k(t)=\frac{1}{nb}\sum_{i=1}^n\eps_i^kK_b(t_i-t)$. Assume that $\sigma_k(t)$ is Lipschitz continuous and bounded away from 0 on $[0,1]$ and $\log(n)/{n^{3/5}b}+b\log(n) \to 0$. Then we have
	\begin{align*}
	\lim_{n\to\infty}\left(\Pr\left[\sqrt{\frac{nb}{\phi_0}}\sup_{t\in\mathcal{T}}\left|\sigma_k^{-1}(t)D_k(t)\right|-B_K(m^\ast)\leq \frac{u}{\sqrt{2\log(m^\ast)}}\right]\right)\\
	=\exp\{-2\exp(-u)\}.
	\end{align*}
\end{lemma}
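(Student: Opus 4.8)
The plan is to reduce the statement to \cref{l5} by showing that the partial-sum process $\sum_{j=1}^i \eps_j^k$ can be replaced, uniformly and up to a negligible error, by the Gaussian process $\sum_{j=1}^i \sigma_k(t_j) u_j$ from \cref{thm3}, and then by $\sigma_k(t)\sum_{j=1}^i u_j$ using the Lipschitz continuity of $\sigma_k$. First I would write $D_k(t) = \tfrac{1}{nb}\sum_{i=1}^n \eps_i^k K_b(t_i - t)$ and apply summation by parts (Abel summation) to express $D_k(t)$ in terms of the partial sums $\tilde S_i^k = \sum_{j=1}^i \eps_j^k$ and the increments $K_b(t_{i+1}-t) - K_b(t_i - t)$ of the kernel. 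On the richer probability space of \cref{thm3}, $\tilde S_i^k$ has the same distribution as $\widehat S_i^k$, and \cref{e15} gives $\max_{i\le n}|\widehat S_i^k - \sum_{j=1}^i \sigma_k(t_j) u_j| = \bigO_{\Pr}(n^{2/5}\log n)$. Carrying this bound through the Abel-summation representation, the total variation of $K_b$ over the grid is $\bigO(1/b)$ (since $K$ has bounded variation on its compact support by \cref{a5}), so the replacement error in $D_k(t)$ is $\bigO_{\Pr}\!\big(\tfrac{1}{nb}\cdot n^{2/5}\log n \cdot \tfrac{1}{b}\big) = \bigO_{\Pr}\!\big(\tfrac{\log n}{n^{3/5} b^2}\big)$. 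After multiplying by the normalizing factor $\sqrt{nb/\phi_0}$, this becomes $\bigO_{\Pr}\!\big(\tfrac{\log n}{n^{1/10} b^{3/2}}\big)$, which one checks is $o(1/\sqrt{\log n})$ precisely under the condition $\log(n)/(n^{3/5}b) \to 0$ — this is where that hypothesis is used.

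Next I would pass from $\sum_{j=1}^i \sigma_k(t_j) u_j$ to $\sigma_k(t)\sum_{j=1}^i u_j$ inside the kernel-weighted sum. Because $K_b(t_i - t)$ is supported on $|t_i - t| \le b$ and $\sigma_k$ is Lipschitz, on that support $|\sigma_k(t_i) - \sigma_k(t)| = \bigO(b)$; applying Abel summation once more to $\tfrac{1}{nb}\sum_i (\sigma_k(t_i) u_i) K_b(t_i - t)$ and comparing with $\sigma_k(t)\cdot \tfrac{1}{nb}\sum_i u_i K_b(t_i - t) = \tfrac{\sigma_k(t)}{nb} F_n^k(t)$, the difference is controlled by $b$ times the maximum of the Gaussian random walk $\sum_{j=1}^i u_j$ over a window of length $\bigO(nb)$, which is $\bigO_{\Pr}(\sqrt{nb\log n})$ by a standard maximal inequality. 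Thus the second replacement error in $D_k(t)$ is $\bigO_{\Pr}\!\big(\tfrac{b}{nb}\sqrt{nb\log n}\big) = \bigO_{\Pr}\!\big(\sqrt{b\log n / n}\big)$, and after the $\sqrt{nb/\phi_0}$ normalization it is $\bigO_{\Pr}(b\sqrt{\log n})$, which is $o(1/\sqrt{\log n})$ exactly when $b\log n \to 0$ — the second half of the stated condition. Since $\sigma_k(t)$ is bounded away from $0$, dividing by $\sigma_k(t)$ throughout is harmless and converts $\tfrac{\sigma_k(t)}{nb} F_n^k(t)$ into $\tfrac{1}{nb} F_n^k(t)$.

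Finally, combining the two steps, $\sqrt{nb/\phi_0}\,\sup_{t\in\mathcal{T}}|\sigma_k^{-1}(t) D_k(t)|$ and $\sqrt{nb/\phi_0}\,\sup_{t\in\mathcal{T}} \tfrac{1}{nb}|F_n^k(t)| = \tfrac{1}{\sqrt{nb\phi_0}}\sup_{t\in\mathcal{T}}|F_n^k(t)|$ differ by a term that is $o_{\Pr}(1/\sqrt{\log n})$. Since $1/\sqrt{2\log(m^\ast)} = 1/\sqrt{2\log n}$ is the scale at which the Gumbel limit in \cref{l5} lives, Slutsky's theorem lets us transfer the limiting distribution: the extremal statistic for $D_k$ has the same Gumbel limit $\exp\{-2\exp(-u)\}$ as the one for $F_n^k$, which is given by \cref{l5} (whose hypotheses $b\to 0$, $nb/\log n \to \infty$ are implied by the assumptions here). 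I expect the main obstacle to be the bookkeeping in the two Abel-summation arguments — in particular, verifying that the bounded-variation bound $\bigO(1/b)$ on $\sum_i |K_b(t_{i+1}-t) - K_b(t_i-t)|$ is uniform in $t\in\mathcal{T}$ and correctly tracking how each error term scales after multiplication by $\sqrt{nb}$, so that the two pieces of Condition (3) of \cref{thm2} (inherited here) are seen to be exactly what is needed.
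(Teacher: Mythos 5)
Your overall strategy is exactly the paper's: apply summation by parts, invoke the Gaussian approximation of \cref{thm3} to replace the partial sums of $\eps_j^k$ by $\sum_{j\le i}\sigma_k(t_j)u_j$, use the Lipschitz continuity of $\sigma_k$ to pull $\sigma_k(t)$ out of the kernel-weighted sum, and then conclude from \cref{l5} by a Slutsky-type argument at the $1/\sqrt{2\log(m^\ast)}$ scale. Your second replacement step is carried out correctly and yields the same bound $\bigO_{\Pr}(\sqrt{b\log n/n})$ as the paper's \cref{e14}, together with the correct observation that $b\log n\to 0$ is what makes it negligible after normalization.

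There is, however, a concrete quantitative error in your first step. The total variation of $x\mapsto K_b(x-t)=K((x-t)/b)$ is \emph{not} $\bigO(1/b)$: rescaling the argument leaves total variation unchanged, so $\sum_i|K_b(t_{i+1}-t)-K_b(t_i-t)|\le \mathrm{TV}(K)=\bigO(1)$ uniformly in $t$ (equivalently, there are $\bigO(nb)$ nonzero increments, each of size $\bigO(1/(nb))$). You appear to have conflated the pointwise derivative bound $|K_b'|=\bigO(1/b)$ with the total variation. The Abel-summation error is therefore $\bigO_{\Pr}\bigl(\tfrac{1}{nb}\cdot n^{2/5}\log n\bigr)=\bigO_{\Pr}\bigl(\tfrac{\log n}{n^{3/5}b}\bigr)$, which is exactly the bound recorded in the paper's proof, not your $\bigO_{\Pr}(\log n/(n^{3/5}b^2))$. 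This matters because your subsequent verification is false as stated: with your bound the normalized error is $\tfrac{\log n}{n^{1/10}b^{3/2}}$, and the hypothesis $\log n/(n^{3/5}b)\to 0$ (i.e.\ $b\gg n^{-3/5}\log n$) does not force this to vanish --- for instance $b=n^{-1/2}$ satisfies the hypothesis yet gives $n^{13/20}\log n\to\infty$. So the chain of implications you assert does not close as written. With the corrected $\bigO(1)$ total-variation bound your argument reduces to the paper's; I would only add that even then, the assertion that $\log n/(n^{3/5}b)\to 0$ is ``precisely'' what renders $\sqrt{nb}\cdot\tfrac{\log n}{n^{3/5}b}=\tfrac{\log n}{n^{1/10}\sqrt{b}}$ of order $o(1/\sqrt{\log n})$ deserves a more careful check than either you or the paper's one-line conclusion provides.
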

\begin{proof} By summation by part and the result of \cref{thm3}, we have 
	\begin{align*}
	\sup_{t\in\mathcal{T}}|D_k(t)-\Xi_k(t)|& \le
	\left|\frac{1}{nb}\sum_{i=1}^n\left[\sum_{j=1}^i[\eps_i^j-\sigma_k(t_j)u_j]K\left(\frac{t_i-t}{b}\right)\right]\right|
	=\bigO_{\Pr}\left(\frac{\log(n)}{n^{3/5}b}\right), 
	\end{align*}
	where $\Xi_k(t)=\frac{1}{nb}\sum_{i=1}^n\sigma_k(t_i)u_iK_{b}(t_i-t)$. Since $\sigma_k(\cdot)$ is Lipschitz continuous, then
	\begin{equation}\label{e14}
	\sup_{t\in \mathcal{T}}\left|\Xi_k(t)-\frac{1}{nb} \sigma_k(t)\sum_{i=1}^nu_i K\left(\frac{t_i-t}{b}\right) \right|=\bigO_{\Pr}\left(\sqrt{\frac{b\log(n)}{n}}\right).
	\end{equation} 
	Hence, \cref{l6} holds following the above equations and the arguments of \cref{l5}.
\end{proof}
\vskip 0.3cm
\textbf{Proof of \cref{thm2}}.
	Recall \cref{e9,e10,e11} and denote $\eta_k(t)=(\beta_k(t),b\beta_k'(t))^\top
	=[Q_n^k(t)]^{-1}R_n^k(t)$, 
	we have
	\begin{equation}\label{e13}
	Q_n^k(t)(\widehat{\eta}_k(t)-\eta_k(t))=\left(
	\begin{array}{c}
	~b^2Q_{n,2}^k(t)\{\beta_k''(t)/2+\bigO(b)\}\\
	b^2Q_{n,3}^k(t)\{\beta_k''(t)/2+\bigO(b)\}\\
	\end{array}\right)
	+\Lambda_n^k(t)+\Theta_n^k(t)+T_n^k(t),
	\end{equation}
	where $T_n^k(t)=(T_{n,0}^k(t),T_{n,1}^k(t))^\top,
	~\Lambda_n^k(t)=(\Lambda_{n,0}^k(t),\Lambda_{n,1}^k(t))^\top,~
	\Theta_n^k(t)=(\Theta_{n,0}^k(t),\Theta_{n,1}^k(t))^\top$ and
	\begin{align*}
	T_{n,l}^k(t)&=\frac{1}{nb}\sum_{i=1}^n\eps_i^k\left(\frac{t_i-t}{b}\right)^lK\left(\frac{t_i-t}{b}\right),\\
	\Lambda_{n,l}^k(t)&=\frac{1}{nb}\sum_{i=1}^n\lambda_i^k\left(\frac{t_i-t}{b}\right)^lK\left(\frac{t_i-t}{b}\right),\\
	\Theta_{n,l}^k(t)&=\frac{1}{nb}\sum_{i=1}^n\theta_i^k\left(\frac{t_i-t}{b}\right)^lK\left(\frac{t_i-t}{b}\right), ~l=0,1.
	\end{align*}
	As a consequence of the weak law of large numbers, we know that
	$Q_n^k(t) \xrightarrow{\mathcal{P}} Q(t)$ in the sense that each element converges in probability,
	where
	$$Q(t)=\left(
	\begin{array}{cc}
	1 & 0\\
	0 & \mu_2
	\end{array}
	\right).$$\label{eq1}
	Hence, from \cref{e13} and the results of \cref{thm1}, we have 
	\begin{align*}
	&\sqrt{nb}\sup_{t\in \mathcal{T}}\left|\widehat{\beta}_{k,b}(t)-\beta_k(t)-
	T_{n,0}^k(t)\right|\\=&
	\bigO\left(n^{1/2}b^{7/2}\right)+\bigO_{\Pr}
	\left(\sqrt{\frac{1}{n^{1-2\alpha-4\beta}b}}\right)
	+\bigO_\Pr\left(\sqrt{\frac{\log(n)}{n^{1-2\alpha-2\beta}b}}\right)
	+\bigO(\sqrt{nb^5\log(n)}) \xrightarrow{\mathcal{P}} 0.
	\end{align*} 
	Furthermore by the Proposition 6 in \cite{Zhou2009}, we have for any $k=1,...,h$,
	$$(nb)^{1/2}T_n^k(t)\Rightarrow N(0,\nu_k(t)),~~
	\nu_k(t)=\sigma_k^2(t)~{\rm diag} (\phi_0,\phi_2).$$ 
	Next, we treat $T_{n,0}^k(t)$ as $D_k(t)$ in \cref{l6}, then \cref{thm2} follows. \qed
	\vskip 0.5cm
	
	\textbf{Proof of \cref{thm1}}.
		Recall that our model contains $d=\bigO(n^\alpha)$ change points with the maximal size $\Delta_n=\bigO(n^\beta)$ on $\mu(\cdot)$ and let $\Omega$ and $\Lambda_k$ be the $n$-dimensional vectors with the entrywises $\omega_n^b(t,i)$ and $(\mu_i-\mu_{i-k})^2$ for $i=1,...,n$, respectively. Due to the fact that $\mu(\cdot)$ is Lipschitz continuous, one can see that $\Lambda_k$ consists of $kd$ components being $\bigO(n^{2\beta})$ and other components being $\bigO(1/n^2)$. For each $k=1,...,h$ and any fixed $t\in[0,1]$, by \cref{e3,e4,e8} we have
		\begin{align*}
		\sup_{t\in\mathcal{T}}\left|\widehat{\beta}_{k,b}(t)
		-\beta_k(t)\right|
		=& \sup_{t\in\mathcal{T}}\left|\sum_{i=1}^n\omega_n^b(t,i)
		(\beta_k(t_i)+\eps_i^k+\lambda_i^k+\theta_i^k)-
		\sum_{i=1}^n\omega_n^b(t,i)\beta_k(t)\right| \\
		=& \sup_{t\in\mathcal{T}}\left|
		\sum_{i=1}^n\omega_n^b(t,i)\beta_k'(t_i)(t-t_i)+
		\sum_{i=1}^n\omega_n^b(t,i)\left[\frac{\beta_k''(t_i)}{2}+
		\bigO(b^2)\right](t-t_i)^2\right. \notag\\
		&{}+\left.\sum_{i=1}^n\omega_n^b(t,i)\eps_i^k
		+\sum_{i=1}^n\omega_n^b(t,i)\lambda_i^k+
		\sum_{i=1}^n\omega_n^b(t,i)\theta_i^k\right| \\
		\le &\sup_{t\in\mathcal{T}}\left|\sum_{i=1}^n
		\omega_n^b(t,i)\left[\frac{\beta_k''(t_i)}{2}+
		\bigO(b^2)\right](t-t_i)^2\right|+\sup_{t\in\mathcal{T}}
		\left|\sum_{i=1}^n\omega_n^b(t,i)\eps_i^k\right| \\
		&{}+\sup_{t\in\mathcal{T}}\left|\sum_{i=1}^n
		\omega_n^b(t,i)\lambda_i^k\right|+\sup_{t\in\mathcal{T}}
		\left|\sum_{i=1}^n\omega_n^b(t,i)\theta_i^k\right|:=I+II+III+IV.
		\end{align*}
		It is obvious that $I=\bigO_{\Pr}(b^2)$. Then we will apply a chaining argument for calculating $II$. For $t\in[0,1]$, define the sampling time points as $t=s/n,~s=0,1,...,n$ and let $\Pi_t=\sum_{i=1}^n\omega_n^b(t,i)\epsilon_i^k$. Then for each time point $s/n$, we have
		\begin{align*}
		\max_{0\le s\le n}\left|\Pi_{\frac{s}{n}}\right|&=
		\max_{0\le s\le n}\left|\sum_{i=1}^n
		\omega_n^b\left(\frac{s}{n},i\right)\eps_i^k\right|\\
		&\le \max_{0\le s\le n}\left|\sum_{i=1}^n
		\omega_n^b\left(\frac{s}{n},i\right)\sigma_k
		\left(\frac{s}{n}\right)u_i\right|
		+\bigO_{\Pr}\left(\frac{n^{2/5}\log n}{nb}\right)\\
		&=\bigO_{\Pr}\left(\sqrt{\frac{\log n}{nb}}\right)+
		\bigO_{\Pr}\left(\frac{\log n}{n^{3/5}b}\right) .
		\end{align*}
		The first inequality uses \cref{thm3} and the triangle inequality. Next, we consider the difference between $\Pi_t$ and $\Pi_{\frac{s}{n}}$.
		\begin{align*}
		\sup_{t\in [s/n,(s+1)/n]}
		\left|\Pi_t-\Pi_{\frac{s}{n}}\right|&=
		\sup_{t\in[s/n,(s+1)/n]}\left|
		\sum_{i=1}^n\left[\omega_n^b(t,i)-
		\omega_n^b\left(\frac{s}{n},i\right)\right]
		\sigma_k\left(\frac{s}{n}\right)\eps_i^k\right|\\
		&=\sup_{t\in[s/n,(s+1)/n]}\int_{\frac{s}{n}}
		^{t}\left|\sum_{i=1}^n
		{\omega_n^b}'(v,i)\sigma_k\left(\frac{s}{n}\right)
		\epsilon_i^k\right| \dee v\\
		&\le \int_{\frac{s}{n}}^{\frac{s+1}{n}}\left|\sum_{i=1}^n
		\omega_n^b{'(v,i)}\sigma_k\left(\frac{s}{n}\right)
		\epsilon_i^k\right| \dee v\\
		&=\left|\sum_{i=1}^n\left|\omega_n^b\left(\frac{s+1}{n},i\right)-
		\omega_n^b\left(\frac{s}{n},i\right)\right|
		\sigma_k\left(\frac{s}{n}\right)\epsilon_i^k\right|\\
		&=\bigO_{\Pr}\left(\sqrt{\frac{\log n}{n}}\right)+
		\bigO_{\Pr}\left(\frac{\log n}{n^{3/5}}\right).
		\end{align*}
		Thus, we have
		\begin{align*}
		II=\sup_{t\in\mathcal{T}}\left|\Pi_t\right|&=
		\sup_{t\in\mathcal{T}}\left|\Pi_t-\Pi_{\frac{s}{n}}+
		\Pi_{\frac{s}{n}}\right|\\
		&\le \max_{0\le s\le n}\sup_{t\in [s/n,(s+1)/n]}
		\left|\Pi_t-\Pi_{\frac{s}{n}}\right|+\max_{0\le s\le n}\left|\Pi_{\frac{s}{n}}\right|\\
		&=\bigO_{\Pr}\left(\sqrt{\frac{\log n}{nb}}\right)+
		\bigO_{\Pr}\left(\frac{\log n}{n^{3/5}b}\right).
		\end{align*}
		As for $III$, notice that
		\begin{equation} \label{eq3}
		\left|\sum_{i=1}^n\omega_n^b(t,i)\lambda_i^k\right|= \left|\langle\Omega, \Lambda_k\rangle\right|
		\le \Vert\Omega\Vert_2 \Vert\Lambda_k\Vert_2=
		\bigO_{\Pr}\left(\sqrt{\frac{1}{n^{1-\alpha-4\beta}b}}\right).
		\end{equation}
		In the end, by the similar chaining argument as those in the proof of $I$, we have $$IV=\bigO_{\Pr}\left(\sqrt{\frac{\log n}{n^{1-2\beta}b}}\right)+
		\bigO_{\Pr}\left(\frac{\log n}{n^{3/5-\beta}b}\right).$$ Note that the assumption $\alpha+2\beta<2/5$ entails $\beta<1/5$, therefore by elementary calculation, the above four kinds of bounds all converge to 0 as $n\to \infty$. \qed
\vskip 0.5cm

\textbf{Proof of \cref{thm4}}.
Similar to the proof of Theorem 4 in \cite{ZW2010}. \qed
\vskip 0.5cm

\textbf{Proof of \cref{thm5}}.	
Let $\mathcal{I}'$ be a closed interval in $(0,1)$ such that $\mathcal{I}\subset\mathcal{I}'$ and the two intervals do not share common end points. Recall $\widetilde{\beta}_k(t)=(\beta_h(t),\beta_k(t))^\top$ and denote $\widehat{\beta}_{k,b_k}(t)=(\widehat{\beta}_{h,b_k}(t),\widehat{\beta}_{k,b_k}(t))^\top$. According to \cref{thm1}, it follows that
\begin{equation}\label{e16}
\sup_{i/n\in \mathcal{I}'}\left|\hat{\eps}_i-\tilde{\eps}_i\right|=
\sup_{i/n\in \mathcal{I}'}\left|\widetilde{\beta}_k(t_i)-\widehat{\beta}_{k,b_k}(t_i)\right|=
\bigO_{\Pr}(\chi_n).
\end{equation}
Note that $Q_i^k/(2m+1)$ is the Nadaraya-Waston smoother of the series $\{\tilde{\eps}_i\}$ at $i$ with the rectangle kernel and bandwidth $m/n$. Therefore, for each $k=1,...,h$, we have 
\begin{equation}\label{e17}
\sup_{i/n \in \mathcal{I}'}|Q_i^k|=\bigO_{\Pr}(\sqrt{m\log n}).
\end{equation}
Let $\widehat{Q}_i^k=\sum_{j=-m}^{m}\hat{\eps}_{i+j}$ and $\widehat{N}_i^k=\widehat{Q}_i^k\widehat{Q}_i^{k\top}/(2m+1)$. Then
$$(2m+1)(N_i^k-\widehat{N}_i^k)=(Q_i^k-\widehat{Q}_i^k)
(Q_i^k)^\top+\widehat{Q}_i^k(Q_i^k-\widehat{Q}_i^k)^\top.$$
Substituting equations (\ref{e16}) and (\ref{e17}) into the above equation, we have $\sup_{i/n\in \mathcal{I}'}|N_i^k
-\widehat{N}_i^k|=\bigO_{\Pr}(\nu_n)$ with the assumption $\nu_n\to 0$. By the definitions of $\widetilde{\Sigma}_k(t)$ and $\widehat{\Sigma}_k(t)$, we obtain 
$$\sup_{i/n\in \mathcal{I}'}\left|\widetilde{\Sigma}_k(t)-
\widehat{\Sigma}_k(t)\right|=\bigO_{\Pr}(\nu_n).$$ 
Together with the results of \cref{thm4}, \cref{thm5} holds. \qed

\vskip 0.5cm

\textbf{Proof of \cref{prop1}}.
This proposition follows by \cref{thm2} and \cref{e14} from the proof of \cref{l6}. \qed

\bibliography{second}

\begin{thebibliography}{10}

\bibitem{Brockwell2016}
P.~J. Brockwell and R.~A. Davis.
\newblock {\em Introduction to Time Series and Forecasting}.
\newblock Springer, 2016.

\bibitem{brown2007variance}
L.~D. Brown and M.~Levine.
\newblock Variance estimation in nonparametric regression via the difference
  sequence method.
\newblock {\em The Annals of Statistics}, 35(5):2219--2232, 2007.

\bibitem{cai2009variance}
T.~T. Cai, M.~Levine, and L.~Wang.
\newblock Variance function estimation in multivariate nonparametric regression
  with fixed design.
\newblock {\em Journal of Multivariate Analysis}, 100(1):126--136, 2009.

\bibitem{Chakar.etal.16}
S.~Chakar, E.~Lebarbier, C.~L{\'e}vy-Leduc, and S.~Robin.
\newblock A robust approach for estimating change-points in the mean of an
  {AR}(1) process.
\newblock {\em Bernoulli}, 23(2):1408--1447, 2017.

\bibitem{Craven1978}
P.~Craven and G.~Wahba.
\newblock Smoothing noisy data with spline functions.
\newblock {\em Numerische mathematik}, 31(4):377--403, 1978.

\bibitem{Dahlhaus1997}
R.~Dahlhaus.
\newblock Fitting time series models to nonstationary processes.
\newblock {\em The Annals of Statistics}, 25:1--37, 1997.

\bibitem{dai2017choice}
W.~Dai, T.~Tong, and L.~Zhu.
\newblock On the choice of difference sequence in a unified framework for
  variance estimation in nonparametric regression.
\newblock {\em Statistical Science}, 32(3):455--468, 2017.

\bibitem{dette2011optimal}
H.~Dette, P.~Preu{\ss}, and M.~Vetter.
\newblock A measure of stationarity in locally stationary processes with
  applications to testing.
\newblock {\em Journal of the {A}merican {S}tatistical {A}ssociation},
  106:1113--1124, 2010.

\bibitem{Dette2019}
H.~Dette, W.~Wu, and Z.~Zhou.
\newblock Change point analysis of correlation in non-stationary time series.
\newblock {\em Statistica Sinica}, 29(2):611--643, 2019.

\bibitem{Politis1999}
J.~P.~R. Dimitris N.~Politis and M.~Wolf.
\newblock {\em Subsampling}.
\newblock New York: Springer, 1999.

\bibitem{Ding2019}
X.~Ding and Z.~Zhou.
\newblock Estimation and inference for precision matrices of non-stationary
  time series.
\newblock {\em The Annals of Statistics}, 2019.

\bibitem{Dwivedi2011}
Y.~Dwivedi and S.~S. Rao.
\newblock A test for second-order stationarity of a time series based on the
  discrete fourier transform.
\newblock {\em Journal of Time Series Analysis}, 32:68--91, 2011.

\bibitem{eubank1990testing}
R.~L. Eubank and C.~H. Spiegelman.
\newblock Testing the goodness of fit of a linear model via nonparametric
  regression techniques.
\newblock {\em Journal of the American Statistical Association},
  85(410):387--392, 1990.

\bibitem{fan1994gijbels}
J.~Fan and I.~Gijbels.
\newblock Local polynomial modelling and its applications.
\newblock {\em London: Chapman and Hall}, 1994.

\bibitem{gasser1991flexible}
T.~Gasser, A.~Kneip, and W.~K{\"o}hler.
\newblock A flexible and fast method for automatic smoothing.
\newblock {\em Journal of the {A}merican {S}tatistical {A}ssociation},
  86(415):643--652, 1991.

\bibitem{hall2003using}
P.~Hall and I.~V. Keilegom.
\newblock Using difference-based methods for inference in nonparametric
  regression with time series errors.
\newblock {\em Journal of the Royal Statistical Society: Series B (Statistical
  Methodology)}, 65(2):443--456, 2003.

\bibitem{Hamilton1994}
J.~D. Hamilton.
\newblock {\em Time Series Analysis}.
\newblock Princeton University Press, 1994.

\bibitem{hardle1997local}
W.~H{\"a}rdle and A.~Tsybakov.
\newblock Local polynomial estimators of the volatility function in
  nonparametric autoregression.
\newblock {\em Journal of Econometrics}, 81(1):223--242, 1997.

\bibitem{herrmann1992choice}
E.~Herrmann, T.~Gasser, and A.~Kneip.
\newblock Choice of bandwidth for kernel regression when residuals are
  correlated.
\newblock {\em Biometrika}, 79(4):783--795, 1992.

\bibitem{muller1987estimation}
H.-G. M{\"u}ller and U.~Stadtm{\"u}ller.
\newblock Estimation of heteroscedasticity in regression analysis.
\newblock {\em The Annals of Statistics}, 15(2):610--625, 1987.

\bibitem{hans1988detecting}
H.-G. M{\"u}ller and U.~Stadtm{\"u}ller.
\newblock Detecting dependencies in smooth regression models.
\newblock {\em Biometrika}, 75(4):639--650, 1988.

\bibitem{muller1993variance}
H.-G. M{\"u}ller and U.~Stadtm{\"u}ller.
\newblock On variance function estimation with quadratic forms.
\newblock {\em Journal of Statistical Planning and Inference}, 35(2):213--231,
  1993.

\bibitem{Nason2013}
G.~Nason.
\newblock A test for second-order stationarity and approximate confidence
  intervals for localized autocovariances for locally stationary time series.
\newblock {\em Journal of the Royal Statistical Society. Series B (Statistical
  Methodology)}, 75:879--904, 2013.

\bibitem{Nason2000}
G.~P. Nason, R.~von Sachs, and G.~Kroisandt.
\newblock Wavelet processes and adaptive estimation of the evolutionary wavelet
  spectrum.
\newblock {\em Journal of the Royal Statistical Society. Series B. Statistical
  Methodology}, 62:271--292, 2000.

\bibitem{Papatoditis2010}
E.~Paparoditis.
\newblock Validating stationarity assumptions in time series analysis by
  rolling local periodograms.
\newblock {\em Journal of the American Statistical Association},
  105(490):839--851, 2010.

\bibitem{park2006simple}
B.~U. Park, Y.~K. Lee, T.~Y. Kim, and C.~Park.
\newblock A simple estimator of error correlation in non-parametric regression
  models.
\newblock {\em Scandinavian Journal of Statistics}, 33(3):451--462, 2006.

\bibitem{rao2004multiple}
S.~S. Rao.
\newblock On multiple regression models with nonstationary correlated errors.
\newblock {\em Biometrika}, 91(3):645--659, 2004.

\bibitem{rao2004nonstationary}
T.~S. Rao and E.~Tsolaki.
\newblock Nonstationary time series analysis of monthly global temperature
  anomalies.
\newblock In {\em Time Series Analysis and Applications to Geophysical
  Systems}, pages 73--103. Springer, 2004.

\bibitem{rice1984bandwidth}
J.~Rice et~al.
\newblock Bandwidth choice for nonparametric regression.
\newblock {\em The Annals of Statistics}, 12(4):1215--1230, 1984.

\bibitem{tecuapetla2017autocovariance}
I.~Tecuapetla-G{\'o}mez and A.~Munk.
\newblock Autocovariance estimation in regression with a discontinuous signal
  and $m$-dependent errors: A difference-based approach.
\newblock {\em Scandinavian Journal of Statistics}, 44(2):346--368, 2017.

\bibitem{wang2008effect}
L.~Wang, L.~D. Brown, T.~T. Cai, and M.~Levine.
\newblock Effect of mean on variance function estimation in nonparametric
  regression.
\newblock {\em The Annals of Statistics}, 36(2):646--664, 2008.

\bibitem{Wu2005}
W.~B. Wu.
\newblock Nonlinear system theory: another look at dependence.
\newblock {\em Proceedings of the National Academy of Sciences of the United
  States of America}, 102(40):14150--14154, 2005.

\bibitem{WZ2011}
W.~B. Wu and Z.~Zhou.
\newblock Gaussian approximations for non-stationary multiple time series.
\newblock {\em Statistica Sinica}, 21(3):1397--1413, 2011.

\bibitem{zhou2015optimal}
Y.~Zhou, Y.~Cheng, L.~Wang, and T.~Tong.
\newblock Optimal difference-based variance estimation in heteroscedastic
  nonparametric regression.
\newblock {\em Statistica Sinica}, 25:1377--1397, 2015.

\bibitem{Zhou2010}
Z.~Zhou.
\newblock Nonparametric inference of quantile curves for nonstationary time
  series.
\newblock {\em The Annals of Statistics}, 38(4):2187--2217, 2010.

\bibitem{Zhou2013}
Z.~Zhou.
\newblock Heteroscedasticity and autocorrelation robust structural change
  detection.
\newblock {\em Journal of the American Statistical Association}, 108:726--740,
  2013.

\bibitem{Zhou2009}
Z.~Zhou and W.~B. Wu.
\newblock Local linear quantile estimation for nonstationary time series.
\newblock {\em The Annals of Statistics}, 37(5B):2696--2729, 2009.

\bibitem{ZW2010}
Z.~Zhou and W.~B. Wu.
\newblock Simultaneous inference of linear models with time varying
  coefficients.
\newblock {\em Journal of the Royal Statistical Society. Series B (Statistical
  Methodology).}, 72(4):513--531, 2010.

\end{thebibliography}

\end{document}